\newtheorem{theorem}{Theorem}[section]
\newtheorem{lemma} [theorem]{Lemma}
\theoremstyle{remark}
\newtheorem{remark}[theorem]{Remark}
\newtheorem{example}[theorem]{Example}
\newcommand{\R}{\mathbb{R}}
\newcommand{\N}{\mathbb{N}}
\newcommand{\x}{\mathbf{x}}
\newcommand{\y}{\mathbf{y}}
\newcommand{\I}{\mathcal{K}}
\newcommand{\bigO}{O}
\title{Rectangular polar quadrature in 1D and its error analysis}
\author{Krishna Yamanappa Poojara \footnote{Computing and Mathematical Sciences, Caltech, Pasadena, CA 91125, USA, kyp6174@caltech.edu}, \and Sabhrant Sachan\footnote{Computing and Mathematical Sciences, Caltech, Pasadena, CA 91125, USA, ssachan@caltech.edu}, \and Ambuj Pandey \footnote{Indian Institute of Science Education and Research Bhopal (IISER Bhopal), ambuj@iiserb.ac.in}}
\date{}
\begin{document}
	\maketitle
	\begin{center}
		Preliminary draft
	\end{center}
	\begin{abstract}
%

This paper presents a one-dimensional analog of the Rectangular-Polar (RP) integration strategy \cite{bruno2020chebyshev} and its convergence analysis for weakly singular convolution integrals. The key idea of this method is to break the whole integral into integral over non-overlapping patches (subdomains) and achieve convergence by increasing the number of patches while approximating the integral on patches accurately using a fixed number of quadrature points. The non-singular integrals are approximated to high-order using Fej\'er first quadrature, and a specialized integration strategy is designed and incorporated for singular integrals where the kernel singularity is resolved by mean of Polynomial Change of Variable (PCV).  We prove that for high-order convergence, it is essential to compute integral weights accurately, and the method's convergence rate depends critically on the degree of the PCV and the singularity of the integral kernel. Specifically, for kernels of the form $|x-y|^{-\alpha}(\alpha>-1)$, the method achieves high-order convergence if and only if $p(1-\alpha) \in \mathbb{N}$. This relationship, first observed numerically in \cite{bruno_sachan2024numerical}, highlights the importance of choosing an appropriate degree $p$ for the PCV. 
A new error estimate in a framework where convergence is achieved by reducing the approximating domain size while keeping the number of discretization points fixed is derived for the Fej\'er first quadrature, decay rate of Chebyshev coefficients, and the error in approximating continuous Chebyshev coefficients with discrete ones.
Numerical experiments corroborate the theoretical findings, showcasing the effectiveness of the RP strategy for accurately solving singular integral equations. As an application of the method, numerical solutions of the surface scattering problem in the two dimensions are computed for complex domains, and the algorithm's efficacy is demonstrated for large-scale problems.
\end{abstract}
	
	\section{Introduction}

Recently, an efficient high-order Rectangular-Polar (RP) integration strategy was introduced in \cite{bruno2020chebyshev} for the approximation of weakly singular convolution integrals which appear in the context of surface scattering problems in three dimensions. In \cite{bruno2020chebyshev}, the authors demonstrated the spectral accuracy of the RP integration strategy through several numerical experiments and simulated scattering phenomena on complex geometries relevant to real-world applications. Due to its ability to handle singular kernels with high-order accuracy, even for complex geometric domains, the RP strategy has been successfully applied to various practical problems in science and engineering \cite{zhang2022hyper,ng2023acoustic,garza2022fast,faria2021general,hu2021chebyshev}. The hallmark of the RP integration scheme's high accuracy lies in its design and the specialized quadrature rules it employs for the evaluation of singular integrals. These rules leverage a Polynomial Change of Variable (PCV), which clusters nearly half of the integration nodes near the singular points, thereby regularizing the kernel. While the high-order convergence of the RP quadrature scheme has been demonstrated in numerous applications \cite{hu2021chebyshev,bruno2024direct,ng2023acoustic}, rigorous convergence analysis of the method is, to the best of our knowledge, not yet reported in the literature. In this paper, we present a rigorous quadrature analysis of a one-dimensional analog of the RP integration scheme for the evaluation of convolution integral operators of the form:
\begin{equation}\label{conv_eqn}
	\I[u](x) := \int_{\Omega} g_{\alpha}\left(|x-y| \right) u(y) \, dy, \quad x \in \Omega,
\end{equation}
where $\Omega$ is a closed, bounded interval in $\mathbb{R}$, and $u$ denotes an integral density assumed to belong to the function space $X^m(\Omega) = C^m(\Omega) \cap C_{\text{pw}}^{m+2}(\Omega)$, implying $u$ is $m$-times continuously differentiable with piecewise continuous $(m+2)$-th derivatives. The kernel $g_{\alpha}\left(|x-y|\right)$ takes the form:
\begin{equation} \label{eq_kernel}
	g_{\alpha}\left(|x-y|\right) :=
	\begin{cases}
		|x-y|^{-\alpha}, & 0 < \alpha < 1, \\
		\log|x-y|, & \alpha = 0.
	\end{cases}
\end{equation}

This analysis is critical for selecting key algorithmic parameters that influence the high-order accuracy of the methodology and its extension to higher dimensions. For example, in \cite{bruno_sachan2024numerical}, the authors computationally observed that, unlike the logarithmic kernel, the RP strategy does not exhibit high-order convergence for kernels of the form $ |x-y|^{-\alpha} $ for all polynomial degrees in the PCV. Specifically, they noted that for kernels of the form $ g_{\alpha}(x, y) = |x-y|^{-\alpha} $ with $ 0 < \alpha < 1 $, the convergence rate depends on the degree $p$ of the polynomial in the PCV, achieving high-order convergence only for specific values of $p$. Empirical evidence suggests that high-order convergence occurs when $ p(1-\alpha) \in \mathbb{N} $. This raises an important question: \textit{for a given $\alpha$ and density regularity $m$, what values of $p$ ensure high-order convergence of the RP quadrature}? We answer this question (see \Cref{error}, \Cref{Thm_quad_error_1d}, and \Cref{Thm_Patches_Varying}) and rigorously prove that the RP method achieves high-order convergence if and only if $ p(1-\alpha) \in \mathbb{N} $. In such cases, the precise order of convergence is $ m+2-\alpha $. When $ p(1-\alpha) \notin \mathbb{N} $, the convergence order is limited to $\min\{m+2-\alpha, 2p(1-\alpha)\}$. For infinitely smooth $u$, the scheme achieves exponential convergence if $ p(1-\alpha) \in \mathbb{N} $, while otherwise, the convergence rate is restricted to $ 2p(1-\alpha) $. These theoretical results are corroborated by extensive computational experiments, showing close agreement between theoretical and computational convergence rates.

To improve computational efficiency, the RP strategy \cite{bruno2020chebyshev} divides the integration domain into non-overlapping subdomains (patches), approximating the integral on each patch with a high-order quadrature rule using a fixed number of nodes. High-order convergence is achieved by increasing the number of patches while keeping the number of nodes on each patch constant. However, a rigorous error analysis of this approach requires results on the following for functions $f \in X^m[a, b]$ as a function of the interval length $h = b-a$:
\begin{enumerate}
	\item Error in Fej\'{e}r first quadrature.
	\item Decay rate of Chebyshev coefficients of $f$.
	\item Truncation error in approximating $f$ by its Chebyshev series.
	\item Error in approximating continuous Chebyshev coefficients with discrete coefficients.
\end{enumerate}
While these results are well-established for fixed $h$ as $n \to \infty$, they are not explicitly available for the case where convergence is achieved by shrinking $h$ while keeping $n$ fixed. In this paper, we derive explicit error estimates for both cases, making our results applicable to various computational techniques where convergence is achieved by domain decomposition, such as $hp$-Finite Element and Spectral Element Methods. As mentioned in \cite[cf. Section 5.1]{bruno2020chebyshev}, our analysis also demonstrates the necessity of accurate singular integral weights for high-order convergence. If singular weights are computed inaccurately, the asymptotic convergence rate drops to $O(h)$ for logarithmic kernels and $O(h^{1-\alpha})$ for $\alpha$-kernels. However, numerical experiments indicate that 24–32 nodes per patch with appropriate $p$ are sufficient for accurate weight computation, mitigating this issue. Finally, we make an important observation regarding singularity treatment. Breaking the singular integral at the singularity point improves the convergence rate by two orders compared to treating it without subdivision. This insight underscores the importance of specialized singularity handling in achieving high accuracy with the RP strategy. 

The proposed one-dimensional RP-Integration scheme offers additional computational advantages over a higher-dimensional version. For instance, unlike the three-dimensional approach \cite{bruno2020chebyshev} where singular integral weights require $O(P_{\text{near}}N)$ memory, in the one dimension, it needs only $O(1)$ memory for the weight storage, where $N$ is the number of unknowns and $P_{\text{near}}$ is the number of near singular patch. 	Thus, the proposed one-dimensional RP integration scheme can efficiently evaluate convolution integrals of the form \ref{conv_eqn} with high-order accuracy. 
For the N-point discretization problem, our convolution computation needs $O(N^2)$ operations. However, this process can be significantly sped up using existing fast summation methods such as the Fast Multipole Method \cite{martinsson2007accelerated}, and the sum of exponential approximation \cite{zhang2021fast} to name a few.

The	proposed method can be straightforwardly adopted as a Nystr\"{o}m solver for an accurate numerical solution of integral equations in one dimension. For instance, in \cite{bruno2024direct} the RP strategy is incorporated for the solution of the second kind Fredholm equation over the boundary of square domain. Moreover, the applicability of the RP strategy can be easily extended for the boundary of complicated domains and to illustrate this, in \Cref{sec:scattering}, we have implemented the proposed method for the solution of two-dimensional surface scattering by acoustic waves. We have simulated the aforementioned scattering problem by employing the proposed method to the second kind equivalent integral equation and the resulting linear system is solved iteratively by means of the generalized minimal residual method (GMRES). Unlike the Finite Element and Finite Difference method, our solver is dispersion less and capable of solving a large-scale problem accurately.

The rest of the paper is organized as follows. In \Cref{method} we present the details of the proposed quadrature which comprises of decomposition of the domain into non-overlapping patches, treatment of regular, singular, and near-singular integrals, and the implementation of the change of variable. \Cref{section_decay_of_coefficient} presents new error bounds on Chebyshev coefficients related to Chebyshev polynomials of the first kind in terms of varying patch length and index of the coefficient, from which we have also derived a novel error bound on Fej\'er quadrature. \Cref{error} presents the error analysis of the proposed numerical integration method.  \Cref{numerics} demonstrates a variety of numerical examples, which align with the error analysis.  To demonstrate the applicability of our method, in \Cref{sec:scattering} we have also included simulation results for the two-dimensional surface scattering problems. We concluded the paper with comments on possible directions for future work.

\section{Methodology}\label{method}
	
	In this section, we describe the proposed integration scheme for an accurate evaluation of integral operator 
	\begin{equation}\label{conv_eqn2}
		\I[u](x) := \int\limits_{\Omega} g_{\alpha}\left(|x-y | \right)u(y) dy,  \ \text{for} \  x\in \Omega,
	\end{equation}
	where  $\Omega$ is a closed bounded interval in $\mathbb{R}$, and the integral density  $u \in X^m(\Omega), m\ge 0$. We also assume that the integral kernel $g_{\alpha}\left(|x-y |\right)$ is a weakly singular, that is, $g_{\alpha}(|x-y|)$ is continuous for all $x,y \in \Omega$, $x \ne y$, and there exist a positive constant $C$ such that
	\begin{equation} \label{weak_kernel}
		\left|g_{\alpha}\left(|x-y |\right)\right| \le C|x-y|^{-\alpha},\quad x,y \in \Omega, \, x \ne y, 
	\end{equation}
	for $0<\alpha<1$. Owing to the kernel singularity in equation \eqref{conv_eqn2} a straightforward application of the high-order quadrature rule exhibits low order convergence, and hence a specialized quadrature rule must be designed and used if high-order accuracy is desired. In order to deal with the singular character of the kernel efficiently, we utilize a strategy based on local parametrization that we explain in what follows. To this end, we start by representing  $\Omega$ as a union of $P$ patches namely $\Omega_{1}, \Omega_{2},...,\Omega_{P}$
	such that 
	\begin{equation}
    \label{patches_condition}
    \Omega = \bigcup\limits_{\ell=1}^{P} \overline{\Omega}_{\ell},\  \text{ and } \Omega_{\ell}\cap \Omega_{q} =\emptyset \text{ for } \ell\neq q,
    \end{equation}
    where
	each $\overline{\Omega}_{\ell}$ (closure of $\Omega_{\ell}$) is an image of the closed set $[-1,1]$ via a smooth invertible  parametrization $\xi_{\ell}$. Using this patching decomposition and parametrization, integral \eqref{conv_eqn2} can be re-written as 
	\begin{equation}\label{conv_patch_eqn}
		\I[u](x) = \sum\limits_{\ell=1}^{P} \I_{\ell}[u](x) , 
	\end{equation}
	where 
	\begin{equation}\label{int_on_patch}
		\I_{\ell}[u](x) = \int\limits_{-1}^{1} g_{\alpha}(|x-\xi_{\ell}(t)|) u(\xi_{\ell}(t)) J_{\ell}(t) dt,
	\end{equation}	
    and $J_{\ell}(t)$ denote the Jacobian of parametrization $\xi_{\ell}$. Note that a high-order approximation of integral 	\eqref{int_on_patch} depends on the position of target point $x$ with respect to the integration patch $\Omega_{\ell}$.  For instance, if the target point $x$ lies in the integration patch $\Omega_{\ell}$ or very close to $\Omega_{\ell}$ then the integrand in \eqref{int_on_patch} becomes singular or near singular respectively.  Thus, depending upon the location of $x$ relative to the integration patch $\Omega_{\ell}$, we partition the set of integral in \eqref{conv_patch_eqn} into three different classes, namely, regular integral, singular integral and near singular integral. \medskip
	
	\noindent
	In the case of regular integral, target point $x$ is sufficiently away from the integration patch $\Omega_{\ell}$, so the integrand in \eqref{int_on_patch} becomes smooth (as the integral kernel is smooth),  and high-order accuracy can be achieved by utilizing any of the classical high-order quadrature rules. For instance, in our implementations, to evaluate the integral \eqref{int_on_patch} we have used the FF-Rule, discussed in \cite{davis2014methods} and achieved high-order convergence. To be precise, if $x$ is sufficiently away from $\Omega_\ell$, then the discrete operator 
	
	\begin{equation}\label{eq_regular_int}
		\mathcal{K}^{n}_{\ell,\textrm{reg}}[u](x)=\sum\limits_{i=0}^{n-1} w_{i} g_{\alpha}(|x-\xi_{\ell}(t_{i})|)u(\xi_{\ell}(t_{i}))J_{\ell}(t_{i}),			
	\end{equation}
yields high-order approximation to the equation \eqref{int_on_patch}, where $t_{i} = \cos{\left(\pi\frac{2i+1}{2n}\right)}$ denotes the open Chebyshev nodes in the interval $[-1,1]$ and ${w}_{i}$ denotes the weights for the FF-Rule \cite{davis2014methods}, given by
\begin{align}\label{Chebyshev_weights}
	w_{i} = \frac{2}{n} \left(1-2 \sum\limits_{k=0}^{\lfloor \frac{n}{2} \rfloor} \frac{\cos(k v_{2i+1} )}{4k^{2}-1} \right),\,\, v_{2i+1} = \frac{(2i+1)\pi}{n}.
\end{align}
We discuss the convergence of FF-Rule in \Cref{theorem_quad_convergence}, which illustrates order of convergence of this quadrature in terms of the decay rate of  Chebyshev coefficients of the function. The central focus of our presentation lies in the treatment of singular integral and it's convergence analysis, which arises when the target point $x$ lies in the integration patch $\Omega_{\ell}$. To circumvent the kernel singularity in $g_\alpha$ at $x=y$, we utilize local Chebyshev expansion of the integral density and a local change of variable that we describe in what follows. The truncated Chebyshev expansion of integral density $\phi_{\ell}(t) = u(\xi_{\ell}(t))J_{\ell}(t)$ in the patch $\Omega_\ell$ can be obtained as
\begin{equation} \label{trunc_cheby_expan}
	\phi_{\ell}(t) \approx \sum\limits_{k=0}^{n-1} \tilde{c}_{k}^{\ell}T_{k}(t),
\end{equation}
where $T_{k}$ denotes the Chebyshev polynomial of the first kind of degree $k$ \cite{mason2002chebyshev}, and $\tilde{c}_{k}^{\ell}$ is the discrete Chebyshev coefficient of $\phi_\ell$ that can be computed as
\begin{equation}
	\tilde{c}_{k}^{\ell} = \frac{\gamma_{k}}{n}\sum\limits_{i=0}^{n-1} \phi_{\ell}(t_{i}) T_{k} (t_i),  ~ \text{with}~\gamma_{k} = \left\{
	\begin{array}{ll}
			1, & \mbox{if } k=0 \\
			2, & \mbox{otherwise}.
	\end{array}
    \right.
\end{equation}
Note that, the coefficients $\tilde{c}_{k}^{\ell}$, for all $k=0,1,\cdots,n-1,$ and for all $\ell=1,2,\cdots,P,$ can be computed in $O(nP \log n)$ operations using the FFT.  Now using discrete Chebyshev series (\ref{trunc_cheby_expan}), integral \eqref{int_on_patch} can be approximated as 
\begin{equation}\label{eq_Cheby_expn_operator}
\I_{\ell}[u](x) \approx \I_{\ell}^{n}[u](x) = \sum\limits_{k=0}^{n-1} \tilde{c}_{k}^{\ell}\beta_{k}^{\ell}(x), \text{ where }\beta_{k}^{\ell}(x) = \int\limits_{-1}^{1} g_{\alpha}(|x-\xi_{\ell}(t)|)~T_{k}(t) ~dt. 
\end{equation}
Note that the integrand in $\beta_{k}^{\ell}(x)$ is singular at $x=\xi_{\ell}(t)$ and hence special care is required for it's accurate approximation. To compute $\beta_{k}^{\ell}(x)$ with high degree of precision, we have employed a change of variable technique similar to those utilized in \cite{bruno2020chebyshev}. Therefore, we now employ the change of variable $\psi_{p}$ which smoothens the integrand at the edges as discussed in \cite{bruno2020chebyshev, kress1990nystrom} to handle the singularity in the integral $\beta_{k}^{\ell}(x)$, which is defined  below for $p \ge 2$,
\begin{equation}\label{w_p(t)}
	\psi_{p}(t) = 2 \frac{\left[v_{p}(t)\right]^{p}}{\left[v_{p}(t)\right]^{p} + \left[v_{p}(- t)\right]^{p}},
\end{equation}
and
\begin{equation}\label{v_p(t)}
	v_{p}(t) =
	\left(\frac{1}{2} - \frac{1}{p} \right) t^{3} + \frac{t}{p} + \frac{1}{2}, ~-1 \leq t \leq 1.
\end{equation}
There are two approaches to apply this change of variable: one to overcome the singularity within the interior of the domain and the other to address the singularity at the edges. Upon implementing both approaches we observed a peculiar behavior in the convergence order, which we demonstrate in the table below.
\begin{figure}[!htb]
	\centering
	\hspace{1.5cm}\begin{minipage}{.3\linewidth}
		\centering
		\includegraphics[width=1.\linewidth]{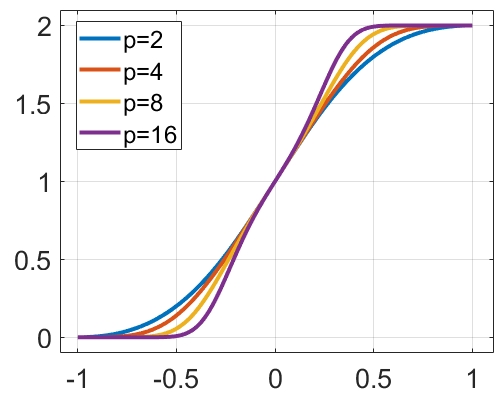}	
			\label{Plot_wp(t)}
		\end{minipage}\hfill
	\begin{minipage}{0.6\linewidth}
		\centering
		\scalebox{.8}{
		\begin{tabular}{c| c| c| c| c}
			\hline \hline 
			\multirow{2}{*}{n} & 
			\multicolumn{2}{c|}{boundary} & 
					\multicolumn{2}{c}{interior} \\
					\cline{2-5}
					& error & order & error & order \\ 
					\hline
					4 & 2.46e-02 & - & 2.75e-01 & -\\
					8 & 2.98e-05 & 9.69 & 1.33e-02 & 4.37\\
					16 & 1.65e-07 & 7.50 & 3.02e-05 & 8.78\\
					32 & 7.27e-14 & 21.1 & 5.58e-08 & 9.08\\
					64 & - & - & 7.10e-10 & 6.30\\
					128 & - & - & 9.28e-12 & 6.26\\
					256 & - & - & 1.19e-13 & 6.29\\
					512 & - &  & 3.00e-15 & 5.31\\
					1024 & - & - & 2.00e-16 & 3.91\\
					\hline \hline
			\end{tabular} } 
		\end{minipage}
		\caption{The change of variable function $\psi_{p}$ is plotted for various values of $p$, we can see the higher order derivatives of $\psi_{p}$ vanishes at the edges as $p$ increases. The table in right demonstrates behavior of the polynomial change of variable \cite[equation (40)]{bruno2020chebyshev}, which is implemented for the function $|t|^{-0.1}$ on $[-1,1]$ at the singularity $t=0$ on boundary and interior with $p=7$. We refer \cite[Theorem 2]{kutz1984asymptotic} for theoretical justification.}
	\end{figure}
	\noindent
As illustrated above, the change of variable method addressing the singularity at the edges shows superior convergence compared to the one focusing on the interior singularity. Consequently, we split the integral in equation \eqref{eq_Cheby_expn_operator} at the singular point $t_{x} \coloneqq \xi_{\ell}^{-1}(x)$ into two parts: $\beta_{k}^{\ell}(x) = \beta_{k,L}^{\ell}(x) + \beta_{k,R}^{\ell}(x)$, where
\begin{equation}\label{eq_weights_betaLR}
\beta_{k,L}^{\ell}(x)=\int\limits_{-1}^{t_{x}} g_{\alpha}(|x-\xi_{\ell}(t)|) T_{k}(t) dt \text{, and } \beta_{k,R}^{\ell}(x)=\int\limits_{t_x}^{1} g_{\alpha}(|x-\xi_{\ell}(t)|) T_{k}(t) d t.    
\end{equation}			
After utilizing the change of variable \eqref{w_p(t)}, we approximate $\beta_{k,L}^{\ell}$, $\beta_{k,R}^{\ell}$ by FF-Rule, and denote the approximations as $\tilde{\beta}_{k,L}^{\ell}$ and $\tilde{\beta}_{k,R}^{\ell}$ respectively. Finally, we conclude our singular integration scheme by introducing the singular integral operator $\I_{\ell,\textrm{sing}}^{n}[u](x)$, 
\begin{equation}\label{sing_quad}
	\I_{\ell}^{n}[u](x) \approx \I_{\ell,\textrm{sing}}^{n}[u](x) = \sum\limits_{k=0}^{n-1} \tilde{c}_{k}^{\ell}(\phi_{\ell})\tilde{\beta}_{k}^{\ell}(x), \quad \tilde{\beta}_{k}^{\ell} = \tilde{\beta}_{k,L}^{\ell}+\tilde{\beta}_{k,R}^{\ell}.
\end{equation} 	
The only case which remains to be dealt with is near singular integration, in which the target point is close to the integration patch such that a direct application of the FF-Rule exhibits poor convergence due to the presence of the weakly singular kernel. Although the kernel remains finite in this case, it can blow up to significantly larger values, creating numerical challenges. We follow two fold strategy to overcome the near singularity phenomenon. First, we  project $t_{x}$ (see \eqref{eq_weights_betaLR}) to the nearest edge of the parametric space corresponding to the integration patch. Then, we follow the singular integration scheme discussed above for the projected point.

 \section{Chebyshev Coefficients: Decay with Interval Size}\label{section_decay_of_coefficient}
 Throughout this paper, the symbol $C$ stands for a positive constant
taking on different values on different occurrences.
In this section, we will discuss the decay of the Chebyshev coefficients which incorporates length of the interval as it play a major role in deriving convergence rates of our methodology. In literature, many authors have discussed the convergence of Chebyshev coefficients \cite{xiang2015error,xiang2010error,xiang2013convergence,trefethen2008gauss}, however, in best of our knowledge, the decay rate in terms of the length of the interval are not reported. We now investigate the convergence rates of coefficients with size of the patch and index. Throughout this section, let $\Omega = [a,b]$, $h=b-a$ and define $c_k$ as the Chebyshev coefficients of the function $\tilde{u}$, where 
\begin{equation}\label{eq_ul_def}
\tilde{u}(t)=u\left(\xi(t)\right),\ \xi(t) = \frac{h}{2}t+\frac{a+b}{2}, \ -1\le t\le 1.
\end{equation} 
The function $\tilde{u}$ is simply a re-parametrization of $u$ on the interval $\Omega$ to $[-1,1]$.  We aim to estimate $c_{k}$ in terms of the patch length $h$ and coefficient index $k$, to this end, we begin with a preliminary lemma showing the behavior of the coefficients as $h$ tends to zero.
\begin{lemma}
     Let $u$ be a continuous function on $\Omega$, $x_0 = \frac{a+b}{2}$ and $k \geq 1$. If $a,b$ are bounded as $h$ tends to zero, then the Chebyshev coefficients $c_k$ of $\tilde{u}$ satisfy
    \begin{equation*}
    c_0\rightarrow u\left(x_0\right), \,\, c_k \rightarrow 0, \text { as } h \rightarrow 0.
    \end{equation*}
\end{lemma}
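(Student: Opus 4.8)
The plan is to pass to the trigonometric form of the continuous Chebyshev coefficients and exploit the fact that, as $h\to 0$, the re-parametrized density $\tilde u$ becomes uniformly close to the constant $u(x_0)$. With the standard normalization (the one consistent with the discrete coefficients used in the paper, so that a constant function $f\equiv c$ has zeroth coefficient equal to $c$), substituting $t=\cos\theta$ gives $c_0=\frac1\pi\int_0^\pi \tilde u(\cos\theta)\,d\theta$ and $c_k=\frac2\pi\int_0^\pi \tilde u(\cos\theta)\cos(k\theta)\,d\theta$ for $k\ge 1$. Since $\frac1\pi\int_0^\pi d\theta=1$ and $\int_0^\pi\cos(k\theta)\,d\theta=0$ for $k\ge1$, I can subtract the constant $u(x_0)$ under the integral sign to obtain
\[
c_0-u(x_0)=\frac1\pi\int_0^\pi\big(u(\xi(\cos\theta))-u(x_0)\big)\,d\theta,\qquad
c_k=\frac2\pi\int_0^\pi\big(u(\xi(\cos\theta))-u(x_0)\big)\cos(k\theta)\,d\theta .
\]

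The second step is to bound the integrands uniformly in $\theta$. From $\xi(t)=\frac h2 t+x_0$ one has $\xi(\cos\theta)-x_0=\frac h2\cos\theta$, so $|\xi(\cos\theta)-x_0|\le \frac h2$ for every $\theta$. Because the intervals $[a,b]$ remain inside a fixed bounded set as $h\to0$ (this is exactly the hypothesis that $a,b$ stay bounded), the function $u$ is uniformly continuous on that set; let $\omega$ denote its modulus of continuity there. Then $|u(\xi(\cos\theta))-u(x_0)|\le\omega(h/2)$ uniformly in $\theta$, and bounding $|\cos(k\theta)|\le 1$ in the integrals above yields $|c_0-u(x_0)|\le\omega(h/2)$ and $|c_k|\le 2\,\omega(h/2)$ for all $k\ge1$. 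Since $\omega(h/2)\to0$ as $h\to0$, the conclusion follows, with the estimate on $c_k$ in fact uniform in $k$.

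There is essentially no obstacle in this argument; the only point requiring care is the interpretation of ``$a,b$ bounded as $h\to0$'', which I use precisely to place all the sample points $\xi(\cos\theta)$ in a common compact set on which $u$ is uniformly continuous — pointwise continuity of $u$ alone would not suffice, since the midpoint $x_0$ is itself allowed to vary with $h$. (If instead $x_0$ is held fixed, the dominated convergence theorem gives the same conclusion, using $\tilde u(\cos\theta)\to u(x_0)$ pointwise together with the local boundedness of $u$.) This lemma then serves as the $h\to0$ baseline for the quantitative decay estimates of $c_k$ in $h$ and $k$ developed in the remainder of the section.
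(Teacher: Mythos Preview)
Your argument is correct and in fact slightly sharper than the paper's. The paper simply writes $c_k=\frac{2}{\pi}\int_0^\pi \tilde u(\cos\theta)\cos(k\theta)\,d\theta$, invokes the Dominated Convergence Theorem to pass the limit inside, and uses $\int_0^\pi\cos(k\theta)\,d\theta=0$; the case $k=0$ is handled ``similarly''. Your approach instead subtracts $u(x_0)$ explicitly and bounds the difference by the modulus of continuity of $u$ on a fixed compact set, which is more elementary (no DCT needed) and yields the quantitative, $k$-uniform estimate $|c_k|\le 2\,\omega(h/2)$. You are also more careful than the paper about the hypothesis that $a,b$ stay bounded while $x_0$ may move: the paper's one-line DCT application tacitly assumes a dominating function and a well-defined pointwise limit, both of which really rest on the uniform-continuity observation you make explicit. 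Either route suffices for the lemma; yours gives a cleaner bridge to the later decay estimates.
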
  
\begin{proof}
For $k \geq 1$,
\[
c_k=\frac{2}{\pi} \int_{-1}^1 \tilde{u}(t) \frac{T_k(t)}{\sqrt{1-t^2}} d t=\frac{2}{\pi} \int_0^\pi \tilde{u}(\cos \theta) \cos (k \theta) d \theta .\]
Using Dominated convergence theorem,
\[\lim _{h \rightarrow 0} c_k=\frac{2}{\pi} \int_0^\pi \lim _{h \rightarrow 0} \tilde{u}(\cos \theta) \cos (k \theta) d \theta=\frac{2}{\pi}u(x_0) \int_0^\pi \cos (k \theta) d \theta=0 .\]
Similarly, the result follows for $k=0$. 
\end{proof}
\noindent
To understand how fast Chebyshev coefficients $c_k$ converge to zero for $k\ge 1$, we use integration by parts. We obtain
	\begin{equation}\label{Eq_Cheby_Coeff_intPart_one}       
		c_k = \frac{1}{\pi}\int_{-\pi}^{\pi}\tilde{u}(\cos{\theta})\cos(k\theta)d\theta = \frac{h}{2k}\frac{1}{\pi}\int_{-\pi}^{\pi}\tilde{u}'(\cos{\theta})\sin(\theta)\sin{(k\theta)}d\theta.
	\end{equation}
	To simplify calculations, define 
	\begin{equation}\label{Ck_int_by_parts_term}
		I_{M}[\tilde{u}^{(i)},j,k] = \int_{-\pi}^{\pi} \tilde{u}^{(i)}(\cos{\theta})\sin^{j}(\theta)\cos^{i-j}(\theta)\zeta_{M}(k\theta)d\theta,  
   \end{equation}
	where $i,M$ are non-negative integers, and $j$ is an integer such that $j\le i$. If $j<0$, then define $I_{M}[\tilde{u}^{(i)},j,k] = 0$. Here $M$ corresponds to the number of times integration by parts is performed and $\zeta_M(\theta) = \cos{\theta}$ if $M$ is even, else $\zeta_M(\theta) = \sin{\theta}$; $\tilde{u}^{\scriptscriptstyle (i)}$ corresponds to $i$-th derivative of $\tilde{u}$, where $i$ depends on the regularity of $\tilde{u}$. Before formulating an expression for the coefficients $c_{k}$, we describe a recursive relation which plays a crucial rule in obtaining a bound for $c_k$. 
\begin{lemma}\label{lemma_expression_Ck_induction}
If $u\in X^{m}(\Omega)$ then for integers $i$, $j$, $M$ with $0\leq j \le i\leq m$, $M\ge 0$ and $k\in \N$, $I_{M}[\tilde{u}^{(i)},j,k]$ satisfies the following recursive relation
    \begin{equation}\label{eq_rec_reln}
		I_{M}[\tilde{u}^{(i)},j,k] =  \frac{(-1)^M}{k}\left(-jI_{M+1}[\tilde{u}^{(i)},j-1,k]+(i-j)I_{M+1}[\tilde{u}^{(i)},j+1,k]+ \frac{h}{2}I_{M+1}[\tilde{u}^{(i+1)},j+1,k]\right). 
	\end{equation} 
    Additionally, if $u\in C^{m+2}(\Omega)$ then equation \eqref{eq_rec_reln} holds for $0\leq j
    \leq i\leq m+1$.
\end{lemma}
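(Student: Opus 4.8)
The plan is to derive \eqref{eq_rec_reln} from a single integration by parts in the variable $\theta$ applied to the integral \eqref{Ck_int_by_parts_term} defining $I_{M}[\tilde u^{(i)},j,k]$: integrate the oscillatory factor $\zeta_{M}(k\theta)$ and differentiate the remaining product $f(\theta):=\tilde u^{(i)}(\cos\theta)\,\sin^{j}(\theta)\,\cos^{i-j}(\theta)$. An antiderivative of $\zeta_{M}(k\theta)$ is $\tfrac{(-1)^{M}}{k}\zeta_{M+1}(k\theta)$, which one checks by the two parity cases ($\int\cos(k\theta)\,d\theta=\tfrac1k\sin(k\theta)$ when $M$ is even, $\int\sin(k\theta)\,d\theta=-\tfrac1k\cos(k\theta)$ when $M$ is odd). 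This is exactly the source of the prefactor $(-1)^{M}/k$ in \eqref{eq_rec_reln}, and it simultaneously raises the index of $\zeta$ from $M$ to $M+1$ in every term on the right-hand side.

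Next I would differentiate $f$ by the product rule and match each resulting term to an $I_{M+1}$ template. Differentiating the factor $\tilde u^{(i)}(\cos\theta)$ with respect to $\theta$ produces $\tfrac h2\,\tilde u^{(i+1)}(\cos\theta)\cdot(-\sin\theta)$, where the constant $\tfrac h2$ is the Jacobian $\xi'$ of the affine re-parametrization \eqref{eq_ul_def} and the derivative order rises by one; combining the new $\sin\theta$ with $\sin^{j}\theta$ and using $\cos^{i-j}\theta=\cos^{(i+1)-(j+1)}\theta$, this term is $-\tfrac h2$ times the integrand of $I_{M+1}[\tilde u^{(i+1)},j+1,k]$. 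Differentiating $\sin^{j}\theta$ gives $j\sin^{j-1}\theta\cos\theta$, and since $\cos^{i-j}\theta\cdot\cos\theta=\cos^{\,i-(j-1)}\theta$ this contributes $j$ times the integrand of $I_{M+1}[\tilde u^{(i)},j-1,k]$ (consistent with the convention $I_{M+1}[\,\cdot\,,-1,k]=0$, since for $j=0$ the term literally disappears because $\tfrac{d}{d\theta}\sin^{0}\theta=0$). Differentiating $\cos^{i-j}\theta$ gives $-(i-j)\cos^{i-j-1}\theta\sin\theta$, i.e. $-(i-j)$ times the integrand of $I_{M+1}[\tilde u^{(i)},j+1,k]$. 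Collecting the three pieces and multiplying by the overall factor $-\tfrac{(-1)^{M}}{k}$ coming from the integration by parts yields precisely \eqref{eq_rec_reln}.

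It then remains to dispose of the boundary term $\bigl[f(\theta)\tfrac{(-1)^{M}}{k}\zeta_{M+1}(k\theta)\bigr]_{-\pi}^{\pi}$ and to justify the integration by parts under the stated regularity. For the endpoints, $f(\pm\pi)=\tilde u^{(i)}(-1)\,\sin^{j}(\pm\pi)\cos^{i-j}(\pm\pi)$ takes the same value at $\pi$ and $-\pi$ — it vanishes when $j\ge 1$ and equals $\tilde u^{(i)}(-1)(-1)^{i}$ when $j=0$ — while $\zeta_{M+1}(k\pi)=\zeta_{M+1}(-k\pi)$ for every $k\in\N$ (either $\sin(\pm k\pi)=0$, or $\cos$ is even), so the two endpoint contributions cancel. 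For the validity of the integration by parts one only needs $f$ to be continuous and piecewise $C^{1}$ on $[-\pi,\pi]$: for $0\le i\le m$ the hypothesis $u\in X^{m}(\Omega)$ makes $\tilde u^{(i)}$ continuous and piecewise $C^{1}$ (indeed piecewise $C^{2}$), hence so is $f$, the identity holds with the almost-everywhere derivative, and the right-hand side is meaningful because each integrand there is bounded and piecewise continuous, hence integrable. When in addition $u\in C^{m+2}(\Omega)$, the function $\tilde u^{(m+1)}$ is itself $C^{1}$, so the same computation applies with $i=m+1$, which is the asserted extension of the range to $0\le j\le i\le m+1$.

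The computation is essentially mechanical, and I expect no deep obstacle. The two places that demand care are the bookkeeping of exponents — making sure the three product-rule terms align exactly with the templates $I_{M+1}[\tilde u^{(i)},j\mp1,k]$ and $I_{M+1}[\tilde u^{(i+1)},j+1,k]$, including the degenerate slot $j=0$ which is read through the convention $I_{M+1}[\,\cdot\,,-1,k]=0$ — and the sign/parity tracking of $\zeta_{M}$ against $\zeta_{M+1}$, which is what produces the $(-1)^{M}$. The one genuine subtlety is verifying that no jump contribution is introduced at interior breakpoints or at the endpoints when only piecewise-$C^{m+2}$ regularity is available, and that the right-hand integral remains well defined once $i$ is pushed up to $m+1$ (respectively $m+2$ in the $C^{m+2}$ case).
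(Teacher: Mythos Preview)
Your proposal is correct and follows essentially the same approach as the paper: one integration by parts in $\theta$, using the antiderivative $\int\zeta_M(k\theta)\,d\theta=\tfrac{(-1)^M}{k}\zeta_{M+1}(k\theta)$, then matching the three product-rule terms of $f'(\theta)$ to the $I_{M+1}$ templates. Your treatment of the low-regularity case $i=m$ is slightly more streamlined than the paper's---you invoke the general validity of integration by parts for continuous, piecewise $C^1$ functions, whereas the paper explicitly partitions $[-\pi,\pi]$ at the finitely many breakpoints of $\tilde u^{(m+1)}$, applies integration by parts on each subinterval, and uses continuity of $\tilde u^{(m)}$ to telescope the interior boundary contributions---but these are equivalent arguments, and your explicit verification of the endpoint cancellation at $\pm\pi$ actually fills a step the paper leaves implicit.
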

\begin{proof}
Using integration by parts, and the relation 
    \begin{equation*}
        \int \zeta_M(k\theta)d\theta = \frac{(-1)^M}{k}\zeta_{M+1}(k\theta),
    \end{equation*}
  for $0\leq i \leq m-1$, $I_{M}[\tilde{u}^{(i)},j,k]$ can be written as
 \begin{align} 
    I_{M}[\tilde{u}^{(i)},j,k] &= \int_{-\pi}^{\pi} \tilde{u}^{(i)}(\cos{\theta})\sin^{j}(\theta)\cos^{i-j}(\theta)\zeta_{M}(k\theta)d\theta \label{int_by_parts_step}\\
    &= \frac{(-1)^{M}}{k} \frac{h}{2}\int_{-\pi}^{\pi} \tilde{u}^{(i+1)}(\cos{\theta})\sin^{j+1}(\theta)\cos^{i-j}(\theta) \zeta_{M+1}(k\theta)d\theta \nonumber\\
    &\quad +\frac{(-1)^{M+1} j}{k}\int_{-\pi}^{\pi}\tilde{u}^{(i)}(\cos{\theta})\sin^{j-1}(\theta)\cos^{i-(j-1)}(\theta)  \zeta_{M+1}(k\theta)d\theta \nonumber\\
    &\quad +\frac{(-1)^{M}}{k}(i-j)\int_{-\pi}^{\pi} \tilde{u}^{(i)}(\cos{\theta})\sin^{j+1}(\theta)\cos^{i-(j+1)}(\theta) \zeta_{M+1}(k\theta)d\theta. 
\end{align}
Hence, for $u\in X^{m}(\Omega)$ we have the following recurrence relation for $1 \le i \leq m-1$, 
\begin{equation}\label{eq_rec_reln_m-1}
I_{M}[\tilde{u}^{(i)},j,k] =  \frac{(-1)^M}{k}\left(-jI_{M+1}[\tilde{u}^{(i)},j-1,k]+(i-j)I_{M+1}[\tilde{u}^{(i)},j+1,k]+ \frac{h}{2}I_{M+1}[\tilde{u}^{(i+1)},j+1,k]\right). 
\end{equation} 
\noindent
Since $u^{(m)}$ is not  differentiable in $\Omega$ and $\tilde{u}(\cos{(\cdot)})\in C^{m+2}_{\mathrm{pw}}[-\pi,\pi]$, there exist discontinuities $\{s_{\ell}\}_{\ell=0}^{n_d}$ of $(m+1)$-th derivative of $\tilde{u}(\cos{\theta})$ for some $n_{d}\in \N$ such that $[-\pi,\pi] = \cup_{\ell=1}^{n_d} [s_{\ell-1},s_\ell]$. Therefore, $\tilde{u}(\cos{(\cdot)})\in C^{m+2}(s_{\ell-1},s_{\ell})$ for each $\ell$. Using this decomposition, an application of integration by parts one more time gives
     \begin{align}
     I_M[\tilde{u}^{(m)},j,k] &= \int_{-\pi}^{\pi} \tilde{u}^{(m)}(\cos{\theta})\sin^{j}(\theta)\cos^{i-j}(\theta)\zeta_{m}(k\theta)d\theta \nonumber\\
     &= \frac{(-1)^m}{k}\sum_{\ell=1}^{n_d}\left[ \left(\tilde{u}^{(m)}(\cos{\theta})\sin^{j}(\theta)\cos^{i-j}(\theta) \zeta_{m+1}(k\theta)\right)_{s_{\ell-1}}^{s_\ell} \right. \nonumber\\
     &\left.\hspace{1cm} -\int_{s_{\ell-1}}^{s_{\ell}} \left(\tilde{u}^{(m)}(\cos{\theta})\sin^{j}(\theta)\cos^{i-j}(\theta)\right)'\zeta_{m+1}(k\theta)d\theta \right]. \label{intbyparts_discont}
    \end{align}
    Using continuity of $u^{(m)}$, the first term in \eqref{intbyparts_discont} is zero. Since the discontinuities of $\tilde{u}^{(m+1)}$ are finitely many, the second term in \eqref{intbyparts_discont} can be simplified as 
    \begin{align*}
        \sum_{\ell=1}^{n_d} \int_{s_{\ell-1}}^{s_{\ell}} \left(\tilde{u}^{(m)}(\cos{\theta})\sin^{j}(\theta)\cos^{i-j}(\theta)\right)'\zeta_{m+1}(k\theta)d\theta =  \int_{-\pi}^{\pi} \left(\tilde{u}^{(m)}(\cos{\theta})\sin^{j}(\theta)\cos^{i-j}(\theta)\right)'\zeta_{m+1}(k\theta)d\theta.
    \end{align*}
    This implies that we obtained the same recursive relation in this case. Additionally, if $u\in C^{m+2}(\Omega)$, it is easy to see that equation \eqref{eq_rec_reln_m-1} holds even for $0\leq i\leq m+1$ as the step (\ref{int_by_parts_step}) is valid.
\end{proof}
\noindent
Now, using \eqref{Ck_int_by_parts_term}, $c_{k}$ in \eqref{Eq_Cheby_Coeff_intPart_one} can be re-expressed as $c_{k} = \frac{1}{2\pi} \frac{h}{k} I_{1}[\tilde{u}^{(1)},1,k]$. Performing integration by parts iteratively $M$ times provides the following lemma.
	\begin{lemma}\label{lemma_Cheby_Coeff_form}
		Let $u \in X^m(\Omega)$ and $1\leq M\leq m+1$, $k \in \mathbb{N}$, then
		\begin{equation}\label{eq_Cheby_Coeff_form}
			c_k = \sum_{i=1}^{M} \sum_{j=0}^{i} \frac{h^{i}}{k^M}\alpha_{ij}^{(M)}  I_{M}[\tilde{u}^{(i)},j,k],
		\end{equation}
		where $\alpha_{ij}^{(M)}$ are constants defined recursively and independent of $k$ and $h$, that is, for $0\le j \le i$, $1\le i \le M$, 
		\begin{gather*}
			\alpha_{ij}^{(M)} = \alpha_{1,ij}^{(M)}+\alpha_{2,ij}^{(M)}+\alpha_{3,ij}^{(M)},  \text{ where } \alpha_{1,ij}^{(M)}=\begin{cases}  
				0 , \,\, \text{if }i=M \text{ or } j=i\\
				(-1)^{M} (j+1)\alpha_{i(j+1)}^{(M-1)} , \text{ otherwise, }\end{cases}\\ \, \\                 
			\alpha_{2,ij}^{(M)}= \begin{cases}  
				0 , \,\,\text{if } i=M \text{ or } j=0 \\ 
				(-1)^{M} (i-j+1)\alpha_{i(j-1)}^{(M-1)}  , \text{ otherwise, }\end{cases} \alpha_{3,ij}^{(M)}=\begin{cases}  
				0 , \,\,\text{if } i=1 \text{ or } j=0 \\ 
				\frac{(-1)^{M-1}}{2}\alpha_{(i-1)(j-1)}^{M-1}  , \text{ otherwise, }\end{cases}
		\end{gather*}
	and $\alpha_{10}^{(1)}=0,\alpha_{11}^{(1)} = \frac{1}{2\pi}$. Additionally, $I_{m+1}[\tilde{u}^{(m+1)},j,k]=O\left(\frac{1}{k}\right)$, and if $u \in C^{m+2}(\Omega)$ then \eqref{eq_Cheby_Coeff_form} holds for $1 \le M \le m+2$.
\end{lemma}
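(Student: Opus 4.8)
The plan is to prove the formula \eqref{eq_Cheby_Coeff_form} by induction on $M$, using \Cref{lemma_expression_Ck_induction} as the engine of the inductive step, and then to handle the two ``additionally'' claims separately. For the base case $M=1$, the computation already recorded just before the lemma statement, namely $c_k = \frac{1}{2\pi}\frac{h}{k} I_1[\tilde u^{(1)},1,k]$, together with the stipulated values $\alpha_{10}^{(1)}=0$, $\alpha_{11}^{(1)}=\frac{1}{2\pi}$, is exactly \eqref{eq_Cheby_Coeff_form} with $M=1$ (the $j=0$ term vanishes). For the inductive step, assume \eqref{eq_Cheby_Coeff_form} holds for some $M$ with $1\le M\le m$. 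Apply \Cref{lemma_expression_Ck_induction} to each term $I_M[\tilde u^{(i)},j,k]$ appearing on the right-hand side, replacing it by the three-term combination of $I_{M+1}[\tilde u^{(i)},j-1,k]$, $I_{M+1}[\tilde u^{(i)},j+1,k]$, and $\tfrac{h}{2}I_{M+1}[\tilde u^{(i+1)},j+1,k]$, each divided by $k$. Since the recursion requires $j\le i\le m$ (or $i\le m+1$ in the smoother case), the hypothesis $M\le m$ guarantees all indices $i$ that occur satisfy $i\le M\le m$, so the recursion is licensed; the degenerate convention $I_M[\cdot,j,k]=0$ for $j<0$, and the vanishing cases built into $\alpha_{1,ij}$, $\alpha_{2,ij}$, $\alpha_{3,ij}$ (when $j=i$, $j=0$, or $i=1$ so that an out-of-range index would be created), absorb all the boundary effects.

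The heart of the step is then a bookkeeping computation: after substitution, collect the coefficient of each $I_{M+1}[\tilde u^{(i)},j,k]$. A term $I_{M+1}[\tilde u^{(i)},j,k]$ can arise in three ways — from the ``$-j I_{M+1}[\tilde u^{(i)},j-1,k]$'' branch applied to the old index $(i,j+1)$, contributing $(-1)^{M+1}(j+1)\alpha_{i(j+1)}^{(M)}$; from the ``$(i-j)I_{M+1}[\tilde u^{(i)},j+1,k]$'' branch applied to $(i,j-1)$, contributing $(-1)^{M+1}(i-j+1)\alpha_{i(j-1)}^{(M)}$; and from the ``$\tfrac h2 I_{M+1}[\tilde u^{(i+1)},j+1,k]$'' branch applied to the old index $(i-1,j-1)$, contributing $\tfrac{(-1)^{M}}{2}\alpha_{(i-1)(j-1)}^{(M)}$ together with one extra power of $h$. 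Matching the powers of $h$ and of $1/k$ (one extra $1/k$ from the recursion turns $k^{-M}$ into $k^{-(M+1)}$, and the extra $h$ in the third branch turns $h^{i-1}$ into $h^{i}$) against the claimed form shows that the new coefficients are precisely $\alpha_{1,ij}^{(M+1)}+\alpha_{2,ij}^{(M+1)}+\alpha_{3,ij}^{(M+1)}$ as defined in the statement. I expect this index-matching — keeping straight which old $(i,j)$ feeds which new $(i,j)$ and verifying that the three defining cases for $\alpha_{1}$, $\alpha_{2}$, $\alpha_{3}$ exactly mirror the three branches and their exclusion conditions — to be the main (if routine) obstacle; it is easy to slip a sign or shift an index.

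For the final assertions: when $u\in C^{m+2}(\Omega)$, \Cref{lemma_expression_Ck_induction} is valid for $i$ up to $m+1$, so the same induction runs one step further and \eqref{eq_Cheby_Coeff_form} holds through $M=m+2$. It remains to show $I_{m+1}[\tilde u^{(m+1)},j,k]=O(1/k)$ in the merely piecewise-smooth case. Here $\tilde u^{(m+1)}$ need not be differentiable, but it is piecewise $C^1$ with finitely many jump points $\{s_\ell\}$, so one more integration by parts on each subinterval $[s_{\ell-1},s_\ell]$ — exactly as in the proof of \Cref{lemma_expression_Ck_induction}, equation \eqref{intbyparts_discont} — produces a factor $1/k$ times boundary terms evaluated at the $s_\ell$ plus an integral of the (bounded, piecewise continuous) derivative against $\zeta_{m+2}(k\theta)$. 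Since $\tilde u^{(m+1)}$ itself is only piecewise continuous the boundary terms need not cancel, but they are $O(1)$ in $k$, and the remaining integral is bounded by $\tfrac{1}{k}$ times an $h$-dependent constant; hence $I_{m+1}[\tilde u^{(m+1)},j,k]=O(1/k)$, with a constant that is uniform once $h$ ranges over a bounded set. This completes all parts of the lemma.
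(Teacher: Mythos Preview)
Your proposal is correct and follows essentially the same route as the paper: induction on $M$ with \Cref{lemma_expression_Ck_induction} driving the inductive step, followed by a piecewise integration by parts (exactly as in \eqref{intbyparts_discont}) to obtain the $O(1/k)$ bound, and one extra inductive step in the $C^{m+2}$ case. Your write-up is actually more explicit than the paper's about how the three branches of the recursion feed the three pieces $\alpha_{1}$, $\alpha_{2}$, $\alpha_{3}$; the paper simply says ``after re-indexing, $c_k$ becomes \ldots'' and leaves that bookkeeping to the reader. (One caution you already flagged: the sign on the $\alpha_{2}$ contribution is easy to get wrong---the $(i-j)$ branch carries a $+$ sign in \eqref{eq_rec_reln}, so applied at old index $(i,j-1)$ it yields $(-1)^{M}(i-j+1)\alpha_{i(j-1)}^{(M)}$, not $(-1)^{M+1}$; reconcile this carefully with the statement when you write it out.)
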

\begin{proof}
We prove this lemma using induction on $M$. When $M=1$, the lemma is true since $c_{k} = \frac{1}{2\pi} \frac{h}{k} I_{1}[\tilde{u}^{(1)},1,k]$ as obtained from integration by parts. By induction hypothesis, assume that the result is true for any natural number $M\le m$. Using \Cref{lemma_expression_Ck_induction}, the coefficient $c_{k}$ can be expressed as
	\begin{align}
		c_k &=  \sum_{i=1}^{M} \sum_{j=0}^{i} \frac{h^{i}}{k^{M}}\alpha_{ij}^{(M)} \left[ \frac{(-1)^{M+1} j}{k}I_{M+1}[\tilde{u}^{(i)},j-1,k] + \frac{(-1)^{M} (i-j)}{k}I_{M+1}[\tilde{u}^{(i)},j+1,k] \right. \label{eq_rec_relation_m+2}\\ 
  &\quad + \left. \frac{(-1)^N}{2}\frac{h}{k}I_{M+1}[\tilde{u}^{(i+1)},j+1,k]\right]. \nonumber
	\end{align}
After re-indexing, $c_k$ becomes
    \begin{equation*}
         c_k= \sum_{i=1}^{M+1} \sum_{j=0}^{i} \frac{h^{i}}{k^{M+1}}\alpha_{ij}^{(M+1)} I_{M+1}[\tilde{u}^{(i)},j,k].
    \end{equation*}
 Thus, equation \eqref{eq_Cheby_Coeff_form} is true for $1\le M\le m+1$. This completes the proof by induction for $u\in X^{m}$. Additionally, since $\tilde{u}(\cos{(\cdot)})\in C^{m+2}_{\mathrm{pw}}[-\pi,\pi]$, there exist discontinuities of $(m+1)$-th derivative of $\tilde{u}(\cos{\theta})$ at $\{s_\ell\}_{\ell=0}^{n_d}$ for some $n_{d}\in \N$ such that $[-\pi,\pi] = \cup_{\ell=1}^{n_d} [s_{\ell-1},s_\ell]$. Therefore,
  \begin{align*} I_{m+1}[\tilde{u}^{(m+1)},j,k] &= \frac{(-1)^{m+1}}{k}\sum_{\ell=1}^{n_d}\left[ \left(\tilde{u}^{(m+1)}(\cos{\theta})\sin^{j}(\theta)\cos^{i-j}(\theta) \zeta_{m+2}(k\theta)\right)_{s_{\ell-1}}^{s_\ell} \right. \nonumber\\
     &\quad \left. -\int_{s_{\ell-1}}^{s_{\ell}} \left(\tilde{u}^{(m+1)}(\cos{\theta})\sin^{j}(\theta)\cos^{i-j}(\theta)\right)'\zeta_{m+2}(k\theta)d\theta \right] = O\left(\frac{1}{k}\right).
    \end{align*} 
    This completes the proof for case $u\in X^{m+2}(\Omega)$. If $u\in C^{m+2}(\Omega)$ then another application of recursive relation \Cref{lemma_expression_Ck_induction} proves \eqref{eq_Cheby_Coeff_form}  for $M=m+2$.
\end{proof}

While estimating the decay rate of $c_{k}$ in terms of $k$ and $h$, using \Cref{lemma_Cheby_Coeff_form} we have achieved the optimal decay in terms of $k$ and now, we would like to estimate the optimal bound in terms of $h$. To do so, we re-write the integral \eqref{Ck_int_by_parts_term} as a function of $\mu$ below and investigate it's behavior as $\mu \to 0$. 
\begin{equation}\label{Eq_expression_gijkl}
	I_{M}[\tilde{u}^{(i)},j,k](\mu) = \int_{-\pi}^{\pi} \tilde{u}^{(i)}\left(\frac{\mu}{h}\cos{\theta}\right)\sin^{j}(\theta)\cos^{i-j}(\theta)\zeta_{M}(k\theta)d\theta,
\end{equation}
where $\mu\in [-h, h]$. Observe that $I_{M}[\tilde{u}^{(i)},j,k](h) = I_{M}[\tilde{u}^{(i)},j,k]$. In the following lemma we estimate the bound for \eqref{Eq_expression_gijkl}. 
\begin{lemma}\label{lemma_Cheby_Coeff_Taylor_thm}
For integers $0 \leq j\leq i \leq m$ with $i<k$ and $0\leq M$ the following estimates hold:  
    \begin{enumerate}
    \item[i.] If $u \in X^{m}(\Omega)$ then 
	\begin{equation}\label{uXm}
		I_M[\tilde{u}^{(i)},j,k](\mu) = O\left(\mu^{\min\{k, m+1\}-i}\right).
	\end{equation}
    \item[ii.] If $u \in C^{m+2}(\Omega)$ then 
	\begin{equation}\label{uCm}
		I_M[\tilde{u}^{(i)},j,k](\mu) = O\left(\mu^{\min\{k, m+2\}-i}\right),  
    \end{equation}
    \end{enumerate}
    where $\mu \in [-h, h] \subseteq [-1,1]$.
\end{lemma}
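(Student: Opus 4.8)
The plan is to Taylor-expand in powers of $\mu$, about $\mu=0$, the only $\mu$-dependent factor $\tilde{u}^{(i)}\!\left(\tfrac{\mu}{h}\cos\theta\right)$ appearing in \eqref{Eq_expression_gijkl}, to notice that each monomial $\mu^{\ell}$ of this expansion gets integrated against $\sin^{j}\theta\,\cos^{i-j+\ell}\theta\,\zeta_M(k\theta)$ — which vanishes as soon as $i+\ell<k$ — and then to bound the Taylor remainder using boundedness on $[-1,1]$ of the highest available derivative of $\tilde{u}$. As a first step I would fix $\theta$, write $g_\theta(\mu):=\tilde{u}^{(i)}\!\left(\tfrac{\mu}{h}\cos\theta\right)$, and record that $g_\theta^{(\ell)}(\mu)=\left(\tfrac{\cos\theta}{h}\right)^{\ell}\tilde{u}^{(i+\ell)}\!\left(\tfrac{\mu}{h}\cos\theta\right)$ wherever the right side makes sense; since $\mu\in[-h,h]$ and $|\cos\theta|\le1$, all arguments $\tfrac{\mu}{h}\cos\theta$ (and $\tfrac{s}{h}\cos\theta$, $s$ between $0$ and $\mu$) stay in $[-1,1]$, so there is no difficulty evaluating $\tilde u$ and its derivatives there.

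Next I would set the expansion order $L:=\min\{k,m+1\}-i$ in part~i and $L:=\min\{k,m+2\}-i$ in part~ii; then $L\ge1$ because $i<k$ and $i\le m$, and $i+L\le m+1$ (part~i), $i+L\le m+2$ (part~ii). Taylor's theorem with the integral remainder applies: $\tilde{u}^{(i+L-1)}$ is absolutely continuous on $[-1,1]$ — for $i+L-1\le m-1$ it is even $C^1$, for $i+L-1=m$ it is Lipschitz since $u\in C^{m+2}_{\mathrm{pw}}$ makes $\tilde{u}^{(m)}$ piecewise $C^2$ and globally continuous, and in part~ii it is genuinely $C^1$. This gives
\[
g_\theta(\mu)=\sum_{\ell=0}^{L-1}\frac{g_\theta^{(\ell)}(0)}{\ell!}\,\mu^{\ell}+\frac{1}{(L-1)!}\int_0^{\mu}(\mu-s)^{L-1}g_\theta^{(L)}(s)\,ds ,
\]
with $g_\theta^{(\ell)}(0)=\left(\tfrac{\cos\theta}{h}\right)^{\ell}\tilde{u}^{(i+\ell)}(0)$ and $g_\theta^{(L)}(s)=\left(\tfrac{\cos\theta}{h}\right)^{L}\tilde{u}^{(i+L)}\!\left(\tfrac{s}{h}\cos\theta\right)$, where $\tilde{u}^{(i+L)}$ is continuous on $[-1,1]$ except in the single boundary case $k=m+1$ of part~i, in which it is defined off a finite set and merely bounded; in all cases $\sup_{[-1,1]}\bigl|\tilde{u}^{(i+L)}\bigr|<\infty$.

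Substituting into \eqref{Eq_expression_gijkl}, the polynomial part produces constant multiples of $\mu^{\ell}\int_{-\pi}^{\pi}\sin^{j}\theta\,\cos^{i-j+\ell}\theta\,\zeta_M(k\theta)\,d\theta$ for $0\le\ell\le L-1$; expanding $\cos\theta,\sin\theta$ via $e^{\pm i\theta}$ shows $\sin^{j}\theta\,\cos^{i-j+\ell}\theta$ is a trigonometric polynomial of degree $\le i+\ell\le i+L-1<k$, hence orthogonal on $[-\pi,\pi]$ to $\cos(k\theta)$ and $\sin(k\theta)$, so every polynomial term disappears. The surviving remainder term is
\[
\frac{1}{(L-1)!}\int_{-\pi}^{\pi}\sin^{j}\theta\,\cos^{i-j}\theta\,\zeta_M(k\theta)\left(\int_0^{\mu}(\mu-s)^{L-1}\Bigl(\tfrac{\cos\theta}{h}\Bigr)^{L}\tilde{u}^{(i+L)}\!\bigl(\tfrac{s}{h}\cos\theta\bigr)\,ds\right)d\theta ,
\]
and bounding $|\cos\theta|,|\sin\theta|,|\zeta_M|\le1$ gives $\bigl|I_M[\tilde{u}^{(i)},j,k](\mu)\bigr|\le \tfrac{2\pi}{L!\,h^{L}}\sup_{[-1,1]}\bigl|\tilde{u}^{(i+L)}\bigr|\,|\mu|^{L}$. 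Finally, since $\tilde{u}^{(r)}(t)=(h/2)^{r}u^{(r)}(\xi(t))$, one has $\sup_{[-1,1]}|\tilde{u}^{(i+L)}|=(h/2)^{i+L}\sup_{[a,b]}|u^{(i+L)}|\le(h/2)^{i+L}\sup_{\Omega}|u^{(i+L)}|$, so the bound is $\le C|\mu|^{L}$ with $C$ independent of $\mu$ and of $h\in(0,1]$ — exactly $O\!\left(\mu^{\min\{k,m+1\}-i}\right)$ in part~i and $O\!\left(\mu^{\min\{k,m+2\}-i}\right)$ in part~ii.

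The main obstacle, and essentially the only place requiring care, is the low-regularity situation $u\in X^m$ with $k\ge m+1$: there the remainder unavoidably involves $\tilde{u}^{(m+1)}$, which exists only off a finite set, so one must justify using Taylor's theorem in integral-remainder form (legitimate since $\tilde{u}^{(m)}$ is absolutely continuous, $u\in C^{m+2}_{\mathrm{pw}}$) and rely on the boundedness of $\tilde{u}^{(m+1)}$ on the compact interval. Beyond that, the proof is just careful bookkeeping — tracking whether $k\le m$, $k=m+1$, or $k\ge m+2$ (resp.\ $k\le m+1$ or $k\ge m+2$ in part~ii) so that $L$ matches $\min\{k,\cdot\}-i$ exactly and the right derivative of $\tilde{u}$ lands in the remainder.
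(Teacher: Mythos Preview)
Your proposal is correct and follows essentially the same route as the paper: Taylor-expand in $\mu$ about $0$, kill the low-order terms via the orthogonality of the degree-$(i+\ell)$ trigonometric polynomial $\sin^{j}\theta\,\cos^{i-j+\ell}\theta$ against $\zeta_M(k\theta)$ when $i+\ell<k$, and bound the remainder by the supremum of the highest available derivative. The only cosmetic difference is that the paper Taylor-expands the integral $I_M[\tilde{u}^{(i)},j,k](\mu)$ itself (differentiating under the integral sign), whereas you expand the integrand $g_\theta(\mu)$ pointwise and then integrate; your version is in fact a bit more explicit about the integral-remainder form and the $h$-independence of the implied constant.
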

\begin{proof}
(i). We first investigate the regularity of the integral $I_M[\tilde{u}^{(i)},j,k](\mu)$ defined in \eqref{Eq_expression_gijkl} at $\mu=0$. If $u\in X^{m}(\Omega)$, using Leibniz rule we deduce that $I_{M}[\tilde{u}^{(i)},j,k](\mu)$ is $m-i$ times differentiable. In particular the derivatives of $I_{M}[\tilde{u}^{(i)},j,k](\mu)$ are given by 
\begin{equation}\label{eq_derivative_of_integral}
	    \frac{d^{n}}{d\mu^{n}}I_{M}[\tilde{u}^{(i)},j,k](\mu) = \frac{1}{2^n}\int_{-\pi}^{\pi}\tilde{u}^{(i+n)}\left(\frac{\mu}{h}\cos{\theta}\right)\sin^{j}(\theta)\cos^{i-j+n}(\theta)\zeta_{M}(k\theta)d\theta = \frac{1}{2^n}I_{M}[\tilde{u}^{(i+n)},j,k](h),
	\end{equation}
	for $0 \le n \le m-i$. Since $u$ is piecewise $C^{m+2}$ which implies that $(m+1-i)^{th}$ derivative of $I_{M}[\tilde{u}^{(i)},j,k](h)$ exists. Now doing a finite Taylor expansion of $I_{M}[\tilde{u},j,k](\mu)$ at $\mu=0$, we have 
    \begin{equation}\label{eq_Taylor_expansion_Xm+2}
	I_{M}[\tilde{u}^{(i)},j,k](\mu) = \sum_{n = 0}^{m-i} I_{M}[\tilde{u}^{(i+n)},j,k](0) \frac{\mu^{n}}{2^n n!} + O\left(\mu^{m+1-i}\right).
	\end{equation}
    We now estimate the right hand side of equation \eqref{eq_Taylor_expansion_Xm+2}. For $0\le n \le m-i$, 
    \begin{equation}\label{eq_int_powsincos}
    I_{M}[\tilde{u}^{(i+n)},j,k](0) = \tilde{u}^{(i+n)}(0)\int_{-\pi}^{\pi} \sin^j{(\theta)}\cos^{i+n-j}{(\theta)}\zeta_M(k\theta) d\theta = 0,  
    \end{equation}
     if $0\le j \le i+n < k $ or $n<k-i$. Indeed, equation \eqref{eq_int_powsincos} follows by expanding powers of sine and cosine in terms of the complex exponential and using $\int_{-\pi}^{\pi}\cos{(
     k\theta)}d\theta = \int_{-\pi}^{\pi}\sin{(
     k\theta)}d\theta = 0$ for all non-zero $k$. Therefore, if $k \le m$, the first non-zero term of the summation in equation \eqref{eq_Taylor_expansion_Xm+2} occur at $n=k-i$ and we obtain $I_M[\tilde{u}^{i},j,k](\mu) = O(\mu^{k-i})$. If $k > m$, then every term in the summation of \eqref{eq_Taylor_expansion_Xm+2} becomes zero because of \eqref{eq_int_powsincos}, which implies $I_M[\tilde{u}^{i},j,k](\mu) = O(\mu^{m+1-i})$. Combining these two cases we get the desired result when $u\in X^{m}(\Omega)$. For part (ii), using Leibniz rule we deduce that $I_{M}[\tilde{u}^{(i)},j,k](\mu)$ is $m+2-i$ times differentiable. Now doing a finite Taylor expansion of $I_{M}[\tilde{u}^{(i)},j,k](\mu)$ at $\mu=0$, we have
    \begin{equation*}
	I_{M}[\tilde{u}^{(i)},j,k](\mu) = \sum_{n = 0}^{m+1-i} I_{M}[\tilde{u}^{(i+n)},j,k](0) \frac{\mu^{n}}{2^n n!} + O\left(\mu^{m+2-i}\right). 	
	\end{equation*}
    Now, following the similar arguments which we carried out for $u\in X^{m}(\Omega)$, we get the desired bound that $I_{M}[\tilde{u}^{(i)},j,k](\mu) = O(\mu^{k-i}) + O(\mu^{m+2-i})$ if $u\in C^{m+2}(\Omega)$.
\end{proof}
\begin{theorem}\label{Thm_Cheby_Coeff}
Let $k\geq 1$ be an integer and let $c_{k}$ denote the $k$-th Chebyshev coefficient of $\tilde{u}$ defined as in \eqref{eq_ul_def}. Then the following holds:
  \begin{enumerate}
    \item[i.] If $u \in X^{m}(\Omega)$ then 
	\begin{equation}
		|c_{k}| \le C\frac{h^{\min\{k,m+1\}}}{k^{m+2}}.
	\end{equation}
    \item[ii.] If $u \in C^{m+2}(\Omega)$ then 
	\begin{equation}
		|c_{k}| \le C\frac{h^{\min\{k,m+2\}}}{k^{m+2}},
    \end{equation}
    \end{enumerate}
	where $h = b - a$ is the length of $\Omega$ such that $h \le 1$ and $C$ is a positive constant independent of $k$ and $h$. 
\end{theorem}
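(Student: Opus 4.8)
The plan is to combine the two preceding lemmas. \Cref{lemma_Cheby_Coeff_form} expresses $c_k$ as a finite sum $\sum_{i,j}\frac{h^i}{k^M}\alpha_{ij}^{(M)}I_M[\tilde u^{(i)},j,k]$ and thereby encodes the decay in $k$ produced by $M$ integrations by parts, while \Cref{lemma_Cheby_Coeff_Taylor_thm} encodes the decay in $h$ by Taylor expanding $I_M[\tilde u^{(i)},j,k](\mu)$ about $\mu=0$, the point being that the low-order Taylor coefficients vanish whenever $i<k$. Matching the two gives the theorem, the only delicate issue being that for $u\in X^m(\Omega)$ the expansion of \Cref{lemma_Cheby_Coeff_form} is available only up to $M=m+1$, so a naive combination yields $|c_k|\le Ch^{\min\{k,m+1\}}/k^{m+1}$, one power of $k$ short of the claim, and that last power has to be recovered by hand.

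Consider first part (ii), $u\in C^{m+2}(\Omega)$, where \Cref{lemma_Cheby_Coeff_form} may be invoked with $M=m+2$, so that $c_k=\sum_{i=1}^{m+2}\sum_{j=0}^{i}\frac{h^i}{k^{m+2}}\alpha_{ij}^{(m+2)}I_{m+2}[\tilde u^{(i)},j,k]$ already carries the correct power $k^{m+2}$. It then suffices to show every summand is $O\!\left(h^{\min\{k,m+2\}}/k^{m+2}\right)$. For $i\le m$ with $i<k$ this is immediate from \Cref{lemma_Cheby_Coeff_Taylor_thm}(ii), which gives $I_{m+2}[\tilde u^{(i)},j,k]=O(h^{\min\{k,m+2\}-i})$; for $i\le m$ with $i\ge k$, and for $i=m+2$, one bounds $h^i\le h^{\min\{k,m+2\}}$ (using $h\le1$) and $I_{m+2}[\tilde u^{(i)},j,k]=O(1)$. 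The only summand needing extra care is $i=m+1$ with $k>m+1$: here $\mu\mapsto I_{m+2}[\tilde u^{(m+1)},j,k](\mu)$ is $C^1$ since $u\in C^{m+2}$, and it vanishes at $\mu=0$ because $m+1<k$ forces $\int_{-\pi}^{\pi}\sin^{j}\theta\,\cos^{m+1-j}\theta\,\zeta_{m+2}(k\theta)\,d\theta=0$; the Taylor argument of \Cref{lemma_Cheby_Coeff_Taylor_thm} then gives $I_{m+2}[\tilde u^{(m+1)},j,k]=O(h)$, so that term is $O(h^{m+2}/k^{m+2})$. Summing the finitely many summands proves (ii).

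For part (i), $u\in X^m(\Omega)$, apply \Cref{lemma_Cheby_Coeff_form} with $M=m+1$. Bounding the summands with $i\le m,\ i<k$ by \Cref{lemma_Cheby_Coeff_Taylor_thm}(i), those with $i\ge k$ by $h^i\le h^{\min\{k,m+1\}}$ together with $I_{m+1}[\tilde u^{(i)},j,k]=O(1)$, and the $i=m+1$ summand by $I_{m+1}[\tilde u^{(m+1)},j,k]=O(1/k)$ (from \Cref{lemma_Cheby_Coeff_form}), one obtains the preliminary estimate $|c_k|\le Ch^{\min\{k,m+1\}}/k^{m+1}$ for all $k\ge1$. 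For $1\le k\le m$ this already yields the claim, since then $k^{m+1}\ge k^{m+2}/m$. For $k\ge m+1$ (where $\min\{k,m+1\}=m+1$) the $i=m+1$ summand is in fact $O(h^{m+1}/k^{m+2})$, and for each summand with $i\le m$ we apply the recursion of \Cref{lemma_expression_Ck_induction} one further time to write $I_{m+1}[\tilde u^{(i)},j,k]=\frac1k\big(\text{a linear combination of }I_{m+2}[\tilde u^{(i)},j\pm1,k]\text{ and }\tfrac h2 I_{m+2}[\tilde u^{(i+1)},j+1,k]\big)$; since $i\le m<k$, \Cref{lemma_Cheby_Coeff_Taylor_thm}(i) bounds each $I_{m+2}[\tilde u^{(i)},j\pm1,k]$ that occurs, and also $I_{m+2}[\tilde u^{(i+1)},j+1,k]$ when $i+1\le m$, by $O(h^{m+1-i})$, while in the one remaining case $i=m$ the prefactor $\tfrac h2$ multiplies the merely bounded integral $I_{m+2}[\tilde u^{(m+1)},j+1,k]$, again giving $O(h)=O(h^{m+1-m})$. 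Hence $I_{m+1}[\tilde u^{(i)},j,k]=O(h^{m+1-i}/k)$, the $i$-th summand is $O(h^{m+1}/k^{m+2})$, and summing gives $|c_k|\le Ch^{m+1}/k^{m+2}=Ch^{\min\{k,m+1\}}/k^{m+2}$.

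The main obstacle is exactly this last step of part (i): $X^m$-regularity supports only $m+1$ global integrations by parts, so the extra factor $1/k$ has to be squeezed out of the lower-order pieces ($i\le m$), where the integration-by-parts recursion still applies without boundary/jump terms, and one must verify that the single rough integral $I_{m+2}[\tilde u^{(m+1)},\cdot,k]$ created in the process is neutralized by its companion factor $h/2$. A routine but necessary secondary check is that all implied constants are uniform in $h$ and $k$: this holds because $|\zeta_M(k\theta)|\le1$, the relevant suprema of derivatives of $u$ are taken over a fixed interval containing every patch, $h\le1$, and the coefficients $\alpha_{ij}^{(M)}$ together with the number of summands and the number of discontinuities of $u^{(m+1)}$ depend only on $m$ and $u$.
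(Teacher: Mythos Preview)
Your proposal is correct and follows essentially the same route as the paper: apply \Cref{lemma_Cheby_Coeff_form} with $M=m+2$ for part~(ii) and $M=m+1$ for part~(i), bound the $h$-dependence via \Cref{lemma_Cheby_Coeff_Taylor_thm}, and in part~(i) recover the missing factor $1/k$ by using $I_{m+1}[\tilde u^{(m+1)},j,k]=O(1/k)$ for the top term together with one further application of the recursion \Cref{lemma_expression_Ck_induction} on the $i\le m$ terms. You are in fact a bit more careful than the paper about the edge cases ($i\ge k$, the $i=m+1,m+2$ summands in part~(ii), and small $k$ in part~(i)); the one case you skip, $i=m+1$ with $k\le m+1$ in part~(ii), is handled by the same trivial bound $h^{m+1}\le h^{\min\{k,m+2\}}$ you already use for $i=m+2$.
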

\begin{proof}
	(i). If $u\in X^{m}(\Omega)$, using \Cref{lemma_Cheby_Coeff_form} for $M=m+1$, an expression for $c_{k}$ can be obtained as
	\begin{equation}\label{Eq_Cheby_Coeff_IBP_m+1}
		c_k = \sum_{i=1}^{m} \sum_{j=0}^{i} \alpha_{ij}^{(m+1)} \frac{h^{i}}{k^{m+1}} I_{m+1}[\tilde{u}^{(i)},j,k]+\sum_{j=0}^{m+1} \alpha_{(m+1)j}^{(m+1)} \frac{h^{m+1}}{k^{m+1}} I_{m+1}[\tilde{u}^{(m+1)},j,k].
	\end{equation}
Additionally, in \Cref{lemma_Cheby_Coeff_form} we have shown that $I_{m+1}[\tilde{u}^{(m+1)},j,k] =O\left(1/k\right)$. Using this bound, the second term in \eqref{Eq_Cheby_Coeff_IBP_m+1} can be estimated as 
	\begin{equation*}
		\left|\sum_{j=0}^{m+1} \alpha_{(m+1)j}^{(m+1)} \frac{h^{m+1}}{k^{m+1}} I_{m+1}[\tilde{u}^{(m+1)},j,k]\right| \le C\frac{h^{m+1}}{k^{m+2}}.
	\end{equation*}
Using the recurrence relation for $I_{m+1}[\tilde{u}^{(i)},j,k]$ described in \Cref{lemma_expression_Ck_induction}, the first sum in equation \eqref{Eq_Cheby_Coeff_IBP_m+1} can be expressed as
	\begin{align*} 
		c_k &= \sum_{i=1}^{m} \frac{h^{i}}{k^{m+2}}\sum_{j=0}^{i} \alpha_{ij}^{(m+1)} \left[ (-1)^{m}j I_{m+2}[\tilde{u}^{(i)},j-1,k]-(-1)^m (i-j)  I_{m+2}[\tilde{u}^{(i)},j+1,k] \right. \\
        &\quad  \left. - \frac{(-1)^m}{2}h I_{m+2}[\tilde{u}^{(i+1)},j+1,k] \right].
	\end{align*}
	Thus, using \Cref{lemma_Cheby_Coeff_Taylor_thm} and observing the fact $I_{M}[\tilde{u}^{(i)},j,k](h) = I_{M}[\tilde{u}^{(i)},j,k]$, we obtain 
    \begin{align*}
        \left|I_{m+2}[\tilde{u}^{(i)},j\pm 1,k]\right| &\le  C \, h^{\min\{k,m+1\}-i}, \\
        \left|I_{m+2}[\tilde{u}^{(i+1)},j+1,k]\right| &\le C \, h^{\min\{k,m+1\}-(i+1)}.
    \end{align*}
Since the constants in the above summations are dependent only on $i$, $j$ and $m$, and independent of $h$ and $k$ we obtain the required result. For part (ii), if $u\in C^{m+2}(\Omega)$, using \Cref{lemma_Cheby_Coeff_form} for $M=m+2$, an expression for $c_{k}$ can be obtained as
	\begin{align}\label{eq_Cheby_coeff_uCm+2}
	    c_k &= \sum_{i=1}^{m+2} \sum_{j=0}^{i} \frac{h^{i}}{k^{m+2}}\alpha_{ij}^{(m+2)}  I_{m+2}[\tilde{u}^{(i)},j,k], 
	\end{align}
		using \Cref{lemma_Cheby_Coeff_Taylor_thm} and the fact $I_{m+2}[\tilde{u}^{(i)},j,k] = I_{m+2}[\tilde{u}^{(i)},j,k](h)$, we get 
        \begin{equation}
        \left|I_{m+2}[\tilde{u}^{(i)},j,k]\right| \le C \, h^{\min\{k,m+2\}-i}.
        \end{equation}
        Since $\alpha_{ij}^{m+2}$ in equation \eqref{eq_Cheby_coeff_uCm+2} depends only on $i$, $j$ and $m$, and independent of $h$ and $k$, we obtain the required bound for $c_{k}$ when $u\in C^{m+2}(\Omega)$.        
\end{proof}
In view of the singular integration scheme, note that it is important to investigate the convergence of the discrete Chebyshev expansion of a function $u\in X^{m}(\Omega)$ and the error in $|c_{k}-\tilde{c}_{k}|$ which is the approximation of the continuous coefficients $c_{k}$ by their discrete counterparts $\tilde{c}_{k}$. The following result briefly describes the error in the approximation $|c_{k}-\tilde{c}_{k}|$.
\begin{theorem}
\label{lemma_error_discrete_and_continuous_ChebyCoeff}
Let $0\leq k\leq n-1$ be an integer and $u_{l}$ as defined in \eqref{eq_ul_def}. Let $c_{k}$ denote the $k$-th Chebyshev coefficient of $\tilde{u}$ and $\tilde{c}_{k}$ its discrete counterpart, then the following hold: 
\begin{enumerate}
    \item[i.] If $u \in X^{m}(\Omega)$ then  \begin{equation}
    \left|c_{k}-\tilde{c}_{k}\right|\le C \frac{h^{\min\{n+1,m+1\}}}{n^{m+2}}.
    \end{equation}
    \item[ii.] If $u \in C^{m+2}(\Omega)$ then   
    \begin{equation}
    \left|c_{k}-\tilde{c}_{k}\right|\le C \frac{h^{\min\{n+1,m+2\}}}{n^{m+2}},
    \end{equation}
\end{enumerate}
where $h \le 1$ is the length of $\Omega$, $n$ is the number of terms in discrete Chebyshev expansion of $\tilde{u}$, that is, $\tilde{u}(t) \approx \sum_{k=0}^{n-1} \tilde{c}_{k} T_k(t)$, and $C$ is some constant independent of $h$ and $n$. 
\end{theorem}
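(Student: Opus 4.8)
The plan is to relate the aliasing error $c_k - \tilde c_k$ to the tail of the exact Chebyshev coefficients via the classical aliasing formula, and then apply the decay bounds of \Cref{Thm_Cheby_Coeff} term by term. Concretely, the discrete coefficients $\tilde c_k$ computed from the $n$ Chebyshev nodes $t_i = \cos(\pi\frac{2i+1}{2n})$ satisfy the well-known identity (a consequence of the discrete orthogonality of the $T_k$ at these nodes)
\begin{equation*}
\tilde c_k = c_k + \sum_{j=1}^{\infty} \left( \pm\, c_{2jn - k} \pm\, c_{2jn + k} \right),
\end{equation*}
for $0 \le k \le n-1$, where the signs depend on parity; the point is only that every index appearing on the right is at least $2n - k \ge n+1$. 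Hence
\begin{equation*}
\left| c_k - \tilde c_k \right| \le \sum_{j=1}^{\infty} \left( |c_{2jn-k}| + |c_{2jn+k}| \right),
\end{equation*}
and every coefficient on the right carries an index $\ge n+1 > m+1$ (resp. $> m+2$), so the ``$\min$'' in \Cref{Thm_Cheby_Coeff} is frozen at $m+1$ (resp. $m+2$).

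Next I would bound the series. Using part (i) of \Cref{Thm_Cheby_Coeff}, for every index $r \ge n+1$ we have $|c_r| \le C h^{m+1}/r^{m+2}$, so
\begin{equation*}
\left| c_k - \tilde c_k \right| \le C h^{m+1} \sum_{j=1}^{\infty} \left( \frac{1}{(2jn-k)^{m+2}} + \frac{1}{(2jn+k)^{m+2}} \right) \le C h^{m+1} \sum_{j=1}^{\infty} \frac{2}{(jn)^{m+2}} = \frac{C' h^{m+1}}{n^{m+2}},
\end{equation*}
where I used $2jn - k \ge jn$ (valid since $k \le n-1 < jn$) and $\sum_j j^{-(m+2)}$ converges for $m \ge 0$. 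This is exactly the claimed bound in case (i) once one also checks the regime $n+1 \le m+1$: there $h^{\min\{n+1,m+1\}} = h^{n+1}$, and one instead uses the ``small-index'' branch $|c_r| \le C h^r / r^{m+2} \le C h^{n+1}/r^{m+2}$ for $r \ge n+1$ together with $h \le 1$, giving the uniform statement $h^{\min\{n+1,m+1\}}/n^{m+2}$. Part (ii) is identical with $m+1$ replaced by $m+2$ throughout, invoking the $C^{m+2}$ branch of \Cref{Thm_Cheby_Coeff}.

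The one genuinely delicate point is justifying the aliasing identity itself, i.e. that $\tilde u$ (or rather $u \circ \xi$) has an honest, pointwise-convergent Chebyshev series that can be rearranged at the nodes — for $u \in X^m(\Omega)$ with $m \ge 0$ the function is $C^0$ and its coefficients decay like $k^{-(m+2)}$, which is summable, so the series converges absolutely and uniformly and the interchange of summation with the finite nodal sum is legitimate; this is where the regularity hypothesis $u \in X^m$, $m\ge 0$, is actually used. I would state the aliasing formula as a short lemma (or cite a standard reference such as Trefethen's \emph{Approximation Theory and Approximation Practice} or Mason--Chebyshev), verify the sign/parity bookkeeping just enough to see that all indices on the right are $\ge 2n-k$, and then the estimate above is routine. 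A secondary bit of care is that the discrete coefficients here use the Fej\'er/first-kind node set rather than the Chebyshev--Gauss--Lobatto points, so the correct aliasing pattern is the one for $\{2jn\pm k\}$ (no boundary-node halving), which is what I have used above.
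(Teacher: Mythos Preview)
Your proposal is correct and follows essentially the same route as the paper: derive the aliasing identity $\tilde c_k - c_k = \frac{\gamma_k}{2}\sum_{j\ge 1}\big((-1)^j c_{2jn-k}+(-1)^j c_{2jn+k}\big)$ from discrete orthogonality at the first-kind nodes, then apply \Cref{Thm_Cheby_Coeff} term by term and sum the resulting series. The paper carries out the same steps, only writing out the discrete-orthogonality computation explicitly and doing a slightly more elaborate case split on the $h$-exponent; your observation that $\min\{r,m+1\}\ge \min\{n+1,m+1\}$ for every $r\ge n+1$ (together with $h\le 1$) handles both regimes at once and is a mild simplification.
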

\begin{proof}
	The Chebyshev series expansion of $u_{l}$ is given by
    \begin{equation}\label{Cheby_series_error}
		\tilde{u}(t)=\sum\limits_{k=0}^{\infty} c_{k}T_{k}(t) = \sum\limits_{k=0}^{n-1} c_{k} T_{k}(t)+R_{n}(t), \text{ where }R_{n}(t) = \sum\limits_{k=n}^{\infty} c_{k} T_{k}(t).
	\end{equation}
	From equation (\ref{Cheby_series_error}) it is clear that $(\tilde{u}-R_{n})(t)$ is a polynomial with degree at most $n-1$ implying that it's discrete and continuous Chebyshev coefficients are equal \cite[Theorem 6.7]{mason2002chebyshev}. We obtain 
	\begin{equation}
		\left(\tilde{u}-R_{n}\right)(t)=\sum_{k=0}^{n-1} c_{k}T_{k}(t), \text{ and }
		c_{k}=\frac{\gamma_{k}}{n} \sum_{i=0}^{n-1}\left(\tilde{u}-R_{n}\right)\left(t_{i}\right) T_{k}\left(t_{i}\right),\ \gamma_{k} = 
		\left\{
		\begin{array}{ll}
			1, & \mbox{if } k=0 \\
			2, & \mbox{otherwise},
		\end{array}
		\right.
	\end{equation}
    where $t_i = \cos{\left(\pi \frac{2i+1}{2n}\right)}, \ 0 \le i < n$.
	The error in the continuous and the discrete Chebyshev coefficients becomes
	\begin{equation}\label{Discrete_orth_cheby}
		\tilde{c}_{k}-c_{k} =\frac{\gamma_{k}}{n} \sum_{i=0}^{n-1} R_{n}\left(t_{i}\right) T_{k}\left(t_{i}\right) = \frac{\gamma_{k}}{n} \sum\limits_{j=n}^{\infty} c_{j}\left[\sum\limits_{i=0}^{n-1} T_{j}\left(t_{i}\right) T_{k}\left(t_{i}\right)\right].    
	\end{equation}
Using \cite[Section 4.6]{mason2002chebyshev}, one can derive the following version of the discrete orthogonality rule: For integers $0\leq k \leq n-1$ and $1 \le n\le j$ is given by 
\begin{equation}\label{eqn_discrete_erthogonality}
	\sum\limits_{i=0}^{n-1} T_{j}\left(y_{i}\right)T_{k}\left(y_{i}\right)=\frac{1}{2}\left\{\begin{array}{cl}
			(-1)^{l} n, & \text {if }~ j+k=2 n l, ~l \in \N \\
			(-1)^{l} n, & \text {if }~ j-k=2 n l, ~l \in \N \\
			0, & \text {otherwise, }
		\end{array}\right.
	   \end{equation} 
       where $y_i = \cos{\left(\pi \frac{2i+1}{2n}\right)}, \ 0 \le i < n$.  Using the identity \eqref{eqn_discrete_erthogonality}, we obtain
	\begin{equation}
    \tilde{c}_{k}-c_{k} = \frac{\gamma_{k}}{2}\left[\sum\limits_{i=1}^{\infty} (-1)^{i} c_{2 n i-k}+\sum\limits_{i=1}^{\infty}(-1)^{i} c_{2ni+k}\right].
	\end{equation}
	For every $0\leq k < n$, using \Cref{Thm_Cheby_Coeff}, we deduce
    \begin{equation*}
        \left|c_{k}-\tilde{c}_{k}\right| \leq c \sum_{i=1}^{\infty}\left[\frac{h^{\min\{2ni-k,m+1\}}}{(2 n i-k)^{m+2}}+\frac{h^{\min\{2ni+k,m+1\}}}{(2 n i+k)^{m+2}}\right].
    \end{equation*}
    To capture the appropriate power of $h$, we split the summation in what follows. There exist integer $N_{0} \ge 1$ such that $2n(N_{0}-1)+n-1\leq m+1<2nN_{0}$. For this choice of $N_0$, observe that the exponent on $h$ has the following three cases.
    \begin{enumerate}
        \item [i.] If $1\leq i\leq N_{0}-1$ then $2ni-k<2ni+k\leq m+1$ for each $0 \le k \le n-1$.
        \item [ii.] If $i= N_{0}$ then $n+1\le 2nN_0-k $ and $m+1< 2nN_{0}+k$ for each $0 \le k \le n-1$.
        \item [iii.] If $i\geq N_{0}+1$ then $m+1\leq 2ni-k<2ni+k$ for each $0 \le k \le n-1$.
    \end{enumerate}
    Therefore, we get 
\begin{align*}
       \left|c_{k}-\tilde{c}_{k}\right| &\leq c \frac{h^{n+1}}{n^{m+2}} \left(\sum_{i=1}^{N_{0}-1}\left[\frac{h^{n(2i-1)-k-1}}{(2i-\frac{k}{n})^{m+2}}+\frac{h^{n(2i-1)+k-1}}{(2i+\frac{k}{n})^{m+2}}\right]\right)  + \frac{h^{\min\{2nN_{0}-k,m+1\}}}{(2N_{0}-\frac{k}{n})^{m+2}} \\
       &\quad + c\frac{h^{m+1}}{n^{m+2}}\left(\frac{1}{(2N_{0}+\frac{k}{n})^{m+2}}+ \sum_{i=N_{0}+1}^{\infty}\left[\frac{1}{(2i-\frac{k}{n})^{m+2}}+\frac{1}{(2i+\frac{k}{n})^{m+2}}\right] \right).
\end{align*}
	For $0 \leq k \leq n-1$, the terms $\frac{1}{\left(2 i-\frac{k}{n}\right)^{m+2}}$ and $\frac{1}{\left(2 i+\frac{k}{n}\right)^{m+2}}$ are bounded by $\frac{1}{(2 i-1)^{m+2}}$ and $\frac{1}{(2 i)^{m+2}}$ respectively. Thus, the infinite series is finite and the result follows. The proof when $u\in C^{m+2}$ can be carried out in a similar manner.
\end{proof}
\begin{remark}\label{rem_disCheby_Coeff}
    Let $n$ be a positive integer and $\tilde{c}_{k}$ denote the $k$-th discrete Chebyshev coefficient of $\tilde{u}$ defined as \eqref{eq_ul_def} for $0 \le k <n$. Using triangle inequalities in \Cref{lemma_error_discrete_and_continuous_ChebyCoeff} and  \Cref{Thm_Cheby_Coeff}, the following holds:
  \begin{enumerate}
    \item[i.] If $u \in X^{m}(\Omega)$ then 
	\begin{equation}
		\left|\tilde{c}_{k}\right| \le C \frac{h^{\min\{k,m+1\}}}{k^{m+2}}.   
	\end{equation}
    \item[ii.] If $u \in C^{m+2}(\Omega)$ then 
	\begin{equation}
		\left|\tilde{c}_{k}\right| \leq C \frac{h^{\min\{k,m+2\}}}{k^{m+2}},
    \end{equation}
    \end{enumerate}
	where $h = b-a$ is the length of $\Omega$ and $C$ is some constant independent of $h$ and $k$. 
\end{remark}
Since $T_{k}(t)$ is a polynomial, the approximation $\sum_{k=0}^{n-1}\tilde{c}_k T_k(t)$ is a polynomial approximation to $\tilde{u}$. More precisely, denoting $L^{n}[\tilde{u}](t) = \sum_{k=0}^{n-1}\tilde{c}_{k}T_{k}(t)$, it is easy to see that $L^{n}[\tilde{u}]$ is the $n$-th order Lagrange interpolating polynomial corresponding to the roots of the Chebyshev polynomial $T_{n}(t)$. Interestingly, using \cite[Section 2.5.5]{davis2014methods}, one can notice that the FF-Rule is an interpolatory quadrature and 	
\begin{equation}\label{eq_FFQ_interpolatory}
  \I_{ff}^{n}[\tilde{u}] = \frac{h}{2}\int\limits_{-1}^{1}L^{n}[\tilde{u}](t)dt.  
\end{equation}
In the following theorem we describe a convergence result for the FF-Rule approximation of the integral of $\tilde{u}$, which follows from the results described above.
\begin{theorem}\label{theorem_quad_convergence}
Let $n$ be a positive integer and $\I_{ff}^{n}[\tilde{u}]$ be the FF-Rule approximation of the integral of $u$ on interval $\Omega$ with $n$ discretization points where $\tilde{u}$ is defined as in \eqref{eq_ul_def}. Then the following holds: 
  \begin{enumerate}
    \item[i.] If $u \in X^{m}(\Omega)$ then
    \begin{equation*}
    \left|\int_{a}^{b} u(y)dy - \I_{ff}^{n}[\tilde{u}] \right| \le C \frac{h^{\min\{n+1+n_0,m+2\}}}{n^{m+2}}.
    \end{equation*}
    \item[ii.] If $u \in C^{m+2}(\Omega)$ then
\begin{equation*}
    \left|\int_{a}^{b} u(y)dy - \I_{ff}^{n}[\tilde{u}] \right| \le C \frac{h^{\min\{n+1+n_0,m+3\}}}{n^{m+2}}, 
    \end{equation*}
\end{enumerate}
    where $n_0 = 1$ if $n$ is odd and $n_0=0$ otherwise, $h \le 1$ and $C$ is a constant independent of $h$ and $n$. 
\end{theorem}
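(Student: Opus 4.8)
I would convert the quadrature error into an explicit series in the Chebyshev coefficients $c_k$ of $\tilde u$ and then estimate that series using \Cref{Thm_Cheby_Coeff} and \Cref{lemma_error_discrete_and_continuous_ChebyCoeff}. First, the substitution $y=\xi(t)$ gives $\int_a^b u(y)\,dy=\frac h2\int_{-1}^{1}\tilde u(t)\,dt$, whereas \eqref{eq_FFQ_interpolatory} gives $\I_{ff}^{n}[\tilde u]=\frac h2\int_{-1}^{1}L^{n}[\tilde u](t)\,dt$ with $L^{n}[\tilde u]=\sum_{k=0}^{n-1}\tilde c_kT_k$. Since \Cref{Thm_Cheby_Coeff} already forces $\sum_{k}|c_k|<\infty$, the Chebyshev series $\tilde u=\sum_{k\ge0}c_kT_k$ converges uniformly and may be integrated termwise, so subtracting yields
\[ \int_a^b u(y)\,dy-\I_{ff}^{n}[\tilde u]=\frac h2\underbrace{\sum_{k=0}^{n-1}(c_k-\tilde c_k)\int_{-1}^{1}T_k(t)\,dt}_{=:A}+\frac h2\underbrace{\sum_{k\ge n}c_k\int_{-1}^{1}T_k(t)\,dt}_{=:B}. \]

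Next I would record the elementary value $\int_{-1}^{1}T_k(t)\,dt=\frac{1+(-1)^k}{1-k^{2}}$ for $k\ne1$ (and $0$ for $k=1$): it vanishes for odd $k$, equals $2$ at $k=0$, satisfies $\bigl|\int_{-1}^{1}T_k\bigr|\le\frac{2}{k^{2}-1}$ for even $k\ge2$, and — the key point — satisfies $\sum_{k\ \mathrm{even}}\bigl|\int_{-1}^{1}T_k\bigr|=2+\sum_{j\ge1}\frac{2}{4j^{2}-1}=3<\infty$. Hence only even indices survive in $A$ and $B$, and the smallest even index appearing in $B$ is $n+n_0$, which is exactly where the parity correction $n_0$ in the statement comes from.

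For the tail $B$, since $k\mapsto h^{\min\{k,m+1\}}$ is non-increasing for $h\le1$, \Cref{Thm_Cheby_Coeff}(i) gives $|c_k|\le C\,h^{\min\{n+n_0,m+1\}}k^{-(m+2)}$ for every even $k\ge n+n_0$, so
\[ |B|\le C\,h^{\min\{n+n_0,m+1\}}\sum_{k\ge n+n_0}k^{-(m+4)}\le C\,\frac{h^{\min\{n+n_0,m+1\}}}{n^{m+3}}\le C\,\frac{h^{\min\{n+n_0,m+1\}}}{n^{m+2}}, \]
and therefore $\frac h2|B|\le C\,h^{\min\{n+1+n_0,\,m+2\}}/n^{m+2}$. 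For the aliasing part $A$, \Cref{lemma_error_discrete_and_continuous_ChebyCoeff}(i) supplies the $k$-independent bound $|c_k-\tilde c_k|\le C\,h^{\min\{n+1,m+1\}}/n^{m+2}$; multiplying by the finite constant $\sum_{k\ \mathrm{even}}\bigl|\int_{-1}^{1}T_k\bigr|=3$ gives $|A|\le C\,h^{\min\{n+1,m+1\}}/n^{m+2}$, hence $\frac h2|A|\le C\,h^{\min\{n+2,\,m+2\}}/n^{m+2}\le C\,h^{\min\{n+1+n_0,\,m+2\}}/n^{m+2}$ because $n+2\ge n+1+n_0$. Adding the two contributions proves part (i); part (ii) is the same computation with \Cref{Thm_Cheby_Coeff}(ii) and \Cref{lemma_error_discrete_and_continuous_ChebyCoeff}(ii) replacing (i), which turn every exponent $m+1$ into $m+2$ and hence produce $m+3$ in place of $m+2$ in the final estimate.

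I expect the only delicate step to be the bound on $A$: replacing $\bigl|\int_{-1}^{1}T_k\bigr|$ by the crude value $2$ and summing the $O(n)$ terms $0\le k<n$ would introduce a spurious factor $n$ and destroy the claimed rate, so the argument genuinely relies on the $n$-independent convergence of $\sum_{k}\bigl|\int_{-1}^{1}T_k\bigr|$. Similarly, in $B$ the extra $k^{-2}$ supplied by $\int_{-1}^{1}T_k$, on top of the $k^{-(m+2)}$ decay from \Cref{Thm_Cheby_Coeff}, is precisely what makes the tail sum $O(n^{-(m+3)})$ and lets the outer factor $h/2$ be absorbed into the stated exponent.
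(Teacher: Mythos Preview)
Your proof is correct and follows essentially the same route as the paper: both split the error as $\frac h2\int(\tilde u-L^n[\tilde u])$, expand in Chebyshev series, separate the tail $\sum_{k\ge n}c_k\int T_k$ from the aliasing part $\sum_{k<n}(c_k-\tilde c_k)\int T_k$, and then invoke \Cref{Thm_Cheby_Coeff} and \Cref{lemma_error_discrete_and_continuous_ChebyCoeff} together with the vanishing of $\int_{-1}^1 T_k$ for odd $k$. Your write-up is in fact more explicit than the paper's---you compute $\sum_{k\ \mathrm{even}}\bigl|\int T_k\bigr|=3$ and explain exactly where the parity shift $n_0$ originates---whereas the paper leaves these details to the reader.
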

\begin{proof}
    Using equation  \eqref{eq_FFQ_interpolatory} and the identity related to the integral of a Chebyshev polynomial given in \cite[Section 2.5.5]{davis2014methods}, one can notice that the error in the FF-Rule approximation can be written as follows:
	\begin{align}\label{eq_subst_3.6}
		\left|\int_{a}^{b}u(y)~dy - \I_{ff}^{n}[\tilde{u}] \right| &= \left|\frac{h}{2}\int_{-1}^{1}\tilde{u}(t)~dt - \frac{h}{2}\int_{-1}^{1}L^{n}[\tilde{u}](t)dt\right| \nonumber
        \\
        &= \frac{h}{2}\left|\int_{-1}^{1}\sum\limits_{k=0}^{\infty}c_{k}T_{k}(t)~dt - \int_{-1}^{1}\sum\limits_{k=0}^{n-1}\tilde{c}_{k}T_{k}(t)~dt\right| \nonumber\\
		&\leq \frac{h}{2}\sum\limits_{k=n+n_0}^{\infty}|c_{k}| \left|\int_{-1}^{1}T_{k}(t)~dt \right| + \frac{h}{2}\sum\limits_{k=0}^{n-1+n_0}|c_{k} - \tilde{c}_{k}|\left| \int_{-1}^{1}T_{k}(t)~dt\right|.
	\end{align}
	Now using \Cref{lemma_error_discrete_and_continuous_ChebyCoeff} for $u\in X^{m}$ and $u \in C^{m+2}$ in equation \eqref{eq_subst_3.6} we can prove (i) and (ii) respectively.
\end{proof} 
The decay rate of discrete Chebyshev coefficients presented in \Cref{rem_disCheby_Coeff} and the FF-rule approximation derived in \Cref{theorem_quad_convergence} are demonstrated numerically the following examples. 
\begin{example}
Consider the function $u(x) = x^3|x|+x^3+x^2+x+1$ in the interval $\left[-\frac{1}{2N},\frac{1}{N}\right]$ where $N = 2,4,8,\cdots,256$. That is, the length of the interval $h=\frac{3}{2N}$. Table (\ref{tab:convcoeff_exp}) demonstrates the decay rates of the discrete Chebyshev coefficients $\tilde{c}_{k}$ of $\tilde{u}(x) = u\left(\xi(x)\right)$, where $1\leq k\leq 10$, $\xi(t)= ht/2 +1/4N$. Since $u\in X^{m}[-1, 1]$, with $m=3$, according to \Cref{rem_disCheby_Coeff}, $\tilde{c}_{k}$ converge to zero with the rate $\min\{k,4\}$ as $h\rightarrow 0$, for every $k\geq 1$, which is what the columns in the table (\ref{tab:convcoeff_exp}) show. Hence, the numerical convergence rates exactly align with the theoretically estimated convergence rates.  

\begin{table}[H]
	\centering
	\begin{tabular}{|c|c|c|c|c|c|c|c|c|c|c|} \hline 
		$N$&  $\tilde{c}_1$&  $\tilde{c}_2$&  $\tilde{c}_3$ &$\tilde{c}_4$&  $\tilde{c}_5$&  $\tilde{c}_6$& $\tilde{c}_7$ & $\tilde{c}_8$& $\tilde{c}_9$ & $\tilde{c}_{10}$\\ \hline 
		4&  $1.33$&  $2.35$&   $3.30$&$4.00$&  $4.00$&  $4.00$& $4.00$ & $4.00$ & $4.00$ & $4.00$\\ \hline 
		8&  $1.13$&  $2.16$&   $3.18$&$4.00$&  $4.00$&  $4.00$& $4.00$ & $4.00$ & $4.00$ & $4.00$\\ \hline 
		16&  $1.05$&  $2.08$&   $3.10$&$4.00$&  $4.00$&  $4.00$& $4.00$ & $4.00$ & $4.00$ & $4.00$\\ \hline 
		32&  $1.02$&  $2.04$&   $3.05$&$4.00$&  $4.00$&  $4.00$& $4.00$ & $4.00$ & $4.00$ & $4.00$\\ \hline 
		64&  $1.01$&  $2.02$&   $3.03$&$4.00$&  $4.00$&  $4.00$& $4.00$ & $4.00$ & $4.00$ & $4.00$\\ \hline 
		128&  $1.01$&  $2.01$&   $3.01$&$4.00$&  $4.00$&  $4.00$& $4.00$ & $4.00$ & $4.00$ & $4.00$\\ \hline 
		256&  $1.00$&  $2.00$&   $3.01$&$4.00$&  $4.00$&  $4.00$& $4.00$ & $4.00$ & $4.01$ & $4.00$\\ \hline
	\end{tabular}
	\caption{Order of convergence of Chebyshev coefficients $\tilde{c}_{k}$ of $\tilde{u}\in X^{m}[-1, 1]$ with $m=3$. The expected decay rate of $\tilde{c}_{k}$ is $\min\{k,m+1\}=\min\{k,4\}$. The theoretical decay rate match with their numerical counterparts.}
	\label{tab:convcoeff_exp}
\end{table}
\end{example}

\section{Quadrature Analysis}\label{error}
In this section, we present error analysis for single patch and later generalize this idea for arbitrary number of patches. We consider the integration patch $\Omega=[a,b]$ and length $h = b-a$. For analysis in single patch we avoid mentioning patch number $\ell$. Recall from \Cref{method}, for the target point $x\in\Omega$,
\begin{equation}
		\I[u](x) = \int\limits_{-1}^{1} g_{\alpha}\left(|x-\xi(t) | \right) u(\xi(t)) dt, \
        \beta_{k}(x) = \frac{h}{2}\int_{-1}^{1} g_{\alpha}(|x-\xi(t)|)T_{k}(t)dt,
\end{equation}
where $\xi(t) = \frac{h}{2}t+\frac{a+b}{2}$ and the kernel is defined as in \eqref{eq_kernel}. The error in singular quadrature is given by 
\begin{equation}\label{Eq_total_err_sing_int}
	E_{\textrm{sing}}^{n}[u](x) = \left|\I[u](x)-\I_{\textrm{sing}}^{n}[u](x)\right| \leq   E^{n}_{1}[u](x)+E^{n}_{2}[u](x)+E^{n}_{3}[u](x), 
\end{equation}
where $\I_{\textrm{sing}}^{n}[u]$ is defined as in equation \eqref{sing_quad}, and $E^{n}_{1}[u],E^{n}_{2}[u],E^{n}_{3}[u]$ are given by
\begin{align}
	E^{n}_{1}[u](x) &= \left|\I[u](x)- \sum\limits_{k=0}^{n-1} c_{k}\beta_{k}(x)\right|, \label{eq_error1}\\
	E^{n}_{2}[u](x) &=  \sum\limits_{k=0}^{n-1}\left|c_{k}-\tilde{c}_{k}\right| \left|\beta_{k}(x)\right|,\label{eq_error2} \\ 
	E^{n}_{3}[u](x) &= \sum\limits_{k=0}^{n-1} \left|\tilde{c}_{k} \right| \left|\beta_{k}-\tilde{\beta}_{k}(x)\right|. \label{eq_error3}
\end{align}
 Recall from \Cref{method}, $\tilde{\beta}_{k}$ is defined as in equation \eqref{sing_quad}, and $c_{k},\tilde{c}_{k}$ are the $k$-th continuous and discrete Chebyshev coefficient of the density $u$ respectively. Term $E^{n}_{1}$ denotes error in truncation of Chebyshev expansion, $E^{n}_{2}$ is the error in the approximation of  continuous Chebyshev coefficients $c_{k}$ by discrete coefficients $\tilde{c}_{k}$, and $E^{n}_{3}$ represents error the approximation of the weights $\beta_{k}(x)$ by the singular integration strategy discussed in \Cref{method}. In the following we establish an error estimate for all the three error terms separately. 

\subsection{Error in truncation}\label{subsection_error_trunc}
We now compute the bound for the first error term $E_{1}^{n}[u]$, for that we need the decay rate of $c_{k}$ on $\Omega$, which we derived in \Cref{Thm_Cheby_Coeff}, and in what follows, we find a bound for the continuous wights $\beta_{k}(x)$, which holds uniformly for each $x \in \Omega$. 
\begin{lemma}\label{lemma_bound_on_beta_k}
	For $x \in \Omega$,
	\begin{equation} \left|\beta_{k}(x)\right| \leq C\begin{dcases}
		\frac{h |\log{h}|}{k^2}+\frac{h\log{k}}{k}, & \text{ if } \alpha = 0, \ k>1 \\ 
		\frac{h^{1-\alpha}}{k^{1-\alpha}}, & \text{ if } 0<\alpha<1, \ k\ge 1.
	\end{dcases}
    \end{equation}
\end{lemma}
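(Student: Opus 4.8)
The plan is to bound $\beta_k(x) = \frac{h}{2}\int_{-1}^{1} g_\alpha(|x-\xi(t)|)T_k(t)\,dt$ by first making the substitution $\xi(t) = \frac{h}{2}t + \frac{a+b}{2}$ explicit so that $|x-\xi(t)| = \frac{h}{2}|t_x - t|$ where $t_x = \xi^{-1}(x) \in [-1,1]$, pulling out the $h$-dependence of the kernel. For the power-law kernel this produces an overall factor $h^{1-\alpha}$ (from the $h$ in front times $h^{-\alpha}$ from $|x-\xi(t)|^{-\alpha}$), and for the logarithmic kernel it produces $h\log h$ plus a bounded remainder, which already explains the shape of the claimed bounds. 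It then remains to bound, uniformly in $t_x \in [-1,1]$, the dimensionless integrals $\int_{-1}^{1} |t_x - t|^{-\alpha} T_k(t)\,dt$ and $\int_{-1}^{1} \log|t_x - t|\, T_k(t)\,dt$ by $C/k^{1-\alpha}$ and $C(\log k)/k$ respectively (the latter with an extra $|\log h|/k^2$ piece coming from the split of $\log(h/2 \cdot |t_x-t|)$ into $\log(h/2)$ and $\log|t_x-t|$, where $\int_{-1}^1 T_k = O(1/k^2)$ for the constant part gives the $h|\log h|/k^2$ term).

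For the core estimates I would use the known closed/near-closed forms for these "modified moments" of Chebyshev polynomials. Writing $t = \cos\theta$, $t_x = \cos\phi$, the integral $\int_{-1}^1 |t_x-t|^{-\alpha}T_k(t)\,dt = \int_0^\pi |\cos\phi - \cos\theta|^{-\alpha}\cos(k\theta)\sin\theta\,d\theta$; the singularity at $\theta=\phi$ is integrable since $\alpha<1$, and one extracts $k$-decay by integrating by parts once (differentiating $\cos(k\theta)$ costs a factor $1/k$, and the boundary/derivative terms involve $|\cos\phi-\cos\theta|^{-\alpha}$ or its derivative, which near $\theta = \phi$ behaves like $|\theta-\phi|^{-\alpha}$ or $|\theta-\phi|^{-\alpha-1}$ — the latter is not integrable, so a naive integration by parts fails). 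The cleaner route is to cite or reprove the fact that the Chebyshev coefficients of the function $t\mapsto |t_x-t|^{-\alpha}$ (for fixed $t_x$) decay like $k^{-(1-\alpha)}$ uniformly in $t_x$ — this is a classical Jacobi-weight / endpoint-singularity estimate (e.g. via the Mehler–Dirichlet type representation or the connection to Gegenbauer expansions), and analogously the Chebyshev coefficients of $\log|t_x-t|$ decay like $(\log k)/k$ uniformly. Then $\beta_k(x)$ is, up to the $h$ prefactor and the logarithmic-constant split, exactly $\pi/2$ times such a coefficient, so the bound follows.

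The main obstacle is obtaining the decay estimate \emph{uniformly} in the location $t_x$ of the singularity, including the endpoint cases $t_x = \pm 1$: the constant $C$ must not degenerate as $t_x$ approaches the boundary of $[-1,1]$, where the $\sqrt{1-t^2}$ Chebyshev weight interacts with the kernel singularity. I would handle this by splitting the integral into a neighbourhood $|\theta - \phi| \le \delta$ of the singularity and its complement: on the complement the integrand is smooth and repeated integration by parts gives rapid decay in $k$ with constants controlled by $\delta$; on the neighbourhood one estimates crudely, using $|T_k| \le 1$ and $\int_{|\theta-\phi|\le \delta}|\theta-\phi|^{-\alpha}d\theta = O(\delta^{1-\alpha})$, then optimizes $\delta \sim 1/k$ to balance the two contributions, yielding $O(k^{-(1-\alpha)})$ (and $O(\delta\log(1/\delta)) = O(k^{-1}\log k)$ in the logarithmic case). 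A secondary technical point is the small-$k$ regime: the stated bound for $\alpha=0$ excludes $k=1$ (where $\log k = 0$ would be spurious), and one should note $\beta_0(x)$ and $\beta_1(x)$ are trivially $O(h|\log h|)$ and $O(h)$ respectively by direct estimation, so these are absorbed into the constant $C$ elsewhere in the error analysis.
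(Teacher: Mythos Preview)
Your approach is essentially the same as the paper's: rewrite $|x-\xi(t)|=\tfrac{h}{2}|t_x-t|$, pull out the $h$-dependence (for $\alpha=0$ split $\log\bigl(\tfrac{h}{2}|t_x-t|\bigr)$ into $\log(h/2)$ plus $\log|t_x-t|$ and use $\int_{-1}^1 T_k = O(k^{-2})$ for the first piece), and then bound the remaining dimensionless moment $\int_{-1}^1 |t_x-t|^{-\alpha}T_k(t)\,dt$ (resp.\ $\int_{-1}^1\log|t_x-t|\,T_k(t)\,dt$) uniformly in $t_x$. For this last step the paper simply invokes an existing result --- Lemma~3.1 of Dom\'\i nguez--Graham--Smyshlyaev (\emph{Filon--Clenshaw--Curtis} paper), which gives precisely the uniform $O(k^{-(1-\alpha)})$ and $O(k^{-1}\log k)$ bounds --- so your ``cleaner route: cite or reprove'' remark is exactly what the paper does.

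Your alternative direct route (localise to $|\theta-\phi|\le\delta$, integrate by parts on the complement, optimise $\delta\sim 1/k$) is a legitimate elementary substitute for that citation and yields the same rates; its payoff is self-containedness, at the cost of the endpoint analysis you already flagged. Note that when $t_x=\pm1$ the kernel in $\theta$ behaves like $\theta^{-2\alpha}$ rather than $|\theta-\phi|^{-\alpha}$, but the extra $\sin\theta$ from the Jacobian compensates, so the same balancing still gives $O(k^{-(1-\alpha)})$; you would just need to write that case out separately. The paper sidesteps all of this by relying on the cited lemma.
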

\begin{proof}
    For $x \in \Omega$, there exists $t_x \in [-1,1]$ such that $\xi(t_x) = x$. Therefore, $\beta_k$ can be written as \begin{equation}\label{beta_int_cases}
        \beta_k(x) = \frac{h}{2}\int_{-1}^{1} g_{\alpha}(|\xi(t_x)-\xi(t)|) T_k(t) dt = \begin{dcases}
            \frac{h}{2}\int_{-1}^1 \log{\left(\left|\frac{h}{2}(t_x-t)\right|\right)} T_k(t)dt, \text{ if } \alpha =0\\
            \left(\frac{h}{2}\right)^{1-\alpha}\int_{-1}^1 \left|t_x-t\right|^{-\alpha} T_k(t)dt, \text{ if } 0<\alpha<1.
        \end{dcases}
    \end{equation}
    For $\alpha=0$, the integral can split as \begin{equation}\label{beta_log_kernel_split}
        \beta_k(x) = \frac{h}{2}\log{\left(\frac{h}{2}\right)}\int_{-1}^1 T_k(t)dt + \frac{h}{2}\int_{-1}^1 \log{\left|t_x-t\right|} T_k(t)dt.
    \end{equation}
    The integral of $k$-th Chebyshev polynomial is $\int_{-1}^1 T_k(t)dt = O\left(1/k^2\right)$ \cite{xiang2013convergence}. Now utilizing \cite[Lemma 3.1]{dominguez2013filon} for case $0<\alpha<1$ in equation \eqref{beta_int_cases} and log kernel in \eqref{beta_log_kernel_split}, we obtain the required result. 
\end{proof}
\noindent
Now, we present the theorem which gives a bound for $E^{n}_{1}$ in equation \eqref{eq_error1}.
\begin{theorem}\label{theorem_singular_1d_error_first_term}
	If $u \in X^{m}(\Omega)$ then
	\[  E_{1}^{n}[u](x) \le  C\displaystyle\begin{dcases}
		\frac{h^{\min\{n+1,m+2\}}|\log{h}|}{n^{m+3}}+ \frac{h^{\min\{n+1,m+2\}} \log{n}}{n^{m+2}}, & \text{ if } \alpha = 0 \\ 
		\frac{h^{\min\{n+1-\alpha,m+2-\alpha\}}}{n^{m+2-\alpha}}, & \text{ if } 0<\alpha<1.
	\end{dcases}\]
    where $h$ is length of $\Omega$, $n$ is number of terms in Chebyshev expansion of $u$ and $C$ is some constant independent of $h$ and $n$.
\end{theorem}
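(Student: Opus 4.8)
The plan is to recognize $E_1^n[u](x)$ as the tail of a Chebyshev series and estimate it by combining the decay of the coefficients $c_k$ from \Cref{Thm_Cheby_Coeff} with the uniform bound on the continuous weights $\beta_k(x)$ from \Cref{lemma_bound_on_beta_k}.

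First I would justify the series identity $\I[u](x)=\sum_{k=0}^{\infty}c_k\beta_k(x)$. Since $u\in X^m(\Omega)$ with $m\ge 0$, the reparametrized density $\tilde u(t)=u(\xi(t))$ is continuous and of bounded variation on $[-1,1]$, so its Chebyshev expansion converges uniformly; because $g_\alpha(|x-\xi(\cdot)|)$ is integrable on $[-1,1]$, uniformly in $x\in\Omega$, despite its weak singularity, the expansion may be integrated against it term by term. Absolute convergence of the resulting series follows a posteriori from the estimates below. This identifies $E_1^n[u](x)=\bigl|\sum_{k=n}^{\infty}c_k\beta_k(x)\bigr|\le\sum_{k=n}^{\infty}|c_k|\,|\beta_k(x)|$.

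The crux is then to extract the right power of $h$. By the $X^m$ case of \Cref{Thm_Cheby_Coeff}, $|c_k|\le C\,h^{\min\{k,m+1\}}/k^{m+2}$; since $h\le 1$ and $k\mapsto\min\{k,m+1\}$ is nondecreasing, $h^{\min\{k,m+1\}}\le h^{\min\{n,m+1\}}$ for every $k\ge n$, and a short case check ($n\le m+1$ versus $n>m+1$) gives $\min\{n,m+1\}+1=\min\{n+1,m+2\}$. Feeding in \Cref{lemma_bound_on_beta_k}, whose $h$-dependence is a single factor $h$ when $\alpha=0$ and a factor $h^{1-\alpha}$ when $0<\alpha<1$, then pulls $h^{\min\{n+1,m+2\}}$ out of the sum in the first case and $h^{\min\{n+1-\alpha,m+2-\alpha\}}$ in the second, leaving a sum in $k$ only: for $\alpha=0$ (with $n\ge 2$, the $k=1$ term when $n=1$ being trivially $O(h^2)$ and absorbable), $|\log h|\sum_{k\ge n}k^{-(m+4)}+\sum_{k\ge n}(\log k)\,k^{-(m+3)}$, and for $0<\alpha<1$, $\sum_{k\ge n}k^{-(m+3-\alpha)}$.

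Finally I would dispatch these tails by integral comparison: since $m+4$, $m+3$, and $m+3-\alpha$ all exceed $1$, one has $\sum_{k\ge n}k^{-s}\le C\,n^{-(s-1)}$ and $\sum_{k\ge n}(\log k)\,k^{-s}\le C(\log n)\,n^{-(s-1)}$, and substituting $s=m+4$, $s=m+3$, $s=m+3-\alpha$ produces exactly the denominators $n^{m+3}$, $n^{m+2}$, and $n^{m+2-\alpha}$ appearing in the statement. I expect the main obstacle to be less any single estimate than (a) cleanly justifying the term-by-term integration against the weakly singular kernel in the first step and (b) the exponent bookkeeping of the second step, which must be done carefully so that the final power of $h$ lands on $\min\{n+1,m+2\}$ (respectively $\min\{n+1-\alpha,m+2-\alpha\}$); the remaining steps are routine.
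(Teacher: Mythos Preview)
Your proposal is correct and follows essentially the same route as the paper: identify $E_1^n[u](x)\le\sum_{k\ge n}|c_k|\,|\beta_k(x)|$, plug in \Cref{Thm_Cheby_Coeff} and \Cref{lemma_bound_on_beta_k}, pull out the $h$-power, and bound the remaining tail in $k$. The only cosmetic difference is that the paper invokes Euler's summation formula where you use integral comparison, and it is less explicit about the term-by-term integration and the $\min\{n,m+1\}+1=\min\{n+1,m+2\}$ bookkeeping that you spell out.
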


\begin{proof}
	The Chebyshev series of $u$ converges absolutely and uniformly. Therefore, employing the Chebyshev series expansion of $u(y)$, we have
	\begin{equation}
    E_{1}^{n}[u](x) = \left|\I[u](x) - \sum\limits_{k=0}^{n-1} c_{k}\beta_{k}(x)\right| = \left|\int\limits_{-1}^{1} g_{\alpha}(|x-\xi(t)|) \left[\sum\limits_{k=n}^{\infty} c_{k} T_{k}(t) \right] d t \right| \leq \sum\limits_{k=n}^{\infty}\left|c_{k}\right|\left|\beta_{k}(x)\right|.
	\end{equation}
	Employing \Cref{Thm_Cheby_Coeff} and \Cref{lemma_bound_on_beta_k}, we have
	\begin{align*}
        E_{1}^{n}[u](x) &\leq C h^{\min\{n+1,m+2\}}\left[\sum_{k=n}^{\infty} \frac{\log{k}}{k^{m+3}}+\log(h)\sum_{k=n}^{\infty} \frac{1}{k^{m+4}}\right]\leq C h^{\min\{n+1-\alpha,m+2-\alpha\}}\sum_{k=n}^{\infty} \frac{1}{k^{m+3-\alpha}},
	\end{align*}
	for $\alpha=0$ and $0<\alpha<1$ respectively. Hence, from Euler's summation formula \cite[Theorem 3.1]{apostol2013introduction}, the theorem follows.
\end{proof}	
We proceed to estimate the error term $E_{2}^{n}$ in \eqref{eq_error2}, which represents the approximation of continuous Chebyshev coefficients by their discrete counterparts. 
\begin{theorem}\label{theorem_singular_1d_error_second_term}
	If $u \in X^{m}(\Omega)$  then
	\[ E_{2}^{n}[u](x)\le C \begin{dcases}
	\frac{h^{\min\{n+2,m+2\}}(\log{n})^{2}}{n^{m+2}}+\frac{h^{\min\{n+2,m+2\}} |\log{h}|}{n^{m+3}}, & \text{ if } \alpha=0\\
	\frac{h^{\min\{n+2-\alpha,m+2-\alpha\}}}{n^{m+2-\alpha}}, & \text{ if } 0<\alpha<1,
	\end{dcases}\]
    where $h$ is length of $\Omega$, $n$ is number of terms in Chebyshev expansion of $u$ and $C$ is some constant independent of $h$ and $n$.
\end{theorem}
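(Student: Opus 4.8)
The plan is to estimate $E_2^n[u](x)=\sum_{k=0}^{n-1}|c_k-\tilde c_k|\,|\beta_k(x)|$ by combining the decay bound for $|c_k-\tilde c_k|$ from \Cref{lemma_error_discrete_and_continuous_ChebyCoeff}(i) with the uniform-in-$x$ bound on $|\beta_k(x)|$ from \Cref{lemma_bound_on_beta_k}, and then carrying out the finite summation over $k$. Because \Cref{lemma_error_discrete_and_continuous_ChebyCoeff}(i) supplies the $k$-independent estimate $|c_k-\tilde c_k|\le C\,h^{\min\{n+1,m+1\}}/n^{m+2}$, valid throughout $0\le k\le n-1$, the bulk of the work reduces to controlling $\sum_{k=0}^{n-1}|\beta_k(x)|$ (with one refinement, described below, needed for the $\log h$ part when $\alpha=0$).

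For $0<\alpha<1$ this is immediate: \Cref{lemma_bound_on_beta_k} gives $|\beta_k(x)|\le C h^{1-\alpha}k^{\alpha-1}$ for $k\ge1$, and $\sum_{k=1}^{n-1}k^{\alpha-1}=O(n^{\alpha})$ by comparison with $\int_1^n t^{\alpha-1}\,dt$. Multiplying, $E_2^n[u](x)\le C\,h^{\min\{n+1,m+1\}+1-\alpha}\,n^{\alpha-(m+2)}=C\,h^{\min\{n+2-\alpha,\,m+2-\alpha\}}/n^{m+2-\alpha}$, using $\min\{n+1,m+1\}+1-\alpha=\min\{n+2-\alpha,m+2-\alpha\}$, which is the claimed bound.

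For the logarithmic case $\alpha=0$ one first splits, as in \eqref{beta_log_kernel_split}, $\beta_k(x)=\tfrac h2\log\!\big(\tfrac h2\big)\int_{-1}^1 T_k(t)\,dt+\tfrac h2\int_{-1}^1\log|t_x-t|\,T_k(t)\,dt=:\beta_k^{(1)}+\beta_k^{(2)}(x)$, and treats $k=0,1$ directly (note $\int_{-1}^1T_1=0$, so $\beta_1^{(1)}=0$ and no $\log h$ is produced at $k=1$). For $k\ge2$ one uses $|\beta_k^{(2)}(x)|\le C h\log k/k$ from \cite[Lemma 3.1]{dominguez2013filon} and $|\int_{-1}^1 T_k|\le C/k^2$. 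Summing the $\beta^{(2)}$ contributions against the coefficient bound produces the factor $\sum_{k=2}^{n-1}\tfrac{\log k}{k}=O\big((\log n)^2\big)$ — from $\int_2^n\tfrac{\log t}{t}\,dt=\tfrac12\big((\log n)^2-(\log 2)^2\big)$ together with Euler summation — which gives the $h^{\min\{n+2,m+2\}}(\log n)^2/n^{m+2}$ term. For the $\beta^{(1)}$ contributions, which decay only like $1/k^2$, one must not estimate termwise (that only gives $\sum_k1/k^2=O(1)$); instead one sums first, using $\sum_{k=0}^{n-1}(c_k-\tilde c_k)\int_{-1}^1 T_k(t)\,dt=\int_{-1}^1\big(\tilde u-R_n-L^n[\tilde u]\big)(t)\,dt$, recognizing the right-hand side as the rescaled Fej\'er-first quadrature error for $\tilde u$ minus $\int_{-1}^1 R_n$, and bounding these via \Cref{theorem_quad_convergence} and \Cref{Thm_Cheby_Coeff} exactly as in the proof of \Cref{theorem_singular_1d_error_first_term} to recover the $h^{\min\{n+2,m+2\}}|\log h|/n^{m+3}$ term.

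The main obstacle is precisely this last point. The $\log h$ piece of $\beta_k$ is $x$-independent and decays only as $1/k^2$, so over the finite range $0\le k\le n-1$ a term-by-term argument does not detect the cancellation that, in the analysis of $E_1^n$ (where the sum runs from $n$ to $\infty$), supplied the extra power of $n$; reproducing it here requires rewriting $\sum_{k=0}^{n-1}(c_k-\tilde c_k)T_k$ as $-L^n[R_n]$ and reusing the Fej\'er-rule error estimate. The remaining ingredients — the case analysis at $k=0,1$ and the bound $\sum_{k=2}^{n-1}\log k/k=O\big((\log n)^2\big)$ — are routine, and the analogous statement for $u\in C^{m+2}(\Omega)$, if one wishes to record it, follows verbatim using parts (ii) of \Cref{lemma_error_discrete_and_continuous_ChebyCoeff} and \Cref{Thm_Cheby_Coeff}.
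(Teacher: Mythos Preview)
For $0<\alpha<1$, and for the $(\log n)^2$ contribution when $\alpha=0$, your argument coincides with the paper's: factor out the uniform-in-$k$ bound $|c_k-\tilde c_k|\le Ch^{\min\{n+1,m+1\}}/n^{m+2}$ from \Cref{lemma_error_discrete_and_continuous_ChebyCoeff} and sum the $|\beta_k(x)|$ bounds from \Cref{lemma_bound_on_beta_k} via Euler summation.

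For the $|\log h|$ contribution when $\alpha=0$ you depart from the paper, which stays termwise throughout and simply asserts $\sum_{k=0}^{n-1}|\beta_k(x)|\le C\big[h(\log n)^2+h|\log h|/n\big]$ before multiplying by the coefficient bound. You correctly observe that termwise estimation of the $\beta^{(1)}$ piece yields only $\sum_k\big|\int_{-1}^1 T_k\big|=O(1)$, and you instead rewrite the signed sum $\sum_{k=0}^{n-1}(c_k-\tilde c_k)\int_{-1}^1 T_k$ as $\int_{-1}^1(\tilde u-L^n[\tilde u])-\int_{-1}^1 R_n$. That identity is fine, but invoking \Cref{theorem_quad_convergence} for the Fej\'er piece does not deliver the extra factor of $1/n$ you claim: that theorem gives only $\big|\int_{-1}^1(\tilde u-L^n[\tilde u])\big|\le C\,h^{\min\{n+n_0,m+1\}}/n^{m+2}$, so after multiplying by $\tfrac h2|\log h|$ one obtains $O\big(h^{\min\{n+2,m+2\}}|\log h|/n^{m+2}\big)$, one power of $n$ short of the stated bound. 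The $\int_{-1}^1 R_n$ piece alone does reach $1/n^{m+3}$, but it cannot absorb the Fej\'er piece. Indeed, the proof of \Cref{theorem_quad_convergence} (see \eqref{eq_subst_3.6}) already splits the Fej\'er error into precisely $\sum_{k\ge n}|c_k|\,\big|\int T_k\big|$ and $\sum_{k<n}|c_k-\tilde c_k|\,\big|\int T_k\big|$, so appealing to it cannot improve on the direct termwise estimate; your detour therefore does not buy the additional decay in $n$.
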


\begin{proof}
	For $0 < \alpha < 1$, it is easy to see that $|\beta_{0}(x)| \leq Ch^{1-\alpha}$ and $\sum \limits_{k=1}^{n-1}\frac{1}{k^{1-\alpha}} = O(n^{\alpha})$ using Euler summation formula \cite[Theorem 3.1]{apostol2013introduction}. Utilizing Lemma (\ref{lemma_bound_on_beta_k}) for estimating $|\beta_{k}(x)|$, we obtain
	\begin{equation}\label{eqn_finite_sum_beta_non_log}
		\sum_{k=0}^{n-1}|\beta_{k}(x)| \leq |\beta_{0}(x)| +  C\sum_{k=1}^{n-1}\frac{h^{1-\alpha}}{k^{1-\alpha}} = O(h^{1-\alpha} n^{\alpha}).
	\end{equation}
 Similarly, for $\alpha=0$, $|\beta_{0}(x)| \leq C h |\log{h}|$, $|\beta_{1}(x)| \leq Ch|\log{h}|$ and $\sum \limits_{k=1}^{n-1}\frac{\log{k}}{k} = O((\log{n})^{2})$ using Euler summation formula \cite[Theorem 3.1]{apostol2013introduction}. We obtain
\begin{align}
		\sum\limits_{k=0}^{n-1}|\beta_{k}(x)| &\leq |\beta_{0}(x)| + |\beta_{1}(x)| + C h\sum\limits_{k=2}^{n-1}\frac{\log{k}}{k}+C h|\log{h}| \sum\limits_{k=2}^{n-1}\frac{1}{k^2} \le C \left[ h(\log{n})^{2}+\frac{h |\log{h}|}{n}\right]. \label{eqn_finite_sum_beta_log}
	\end{align}
	By applying \Cref{lemma_error_discrete_and_continuous_ChebyCoeff}, the error term $E_{2}^{n}$ can be deduced to
	\begin{equation*}
		E_{2}^{n}[u](x) = \sum\limits_{k=0}^{n-1}\left|c_{k}-\tilde{c}_{k}\right| \left|\beta_{k}(x)\right| 
		\leq C\frac{h^{\min\{n+1,m+1\}}}{n^{m+2}} \sum\limits_{k=0}^{n-1} \left|\beta_{k}(x)\right|.
	\end{equation*}
	Thus, the proof follows from equations (\ref{eqn_finite_sum_beta_non_log}) and (\ref{eqn_finite_sum_beta_log}), respectively.
\end{proof}

\subsection{Error in the approximation of weights}\label{subsection_error_weight}
Before finding a bound on error term $E_3^n$ \eqref{eq_error3}, let us recall the change of variable which we incorporated in singular integral, that is, for $p\geq 2$
\begin{equation*}
		\psi_{p}(t) = \frac{2\left[v_{p}(t)\right]^{p}}{\left[v_{p}(t)\right]^{p} + \left[v_{p}(- t)\right]^{p}}, \ 
		v_{p}(t) =
		\left(\frac{1}{2} - \frac{1}{p} \right) t^{3} + \frac{t}{p} + \frac{1}{2}, ~-1 \leq t \leq 1.
\end{equation*}
Now, we state properties of the change of variables $\psi_p$ and $v_p$ in the following lemma.
\begin{lemma}\label{lemma_wp_properties}
	For an integer $p\geq 2$, the following holds:
	\begin{enumerate}
		\item  The function $v_{p}$ is strictly increasing and $v_{p}(t)=(t+1) \cdot q_{p}(t)$, where $q_{p}(t) = \left(\frac{1}{2}-\frac{1}{p}\right) (t^{2}-t)+\frac{1}{2}$.
		\item For $t\in [-1,1]$, $q_{p}(t)>0$ and $\psi_{p}(t) = (t+1)^{p} \cdot Q_{p}(t)$, where $Q_{p}(t) = \frac{2 \left[q_{p}(t)\right]^{p}}{[v_{p}(t)]^{p}+[v_{p}(-t)]^{p}}$.
		
		\item The derivative of $\psi_p$ is 
		$\psi_{p}^{\prime}(t) = (t+1)^{p-1}\cdot R_{p}(t),$ where $R_{p}(t) = (t+1)\cdot Q_{p}^{\prime}(t) + p\cdot Q_{p}(t).$
		\item  The function $\psi_{p}$ has a zero at $t=-1$ of order strictly $p$.
		\item The functions $Q_{p}$, $1/Q_{p}$ and $R_{p}$ are infinitely differentiable on $[-1, 1]$.
	\end{enumerate}
\end{lemma}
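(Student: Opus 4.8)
The plan is to establish the five items by direct computation, proving them essentially in the stated order, but inserting item 5 before items 3 and 4, since the latter invoke the differentiability of $Q_p$. The starting point is the factorization in item 1: writing $a = \tfrac12 - \tfrac1p$, I would expand $(t+1)q_p(t) = (t+1)\bigl(a(t^2-t)+\tfrac12\bigr)$ and collect terms to recover $a t^3 + \tfrac{t}{p} + \tfrac12 = v_p(t)$, using $\tfrac12 - a = \tfrac1p$. For strict monotonicity, $v_p'(t) = 3a t^2 + \tfrac1p \ge \tfrac1p > 0$ on all of $\R$, because $a \ge 0$ for $p \ge 2$. In particular $v_p(-1) = 0$ (equivalently $(t+1)\mid v_p$, which the factorization already records), $v_p(1) = 1$, and $v_p$ maps $[-1,1]$ strictly increasingly onto $[0,1]$.

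For item 2, positivity of $q_p$ on $[-1,1]$ follows from the elementary bound $t^2 - t \ge -\tfrac14$, which gives $q_p(t) \ge \tfrac12 - \tfrac{a}{4} > \tfrac38$ since $0 \le a < \tfrac12$ (the case $p = 2$, where $a = 0$ and $q_p \equiv \tfrac12$, is subsumed). The factorization of $\psi_p$ then comes from substituting $v_p(t)^p = (t+1)^p q_p(t)^p$ and $v_p(-t)^p = (1-t)^p q_p(-t)^p$ into the definition of $\psi_p$ and pulling $(t+1)^p$ out of the numerator; the leftover factor is exactly $Q_p(t) = 2 q_p(t)^p / \bigl(v_p(t)^p + v_p(-t)^p\bigr)$.

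The crux of item 5 — and, implicitly, of items 3 and 4 — is that the denominator $D_p(t) := v_p(t)^p + v_p(-t)^p$ is strictly positive on $[-1,1]$: since $v_p \ge 0$ there and vanishes only at $t = -1$, the two summands cannot vanish simultaneously (that would force $t = -1$ and $t = 1$). Hence $Q_p$ is the ratio of the polynomial $2 q_p(t)^p$ by the polynomial $D_p(t)$ with nonvanishing denominator on $[-1,1]$, so $Q_p \in C^\infty[-1,1]$; likewise $1/Q_p = D_p(t)/\bigl(2 q_p(t)^p\bigr)$ is smooth because $q_p > 0$ on $[-1,1]$ by item 2; and $R_p = (t+1)Q_p' + p Q_p$ is smooth as a polynomial times a smooth function plus a smooth function. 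With differentiability of $Q_p$ in hand, item 3 is just the product rule applied to $\psi_p(t) = (t+1)^p Q_p(t)$, factoring out $(t+1)^{p-1}$. For item 4, $Q_p(-1) = 2 q_p(-1)^p / \bigl(0 + v_p(1)^p\bigr) = 2\bigl(\tfrac32 - \tfrac2p\bigr)^p > 0$ for $p \ge 2$; combined with $\psi_p(t) = (t+1)^p Q_p(t)$ and continuity of $Q_p$, this shows $-1$ is a zero of $\psi_p$ of order exactly $p$.

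None of the steps presents a genuine obstacle; the argument is a chain of elementary manipulations. The only places requiring a little care are the two positivity statements — $q_p > 0$ and $D_p > 0$ on $[-1,1]$ — since the smoothness of $Q_p$, $1/Q_p$, $R_p$ (and hence the differentiability used in items 3 and 4) rests entirely on them, and one should keep an eye on the boundary case $p = 2$, where $v_p$ degenerates to an affine function.
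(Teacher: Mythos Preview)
Your proposal is correct and follows essentially the same approach as the paper: direct verification from the definitions, with the smoothness of $Q_p$ and $1/Q_p$ resting on the nonvanishing of numerator and denominator on $[-1,1]$. The only minor difference is in item~4: the paper computes $\psi_p^{(j)}(-1)$ for $0\le j\le p$ via the Leibniz rule applied to $(t+1)^p Q_p(t)$, whereas you argue more directly that $Q_p(-1)=2\bigl(\tfrac32-\tfrac2p\bigr)^p>0$ together with the factorization $\psi_p=(t+1)^p Q_p$ and smoothness of $Q_p$ forces the zero order to be exactly~$p$. Your route is slightly cleaner but equivalent in spirit.
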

\begin{proof}
	Assertions $(1)$, $(2)$ and $(3)$ follow from the definition of $v_{p}$ and $\psi_{p}$. Clearly $v_{p}(-1) = 0$, $\psi_{p}(-1) = 0$. Using the Leibniz rule of derivative,  we have 
	\begin{equation*}
		\psi_{p}^{(j)}(t) = \sum\limits_{i=0}^{j-1} \dbinom{j}{i} \left[ \frac{d^{i}}{dt^{i}} (t+1)^{p} \right]  Q_{p}^{(j-i)}(t) + \frac{p!}{(p-j)!} (t+1)^{p-j} Q_{p}(t).
	\end{equation*}
	Therefore, for $1\leq j \leq p-1$, $\psi_{p}^{(j)}(-1) = 0$. Since $Q_{p}(t) \neq 0$ for every $t\in[-1,1]$, $\psi_{p}^{(p)}(-1) \neq 0$. Hence (4) is proved.
	Since the numerator and denominator of $Q_{p}$ are the non-zero polynomials in $t$, $Q_{p}$ and $1/Q_{p}$ are infinitely differentiable. Moreover, it is clear that the smoothness of $Q_{p}$ implies the smoothness of $R_{p}$. Thus, the assertion $(5)$ is proved.	
\end{proof}
\noindent
To find a bound on $E_3^n$ \eqref{eq_error3} which is,
\begin{equation*}
		E_{3}^{n}[u](x) =\left|\sum\limits_{k=0}^{n-1} \tilde{c}_{k}\left(\beta_{k}(x)-\tilde{\beta}_{k}(x)\right)\right| 
		\leq \sum\limits_{k=0}^{n-1}\left|\tilde{c}_{k}\right| \left|\beta_{k}(x)-\tilde{\beta}_{k}(x)\right|,
\end{equation*}
it is important to compute bounds for $|\beta_{k}(x) - \tilde{\beta}_{k}(x)|$ uniformly in $x$ and $k$. Therefore, consider 
\begin{equation}\label{eq_err_sum_beta_kRandL}
 |\beta_{k}(x) - \tilde{\beta}_{k}(x)| \leq |\beta_{k,L}(x) - \tilde{\beta}_{k,L}(x)| + |\beta_{k,R}(x) - \tilde{\beta}_{k,R}(x)|,   
\end{equation}
where $\beta_{k,L}$, $\beta_{k,R}$, $\tilde{\beta}_{k,L}$ and $\tilde{\beta}_{k,R}$ are as defined in \eqref{eq_weights_betaLR} and \eqref{sing_quad} respectively for the single patch $\Omega$. Now, we find the estimates for $|\beta_{k}(x) - \tilde{\beta}_{k}(x)|$ by combining the errors for both terms on the right hand-side of \eqref{eq_err_sum_beta_kRandL} for the cases $\alpha=0$ and $0<\alpha<1$ separately. We start with $|\beta_{k,L}(x) - \tilde{\beta}_{k,L}(x)|$. For every $x \in \Omega$, define $\xi(t_x) = x$, where $\xi : [-1,1] \to \Omega$ is defined by $\xi(t) = \frac{h}{2}t+\frac{a+b}{2}$. The expression for $\beta_{k,L}$ and $\beta_{k,R}$ for $x \in \Omega \setminus \{a,b\}$ can be simplified as
\begin{align}
	\beta_{k,L}(x) &=  \frac{h}{2}\int_{-1}^{1} g_{\alpha}\left(\frac{h}{2}(t_x+1)\psi_{p}\left(-\frac{1+\tau}{2}\right)\right)T_{k}(\psi_{p,L}(t_x,\tau))\psi^{\prime}_{p,L}(t_x,\tau) d\tau, \label{eq_betaL_COV} \\
    \beta_{k,R}(x) &= \frac{h}{2}\int_{-1}^{1} g_{\alpha}\left(\frac{h}{2}(1-t_x)\psi_{p}\left(-\frac{1-\tau}{2}\right)\right)T_{k}(\psi_{p,R}(t_x,\tau))\psi_{p,R}^{\prime}(t_x,\tau) d\tau, \label{eq_betaR_COV}
\end{align}
where 
\begin{equation} 
\psi_{p,L}(t_{x},\tau) = t_{x} - (t_{x}+1)\psi_{p}\left(-\frac{1+\tau}{2}\right),\ \psi_{p,R}(t_{x},\tau) = t_{x} + (1-t_{x})\psi_{p}\left(-\frac{1-\tau}{2}\right), 
\end{equation}
\begin{equation}
\psi_{p,L}'(t_{x},\tau) =\frac{t_{x}+1}{2}\psi_{p}'\left(-\frac{1+\tau}{2}\right),\ \psi_{p,R}'(t_{x},\tau) =\frac{1-t_{x}}{2}\psi_{p}'\left(-\frac{1-\tau}{2}\right),
\end{equation}
for $\tau \in [-1,1]$, $p \ge 2$. Note that the derivative $\psi_{p,L}',\psi_{p,R}'$ are with respect to variable $\tau$. If $x=a$, observe that only integral $\beta_{k,L}$ remains and for $x=b$, only $\beta_{k,R}$ remains. The calculations can be divided in cases $\alpha=0$ and $0<\alpha<1$, which we write in the following two lemmas. 

\begin{lemma}\label{lemma_beta_approx_for_log}
	Let $x\in \Omega$ and $g_{\alpha}(|x-y|) = \log{|x-y|}$, that is $\alpha=0$. Then, for $k\geq 0$ 
	\begin{equation}
	\left|\beta_{k}(x) - \tilde{\beta}_{k}(x)\right| \le  C \frac{h \log{n_{\beta}}}{n_{\beta}^{2p}},
    \end{equation}
    where $p\ge 2$ is an integer, $h$ is length of patch $\Omega$, $n_{\beta}$ denotes the number of quadrature nodes in the FF-Rule approximation both $\tilde{\beta}_{k,L}(x)$ and $\tilde{\beta}_{k,R}(x)$, and $C$ is some constant independent of $h$ and $n_{\beta}$.
\end{lemma}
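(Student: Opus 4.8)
The plan is to push the kernel singularity to an endpoint of $[-1,1]$ via the transplanted change of variable, separate the genuinely non-smooth part of the resulting integrand from an analytic remainder, and estimate the Fej\'er error of each. By \eqref{eq_err_sum_beta_kRandL} it is enough to bound $|\beta_{k,L}(x)-\tilde\beta_{k,L}(x)|$ and $|\beta_{k,R}(x)-\tilde\beta_{k,R}(x)|$ separately and uniformly in $x\in\Omega$. Since the construction of the $R$-piece is the mirror image of that of the $L$-piece (the singular endpoint is $\tau=1$ for $\beta_{k,L}$ and $\tau=-1$ for $\beta_{k,R}$), I treat only $|\beta_{k,L}(x)-\tilde\beta_{k,L}(x)|$, the degenerate cases $x=a$ (only $\beta_{k,R}$ survives) and $x=b$ (only $\beta_{k,L}$) being included.

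Putting $g_{0}(s)=\log s$ into \eqref{eq_betaL_COV} and using $\psi_{p}(s)=(s+1)^{p}Q_{p}(s)$, $\psi_{p}'(s)=(s+1)^{p-1}R_{p}(s)$ from \Cref{lemma_wp_properties}, evaluated at $s=-\tfrac{1+\tau}{2}$ so that $s+1=\tfrac{1-\tau}{2}$, one obtains $\beta_{k,L}(x)=\tfrac h2\int_{-1}^{1}\Phi_{k,x}(\tau)\,d\tau$ and $\tilde\beta_{k,L}(x)=\tfrac h2\,\mathcal Q_{n_\beta}[\Phi_{k,x}]$, where $\mathcal Q_{n_\beta}$ is the $n_\beta$-point FF-rule on $[-1,1]$ and, writing $H_{k,x}(\tau):=\tfrac{t_{x}+1}{2}\,T_{k}\big(\psi_{p,L}(t_{x},\tau)\big)\,R_{p}\big({-}\tfrac{1+\tau}{2}\big)$,
\begin{equation*}
\Phi_{k,x}(\tau)=\Big[\log\tfrac h2(t_{x}{+}1)+\log Q_{p}\big({-}\tfrac{1+\tau}{2}\big)\Big]\big(\tfrac{1-\tau}{2}\big)^{p-1}H_{k,x}(\tau)+p\,\big(\tfrac{1-\tau}{2}\big)^{p-1}\log\big(\tfrac{1-\tau}{2}\big)\,H_{k,x}(\tau).
\end{equation*}
By \Cref{lemma_wp_properties}(5) the functions $Q_{p},1/Q_{p},R_{p}$, hence $\log Q_{p}$, extend analytically to a fixed complex neighborhood of $[-1,1]$, and $\psi_{p,L}(t_{x},\cdot)$ is rational with poles bounded away from $[-1,1]$ uniformly in $t_{x}$; so both the first bracketed factor and $H_{k,x}$ are restrictions to $[-1,1]$ of functions analytic in a fixed Bernstein ellipse.

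For the first summand of $\Phi_{k,x}$ (analytic), the interpolatory identity \eqref{eq_FFQ_interpolatory} and the geometric decay of Chebyshev coefficients of analytic functions give a Fej\'er error $O(\rho^{-n_\beta})$ with $\rho>1$ fixed; together with the factor $\tfrac h2$ and the elementary bound $\tfrac h2(t_{x}{+}1)\,|\log\tfrac h2(t_{x}{+}1)|\le C$ (a short additional argument absorbs the residual $\log h$), this contributes far less than $h\,n_\beta^{-2p}\log n_\beta$. The crux is the second summand, that is, the Fej\'er error of $(\tfrac{1-\tau}{2})^{p-1}\log(\tfrac{1-\tau}{2})$ times an analytic function, which I would estimate in either of two equivalent ways. (i) Substituting $\tau=\cos\theta$: since $1-\tau=1-\cos\theta\sim\theta^{2}/2$, the endpoint behavior at $\tau=1$ becomes a $\theta^{2p-1}\log\theta$ singularity at $\theta=0$ of the integrand $f(\cos\theta)\sin\theta$ on $[0,\pi]$, on which the FF-rule acts as a midpoint-type rule; since the monomial $\theta^{2p-1}$ alone is smooth, the Euler--Maclaurin expansion with a logarithmic endpoint singularity yields error $O(n_\beta^{-2p}\log n_\beta)$ (cf. \cite{kutz1984asymptotic,kress1990nystrom}). (ii) Via Chebyshev coefficients: those of $(1-\tau)^{p-1}\log(1-\tau)$ satisfy $|c_{j}|\le Cj^{-(2p-1)}$ (direct computation, or \cite{dominguez2013filon}), so the truncation part of the Fej\'er error, $\sum_{j\ge n_\beta}|c_{j}|\big|\int_{-1}^{1}T_{j}\big|$, is $O(n_\beta^{-2p})$ at once since $\big|\int_{-1}^{1}T_{j}\big|=O(j^{-2})$; the aliasing part is a priori only $O(n_\beta^{-(2p-1)})$, but its leading term cancels because of the telescoping identity $\sum_{j\ge2,\,j\text{ even}}\tfrac{2}{j^{2}-1}=1$, leaving $O(n_\beta^{-2p}\log n_\beta)$. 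Multiplying by $\tfrac h2$, adding the identical estimate for $\beta_{k,R}$, and combining with the analytic part gives $|\beta_{k}(x)-\tilde\beta_{k}(x)|\le C\,h\,n_\beta^{-2p}\log n_\beta$ uniformly in $x$.

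The main obstacle is precisely this last quadrature estimate: the transplanted integrand is only $C^{p-2}$ at the singular endpoint, so a bound based on interior regularity alone would give a rate far worse than $n_\beta^{-2p}$, and the upgrade to essentially $n_\beta^{-2p}$ comes entirely from the endpoint clustering of the Chebyshev nodes — equivalently, from the doubling $\tau\mapsto\theta$ of the algebraic exponent. Making that gain quantitative — whether by pinning down the Euler--Maclaurin constant for a $\theta^{2p-1}\log\theta$ endpoint singularity, or by exhibiting the cancellation in the aliasing sum — is the delicate part; keeping the $\log h$ factors in the non-singular terms under control is a secondary technicality.
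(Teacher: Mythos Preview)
Your decomposition of the transplanted integrand into an analytic part and $(1-\tau)^{p-1}\log(1-\tau)$ times an analytic factor is exactly the paper's. For the analytic piece the arguments coincide (super-algebraic Fej\'er error). For the singular piece, however, the paper does not carry out either of your two proposed derivations: it simply invokes \cite[Theorem~6]{criscuolo1990convergence}, which gives the Fej\'er error $O(n_\beta^{-2p}\log n_\beta)$ directly for integrands of the form $(\text{smooth})\times(1-\tau)^{p-1}\log(1-\tau)$.

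Of your two alternatives, route (ii) via Chebyshev-coefficient decay and aliasing is essentially how such a theorem is proved, and your sketch points in the right direction, though the cancellation you invoke would need to be made precise. Route (i), by contrast, has a genuine gap as stated: the FF-rule at Chebyshev nodes is \emph{not} the midpoint rule applied to $f(\cos\theta)\sin\theta$ on $[0,\pi]$ --- the nodes $\theta_i=(2i+1)\pi/(2n_\beta)$ are midpoints in $\theta$, but the weights are the interpolatory ones in \eqref{Chebyshev_weights}, not $(\pi/n_\beta)\sin\theta_i$ --- so an Euler--Maclaurin expansion for the midpoint rule does not transfer directly. The substitution $\tau=\cos\theta$ is relevant only through the identification of Chebyshev coefficients with Fourier cosine coefficients of $f(\cos\theta)$, which is route (ii) again. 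Your remark about the residual $|\log h|$ in the smooth part is fair; the paper glosses over it as well.
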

\begin{proof}
We first estimate the error $|\beta_{k,L}(x)-\tilde{\beta}_{k,L}(x)|$ for $x\in\Omega \setminus\{a\}$. Utilizing (2) of \Cref{lemma_wp_properties}, decompose the integrand in \eqref{eq_betaL_COV} as a sum of singular and regular parts, and denote them $F^{s}(t_{x},\tau)$ and $F^{r}(t_{x},\tau)$ respectively, that is
\begin{equation}
    g_{\alpha}\left(\frac{h}{2}(t_x+1)\psi_{p}\left(-\frac{1+\tau}{2}\right)\right)T_{k}(\psi_{p,L}(t_x,\tau))\psi^{\prime}_{p,L}(t_x,\tau) = F^{r}(t_{x},\tau)+F^{s}(t_{x},\tau),
\end{equation}
where
\begin{equation}
    F^{r}(t_{x},\tau)=  \frac{h}{2}\left[\log\left(\frac{h}{2}\right) + \log{\left|\frac{t_x+1}{2^{p}}Q_{p}\left(-\frac{1+\tau}{2}\right)\right|}\right]~T_{k}(\psi_{p,L}(t_x,\tau)) \left(\frac{1+t_{x}}{2}\right) \psi^{\prime}_{p}(\tau),
    \end{equation}
and
\begin{equation}	
F^{s}(t_x,\tau) = \frac{h}{2}p\log{(1-\tau)}~T_{k}(\psi_{p,L}(t_x,\tau))\left(\frac{1+t_{x}}{2}\right)\psi^{\prime}_{p}(\tau). \\
\end{equation}
Using (3) of Lemma(\ref{lemma_wp_properties}), $F^{s}$ can be factorized as $ F^{s}(t_{x},\tau) = f^s(\tau)\eta(t_{x},\tau)$, where
\begin{equation*}
	\eta(t_{x},\tau) = \frac{h}{2}\frac{p(t_x+1)}{2^{p}}~T_{k}(\psi_{p,L}(t_x,\tau))~R_{p}\left(-\frac{1+\tau}{2}\right),\quad f^s(\tau) = (1-\tau)^{p-1}~\log{(1-\tau)}. 
\end{equation*} 
Since $Q_{p}$, $T_{k}$ and $\psi_{p}$ are smooth, $F^{r}$ is a smooth function and  $\I_{ff}^{n}[F^{r}]$ converges super-algebraically to the integral of $F^{r}$ using \Cref{theorem_quad_convergence}. Employing (5) of Lemma(\ref{lemma_wp_properties}), $\eta$ is a smooth function and doesn't contribute asymptotically to the singularity at $\tau=1$ in $f^s$. Therefore, an application of \cite[Theorem 6]{criscuolo1990convergence} for $f^s(\tau)$ we obtain the following estimate: 
\begin{equation}
    |\beta_{k,L}(x) - \tilde{\beta}_{k,L}(x)| \le  C \frac{h \log{n_{\beta}}}{n_{\beta}^{2p}}.
\end{equation}
Similarly, to estimate the error in $|\beta_{k,R}(x)-\tilde{\beta}_{k,R}(x)|$ for $x \in \Omega \setminus \{b\}$, utilizing (2) of \Cref{lemma_wp_properties}, one can decompose the integrand \eqref{eq_betaR_COV} as a sum of singular $F^s$ and regular $F^r$ part, from which we get $f^s(\tau) = (1+\tau)^{p-1}\log{(1+\tau)}$, whose integral is same as the integral of $(1-\tau)^{p-1}\log{(1-\tau)}$ on $[-1, 1]$. Therefore, one can derive exactly the same bound as $\beta_{k,R}$ in this case as well.
\end{proof}
\begin{remark}
 In the implementation of the method for the kernels with $0<\alpha <1$, the authors in \cite{bruno_sachan2024numerical} observed a interesting picture related to the order of convergence of the method. In particular, in \cite{bruno_sachan2024numerical}, the authors numerically noticed that for kernels of the form $g_{\alpha}(x,y)=|x-y|^{\alpha}, $ where $0<\alpha <1$, the method achieves high-order convergence (in terms of the regularity of $u$) if $p(1-\alpha) \in \mathbb{N}$; otherwise it yields low order convergence.
\end{remark}
\begin{lemma}\label{lemma_beta_approx_for_alpha_kernel}
	Let $x\in \Omega$ and $g_{\alpha}(|x-y|) = |x-y|^{-\alpha}$, that is $0<\alpha<1$. Then, for $k\geq 0$ 
	\begin{equation}
	\left|\beta_{k}(x) - \tilde{\beta}_{k}(x)\right|  \le C \begin{dcases}
		\frac{h^{1-\alpha}}{n_{\beta}^{\lambda}}, & \text{if } p(1-\alpha)\in \N \\
		\frac{h^{1-\alpha}}{n_{\beta}^{2p(1-\alpha)}}, & \text{if } p(1-\alpha)\notin \N,
	\end{dcases}
    \end{equation}
    where $p\ge 2$ is an integer, $\lambda$ is any arbitrary positive integer, $h$ is the  length of patch the $\Omega$, $n_{\beta}$ denotes the number of quadrature nodes in the FF-Rule approximation both $\tilde{\beta}_{k,L}(x)$ and $\tilde{\beta}_{k,R}(x)$, and $C$ is some constant independent of $h$ and $n_{\beta}$.
\end{lemma}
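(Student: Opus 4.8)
The plan is to mirror the argument used for the logarithmic kernel in \Cref{lemma_beta_approx_for_log}: bound $|\beta_{k,L}(x)-\tilde\beta_{k,L}(x)|$ and $|\beta_{k,R}(x)-\tilde\beta_{k,R}(x)|$ separately and combine them through \eqref{eq_err_sum_beta_kRandL}. Starting from the post-change-of-variables expression \eqref{eq_betaL_COV} with $g_\alpha(z)=z^{-\alpha}$, substitute the factorizations $\psi_p(t)=(t+1)^pQ_p(t)$ and $\psi_p'(t)=(t+1)^{p-1}R_p(t)$ from \Cref{lemma_wp_properties}(2)--(3), using $-\tfrac{1+\tau}{2}+1=\tfrac{1-\tau}{2}$. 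Then $g_\alpha$ contributes a factor $(\tfrac{1-\tau}{2})^{-p\alpha}$ and $\psi_{p,L}'$ a factor $(\tfrac{1-\tau}{2})^{p-1}$, so (up to a harmless power of $2$ absorbed into the constant) the integrand of $\beta_{k,L}$ takes the form
\[
  \Bigl(\tfrac{h}{2}\Bigr)^{1-\alpha}\,(t_x+1)^{1-\alpha}\,(1-\tau)^{\,p(1-\alpha)-1}\,\eta_k(t_x,\tau),
\]
where $\eta_k$ is the product of $Q_p(-\tfrac{1+\tau}{2})^{-\alpha}$, $R_p(-\tfrac{1+\tau}{2})$ and $T_k(\psi_{p,L}(t_x,\tau))$ with harmless constants. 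By \Cref{lemma_wp_properties}(2),(5) the first two factors are $C^\infty$ in $\tau$ and bounded on $[-1,1]$, and since $\psi_{p,L}(t_x,\cdot)$ maps $[-1,1]$ into $[-1,1]$ we have $|\eta_k(t_x,\tau)|\le C$ uniformly in $x\in\Omega$, $k\ge0$, $\tau\in[-1,1]$. Hence $|\beta_{k,L}(x)-\tilde\beta_{k,L}(x)|$ equals $C\,h^{1-\alpha}$ times the error of the $n_\beta$-point FF-Rule applied to $\tau\mapsto(1-\tau)^{\,p(1-\alpha)-1}\eta_k(t_x,\tau)$ on $[-1,1]$.

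The dichotomy enters through the exponent $\gamma:=p(1-\alpha)-1$. If $p(1-\alpha)\in\mathbb N$ then $\gamma$ is a nonnegative integer, $(1-\tau)^\gamma$ is a polynomial, and the integrand is $C^\infty$ on $[-1,1]$; applying \Cref{theorem_quad_convergence} with arbitrarily large smoothness order gives $|\beta_{k,L}(x)-\tilde\beta_{k,L}(x)|\le C\,h^{1-\alpha}/n_\beta^{\lambda}$ for any $\lambda\in\mathbb N$. If $p(1-\alpha)\notin\mathbb N$ then $\gamma>-1$ is not an integer and the integrand has a genuine algebraic endpoint singularity $(1-\tau)^\gamma$ at $\tau=1$ multiplied by the smooth, uniformly bounded factor $\eta_k$; invoking the convergence theorem for interpolatory Fej\'er quadrature of functions with such algebraic endpoint singularities — the counterpart for $(1-\tau)^\gamma$ of the result \cite[Theorem 6]{criscuolo1990convergence} used for the $(1-\tau)^{p-1}\log(1-\tau)$ singularity in \Cref{lemma_beta_approx_for_log}, cf.\ also \cite{kutz1984asymptotic} — gives an error of order $n_\beta^{-2\gamma-2}=n_\beta^{-2p(1-\alpha)}$, hence $|\beta_{k,L}(x)-\tilde\beta_{k,L}(x)|\le C\,h^{1-\alpha}/n_\beta^{2p(1-\alpha)}$.

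The right piece is handled identically: starting from \eqref{eq_betaR_COV}, the same substitution produces the factor $(1+\tau)^{\,p(1-\alpha)-1}$ times a smooth, uniformly bounded function — the mirror-image singularity at $\tau=-1$ with the same exponent — and hence the same bound. Adding the two estimates via \eqref{eq_err_sum_beta_kRandL}, and noting that the boundary cases $x\in\{a,b\}$ involve only one of the two pieces and are covered by the same argument, yields the claimed bound on $|\beta_k(x)-\tilde\beta_k(x)|$. The step I expect to be the main obstacle is establishing the \emph{uniformity of the constant $C$ in $k$}: the smooth factor $\eta_k$ carries $T_k\circ\psi_{p,L}$, whose sup-norm on $[-1,1]$ is at most $1$ but whose derivatives (equivalently, whose values on a fixed Bernstein ellipse) grow with $k$, so one must rely on a form of the quadrature-error estimate whose constant depends only on the sup-norm — or on a fixed low-order smoothness seminorm whose $k$-growth can subsequently be absorbed into the sum defining $E_3^n$ — of the smooth factor rather than on arbitrarily many of its derivatives; verifying that this feature, which is what makes the logarithmic case work, also holds for the algebraic-singularity estimate is the delicate point of the proof.
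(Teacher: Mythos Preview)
Your proposal is correct and follows essentially the same approach as the paper: the same product decomposition of the integrand into $(1-\tau)^{p(1-\alpha)-1}$ times a smooth factor built from $Q_p^{-\alpha}$, $R_p$, and $T_k\circ\psi_{p,L}$, the same case split on whether $p(1-\alpha)\in\mathbb N$ (invoking \Cref{theorem_quad_convergence} in the smooth case and \cite[Theorem~1]{kutz1984asymptotic} in the algebraic-singularity case), and the same symmetric treatment of $\beta_{k,R}$. Your closing caveat about the $k$-uniformity of the constant is a valid concern that the paper's own proof likewise does not address.
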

\begin{proof}
We first estimate the error $|\beta_{k,L}(x)-\tilde{\beta}_{k,L}(x)|$ for $x \in \Omega \setminus \{a\}$. Utilizing (2),(3) of \Cref{lemma_wp_properties}, decompose the integrand in \eqref{eq_betaL_COV} as a product of singular $f^{s}$ and regular $\eta$ part, that is
\begin{equation}
    g_{\alpha}\left(\frac{h}{2}(t_x+1)\psi_{p}\left(-\frac{1+\tau}{2}\right)\right)T_{k}(\psi_{p,L}(t_x,\tau))\psi^{\prime}_{p,L}(t_x,\tau) = f^{s}(\tau)\eta(t_x,\tau),
\end{equation}
where
\begin{equation*}
	f^s(\tau) =(1-\tau)^{p(1-\alpha)-1}, \quad \eta(t_x,\tau) = \left(\frac{h}{2}\frac{t_x+1}{2^{p}}\right)^{1-\alpha} \left|Q_{p}\left(-\frac{1+\tau}{2}\right)\right|^{-\alpha} T_{k}(\psi_{p,L}(t_x,\tau)) R_{p}\left(-\frac{1+\tau}{2}\right). 
\end{equation*}  
The analysis further can be divided into two cases: one where $p(1-\alpha)$ is a natural number, and the other where it is not.
In the first case $f^s(\tau)$ becomes a polynomial, which is smooth in $[-1,1]$. From \Cref{theorem_quad_convergence} it follows that FF-Rule converges super-algebraically for such functions. If $p(1-\alpha)$ is not an integer, 
employing (5) of \Cref{lemma_wp_properties}, $\eta$ is a smooth function and doesn't contribute asymptotically to the singularity at $\tau=1$ in $f^s$. Therefore, using \cite[Theorem 1]{kutz1984asymptotic} for $f^s(\tau) =  (1-\tau)^{\sigma}$, with $\sigma = p(1-\alpha)-1$, we have the following estimate for $x \in \Omega \setminus \{a\}$
\begin{equation}
\left|\beta_{k,L}(x) - \tilde{\beta}_{k,L}(x)\right| \le C  \frac{h^{1-\alpha}}{n^{2p(1-\alpha)}}.
\end{equation}
Similarly, we can show that $\left|\beta_{k,R}(x) - \tilde{\beta}_{k,R}(x)\right|$ satisfy the same estimate. 
\end{proof}
\noindent
We now give an estimate for $E_{3}^{n}$ \eqref{eq_error3} in the following theorem.

\begin{theorem}\label{theorem_singular_1d_error_third_term}
	If $u \in X^{m}(\Omega)$ then
	\begin{equation} E_{3}^{n}[u](x) \le C \begin{dcases}
		 \frac{h\log{n_{\beta}}}{n_{\beta}^{2p}}, & \text{if } \alpha=0 \\
		\frac{h^{1-\alpha}}{n_{\beta}^{\lambda}}, & \text{if } 0<\alpha<1 ,\,  p(1-\alpha) \in \mathbb{N}\\
		\frac{h^{1-\alpha}}{n_{\beta}^{2p(1-\alpha)}}, & \text{if } 0<\alpha<1,\,  p(1-\alpha) \notin \mathbb{N},
	\end{dcases}
    \end{equation}
    where $p\ge 2$ is an integer, $\lambda$ is any arbitrary positive integer, $h$ is length of patch $\Omega$, $n_{\beta}$ denotes the number of quadrature nodes in the FF-Rule approximation of $\beta_{k,L}(x)$ and $\beta_{k,R}(x)$ \eqref{eq_weights_betaLR}, and $C$ is some constant independent of $h$ and $n_{\beta}$.
\end{theorem}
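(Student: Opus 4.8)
The plan is to exploit the fact that the per-mode weight errors $|\beta_k(x) - \tilde{\beta}_k(x)|$ furnished by \Cref{lemma_beta_approx_for_log} and \Cref{lemma_beta_approx_for_alpha_kernel} are uniform in both the mode index $k$ and the target point $x\in\Omega$. Starting from the triangle-inequality bound already recorded before the theorem,
\[
E_{3}^{n}[u](x) \le \sum_{k=0}^{n-1} |\tilde{c}_{k}|\,\bigl|\beta_{k}(x) - \tilde{\beta}_{k}(x)\bigr|,
\]
I would pull the uniform weight-error bound out of the sum to get $E_{3}^{n}[u](x) \le \bigl(\max_{0\le k\le n-1}|\beta_{k}(x) - \tilde{\beta}_{k}(x)|\bigr)\sum_{k=0}^{n-1}|\tilde{c}_{k}|$, so the whole argument reduces to showing that the partial sums $\sum_{k=0}^{n-1}|\tilde{c}_{k}|$ are bounded by a constant independent of $h$ and $n$.

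For that, I would invoke \Cref{rem_disCheby_Coeff}: for $u\in X^{m}(\Omega)$ and $k\ge 1$ one has $|\tilde{c}_{k}| \le C\,h^{\min\{k,m+1\}}/k^{m+2} \le C/k^{m+2}$ since $h\le 1$; together with the trivial bound $|\tilde{c}_{0}| \le C\|u\|_{\infty}$ coming from continuity of $u$ on the compact set $\Omega$, this yields $\sum_{k=0}^{n-1}|\tilde{c}_{k}| \le C + C\sum_{k=1}^{\infty}k^{-(m+2)} \le C$, the series converging because $m\ge 0$ forces $m+2\ge 2>1$. This is the only place any care is needed: the coefficient-decay estimate is stated for $k\ge 1$, so the $k=0$ term must be treated separately, and one must also note that \Cref{lemma_beta_approx_for_log} and \Cref{lemma_beta_approx_for_alpha_kernel} are stated for $k\ge 0$, so their bounds do cover the mode $\beta_{0}(x)-\tilde{\beta}_{0}(x)$.

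It remains to substitute the three cases of the weight-error estimate. For $\alpha=0$, \Cref{lemma_beta_approx_for_log} gives $\max_{k}|\beta_{k}(x)-\tilde{\beta}_{k}(x)| \le C\,h\log n_{\beta}/n_{\beta}^{2p}$; for $0<\alpha<1$ with $p(1-\alpha)\in\N$, \Cref{lemma_beta_approx_for_alpha_kernel} gives $C\,h^{1-\alpha}/n_{\beta}^{\lambda}$ for arbitrary $\lambda\in\N$; and for $p(1-\alpha)\notin\N$ it gives $C\,h^{1-\alpha}/n_{\beta}^{2p(1-\alpha)}$. Multiplying each of these by the $O(1)$ factor $\sum_{k=0}^{n-1}|\tilde{c}_{k}|$ reproduces the three branches of the claimed bound verbatim, with the new constant still independent of $h$ and $n_{\beta}$.

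I do not anticipate a genuine obstacle: the substantive work sits in the two preceding lemmas on $|\beta_{k}-\tilde{\beta}_{k}|$ and in \Cref{rem_disCheby_Coeff}, and the theorem is essentially their bookkeeping combination. If anything, the mild subtlety is purely notational — keeping the two discretization parameters $n$ (number of Chebyshev modes) and $n_{\beta}$ (number of FF-Rule nodes used for the singular weights) separate, since the summation index runs to $n-1$ while the decay in the final bound is in $n_{\beta}$.
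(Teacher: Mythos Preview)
Your proposal is correct and follows essentially the same route as the paper: invoke the definition of $E_{3}^{n}$ and the uniform per-mode weight-error bounds of \Cref{lemma_beta_approx_for_log} and \Cref{lemma_beta_approx_for_alpha_kernel}. The paper's proof is a one-line ``the result follows,'' whereas you make explicit the only nontrivial ingredient it suppresses, namely that $\sum_{k=0}^{n-1}|\tilde c_k|=O(1)$ via \Cref{rem_disCheby_Coeff}; your handling of the $k=0$ term and the distinction between $n$ and $n_{\beta}$ is exactly what is needed.
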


\begin{proof}
	Using the definition of $E_{3}^{n}[u]$ in \eqref{eq_error3} and employing  \Cref{lemma_beta_approx_for_log}, and \Cref{lemma_beta_approx_for_alpha_kernel}, the result follows.
\end{proof}
\subsection{Total error}\label{subsection_total_error}

In this section, we present the total error in the methodology discussed in \ref{method}. First we derive the total error for a single patch $\Omega = [a, b]$ where $h = b-a$ is a constant. In the later case, we derive the error in $\Omega$ with varying patches.  
\begin{theorem}\label{Thm_quad_error_1d}(\textit{Single Patch}) If $u \in X^{m}[a,b]$ then the total error $E_{\mathrm{tot}}^{n}$ in the quadrature for a single patch is given by the following:
	\begin{enumerate}
		\item For $\alpha=0$, 
		\begin{equation}
		    E_{\mathrm{tot}}^{n}[u](x) \le C \left( \frac{(\log{n})^{2}}{n^{m+2}} +  \frac{\log{n_{\beta}}}{n_{\beta}^{2p}}\right).
		\end{equation}
		\item For $0<\alpha<1$, 
		\begin{equation}
		    E_{\mathrm{tot}}^{n}[u](x) \le C \left( \frac{1}{n^{m+2-\alpha}}+ \frac{1}{n_{\beta}^{\lambda}}\right), \text{ where } \lambda = \begin{cases}
			k, & \text{ if } p(1-\alpha)\in \N, k\in \N \\
			2p(1-\alpha), & \text{ if }	p(1-\alpha)\notin \N,
		\end{cases}
		\end{equation}
	\end{enumerate}
	$p$ is the order of the change of variable used in the computation of $\tilde{\beta}_{k}(x)$, $n$ is the number of terms in discrete Chebyshev expansion, $n_{\beta}$ is the number of FF quadrature points used in the approximation of $\beta_{k}(x)$ \eqref{eq_weights_betaLR}, and $C$ is some constant independent of $n$ and $n_{\beta}$. 
\end{theorem}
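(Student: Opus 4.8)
The plan is to combine the three individual error bounds obtained in \Cref{theorem_singular_1d_error_first_term}, \Cref{theorem_singular_1d_error_second_term}, and \Cref{theorem_singular_1d_error_third_term} according to the triangle inequality \eqref{Eq_total_err_sing_int}, namely $E_{\mathrm{tot}}^{n}[u](x) \le E_{1}^{n}[u](x)+E_{2}^{n}[u](x)+E_{3}^{n}[u](x)$. Since $h = b-a$ is a fixed constant for the single patch, all powers of $h$ and all $|\log h|$ factors are absorbed into the constant $C$, so the only surviving dependence is on the discretization parameters $n$ and $n_{\beta}$.

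First I would handle the case $\alpha = 0$. From \Cref{theorem_singular_1d_error_first_term} we have $E_{1}^{n} \le C\big(h^{\min\{n+1,m+2\}}|\log h|/n^{m+3} + h^{\min\{n+1,m+2\}}\log n/n^{m+2}\big)$, and with $h$ constant the dominant term is $C\log n / n^{m+2}$. From \Cref{theorem_singular_1d_error_second_term}, $E_{2}^{n} \le C\big((\log n)^{2}/n^{m+2} + |\log h|/n^{m+3}\big)$, whose leading term is $C(\log n)^{2}/n^{m+2}$; this subsumes the $E_1$ contribution. From \Cref{theorem_singular_1d_error_third_term}, $E_{3}^{n} \le C\,h\log n_{\beta}/n_{\beta}^{2p}$, which gives $C\log n_{\beta}/n_{\beta}^{2p}$. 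Summing yields the claimed bound $C\big((\log n)^{2}/n^{m+2} + \log n_{\beta}/n_{\beta}^{2p}\big)$.

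Next I would treat $0<\alpha<1$. Here \Cref{theorem_singular_1d_error_first_term} gives $E_{1}^{n} \le C h^{\min\{n+1-\alpha,m+2-\alpha\}}/n^{m+2-\alpha}$, \Cref{theorem_singular_1d_error_second_term} gives $E_{2}^{n} \le C h^{\min\{n+2-\alpha,m+2-\alpha\}}/n^{m+2-\alpha}$, and both reduce, for fixed $h$, to $C/n^{m+2-\alpha}$. \Cref{theorem_singular_1d_error_third_term} contributes $C\,h^{1-\alpha}/n_{\beta}^{\lambda}$ with $\lambda = k$ (any natural number) when $p(1-\alpha)\in\N$ and $\lambda = 2p(1-\alpha)$ otherwise; again dropping the $h^{1-\alpha}$ factor. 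Adding the two terms and writing $\lambda$ as in the statement produces the desired estimate.

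There is no genuine obstacle here: the theorem is essentially a bookkeeping consolidation of three already-proven lemmas, with the single-patch hypothesis ($h$ fixed) used to discard $h$- and $\log h$-dependent prefactors. The only mild care needed is to verify that, in each regime, one of $E_1,E_2,E_3$ genuinely dominates the others up to the constant $C$ — in particular that $(\log n)^2/n^{m+2}$ dominates $\log n/n^{m+2}$ and $|\log h|/n^{m+3}$ for $\alpha = 0$, and that the $1/n^{m+2-\alpha}$ terms coincide for $0<\alpha<1$ — which is immediate. I would close by noting that this bound sets the stage for the varying-patch analysis of \Cref{Thm_Patches_Varying}, where the $h$-dependence that we suppress here becomes the essential quantity.
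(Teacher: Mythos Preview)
Your proposal is correct and follows exactly the paper's approach: invoke the decomposition \eqref{Eq_total_err_sing_int} and combine the bounds from \Cref{theorem_singular_1d_error_first_term}, \Cref{theorem_singular_1d_error_second_term}, and \Cref{theorem_singular_1d_error_third_term}, absorbing all $h$-dependent factors into the constant since $h$ is fixed on a single patch. The paper's own proof is a single sentence to this effect; your additional remarks about which term dominates in each regime are accurate and add helpful clarity.
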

\begin{proof}	
	Proof follows from the error inequality (\ref{Eq_total_err_sing_int}), and ignoring $h$ in   \Cref{theorem_singular_1d_error_first_term}, \Cref{theorem_singular_1d_error_second_term} and \Cref{theorem_singular_1d_error_third_term}.
\end{proof}
In the above theorem we have discussed error estimates for the singular quadrature on a single patch, now we derive the bounds in the case of varying patches.
\begin{theorem}\label{Thm_Patches_Varying_gen}(\textit{Varying Patches})
Let $P$ denote the number of patches of $\Omega$ with length of each patch bounded by $O(1/P)$. Let $n$ and $n_{\beta}$	denote the highest degree of the Chebyshev polynomial which approximates $u$ (see \eqref{trunc_cheby_expan}) and the number of quadrature points used in the approximation of $\beta_{k}^{\ell}(x)$ (see \eqref{sing_quad}) respectively.  If $u \in X^{m}(\Omega)$, $g_{\alpha}(|x-y|)$ is as defined in equation \eqref{eq_kernel}, and $n>m+1$, then the total error in the proposed quadrature satisfies
\begin{equation}
    E_{\mathrm{tot}}^{n}[u] = \max_{1\le \ell \leq P} E_{\ell,\mathrm{sing}}^{n}[u]
\end{equation}
where
\begin{enumerate}
\item For $\alpha=0$, 
\begin{equation}
E_{\ell,\mathrm{sing}}^{n}[u] \le C \left(\frac{h_{\ell}^{m+2}|\log{h_{\ell}}|}{n^{m+3}}+ \frac{h_{\ell}^{m+2} (\log{n})^2}{n^{m+2}}+ \frac{h_{\ell}\log{n_{\beta}}}{n_{\beta}^{2p}}\right).
\end{equation}
\item For $0<\alpha<1$, 
\begin{equation}
E_{\ell,\mathrm{sing}}^{n}[u] \le C\left(  \frac{h_{\ell}^{m+2-\alpha}}{n^{m+2-\alpha}} +  \frac{h_{\ell}^{1-\alpha}}{n_{\beta}^{\lambda}}\right), \text{ where } \lambda = \begin{cases}
k, & \text{ if } p(1-\alpha)\in \N, k\in \N \\
2p(1-\alpha), & \text{ if }	p(1-\alpha)\notin \N,
\end{cases} 
\end{equation}
\end{enumerate}
$p\geq 2$ is the order of the change of variable used in the computation of $\tilde{\beta}_{k}^{\ell}(x)$, and $C$ is some constant independent of $h_{\ell},n$, and $n_{\beta}$. 
\end{theorem}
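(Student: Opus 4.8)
The plan is to pass from the global error to a finite sum of per-patch errors, to isolate the $O(1)$ patches that the algorithm treats by the singular (or near-singular) scheme, to show that each remaining (regular) patch contributes a quantity asymptotically negligible against the worst singular patch, and finally to obtain the per-patch singular bound by re-running the single-patch analysis of \Cref{subsection_error_trunc}--\Cref{subsection_error_weight} while now retaining the explicit powers of $h_{\ell}$ that were discarded in \Cref{Thm_quad_error_1d}.

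First I would fix a target point $x\in\Omega$ and write $\I[u](x)-\I^{n}[u](x)=\sum_{\ell=1}^{P}\bigl(\I_{\ell}[u](x)-\I_{\ell,\ast}^{n}[u](x)\bigr)$, where the superscript $\ast$ is $\mathrm{reg}$ or $\mathrm{sing}$ according to the classification in \Cref{method}, and then apply the triangle inequality. Since every patch has length $O(1/P)$, for a fixed distance threshold only a bounded number $N_{x}=O(1)$ of patches lie within that distance of $x$ (these are the singular and near-singular ones), while the other $P-N_{x}$ patches are regular with the target uniformly separated from the patch; for $P$ large each $h_{\ell}\le 1$, so the hypotheses of the earlier theorems are met. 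This splits $E_{\mathrm{tot}}^{n}[u](x)$ into a regular part and a singular part.

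For the regular part, on such a patch the kernel $g_{\alpha}(|x-\xi_{\ell}(\cdot)|)$ is $C^{\infty}$ with derivatives bounded by a constant uniform over the regular patches (this is exactly what the distance threshold buys), so the integrand $g_{\alpha}(|x-\xi_{\ell}(\cdot)|)\phi_{\ell}(\cdot)$ inherits the $X^{m}$ regularity of $\phi_{\ell}$; applying \Cref{theorem_quad_convergence} and using $n>m+1$, which forces $\min\{n+1+n_{0},m+2\}=m+2$, the error on each regular patch is $O\!\left(h_{\ell}^{m+2}/n^{m+2}\right)$, so summing over at most $P$ of them with $h_{\ell}=O(1/P)$ gives a regular contribution of order $P^{-(m+1)}$, which (since $m\ge 0$ and $0\le\alpha<1$ imply $m+1\ge 1-\alpha$) is dominated by $P^{-(1-\alpha)}$ and hence by $\max_{\ell}E_{\ell,\mathrm{sing}}^{n}[u]$. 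For the singular part, on the patch containing $x$ I would invoke the decomposition \eqref{Eq_total_err_sing_int}, $E_{\ell,\mathrm{sing}}^{n}\le E_{1}^{n}+E_{2}^{n}+E_{3}^{n}$, and feed in \Cref{theorem_singular_1d_error_first_term}, \Cref{theorem_singular_1d_error_second_term} and \Cref{theorem_singular_1d_error_third_term}, keeping the $h_{\ell}$-powers; the hypothesis $n>m+1$ collapses $\min\{n+1,m+2\}=\min\{n+2,m+2\}=m+2$ and $\min\{n+1-\alpha,m+2-\alpha\}=m+2-\alpha$, and adding the three estimates reproduces exactly the stated per-patch bounds for $\alpha=0$ and for $0<\alpha<1$. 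For a near-singular patch one first projects the singular parameter $t_{x}$ to the nearest edge and then runs the singular scheme; after the change of variable $\psi_{p}$ the integrand is at worst as singular at that edge as in the genuinely singular case, so \Cref{lemma_beta_approx_for_log}, \Cref{lemma_beta_approx_for_alpha_kernel} and hence \Cref{theorem_singular_1d_error_third_term} still apply with the same $h_{\ell}$-dependence. Taking the maximum over the $O(1)$ singular/near-singular patches and absorbing the regular remainder then yields $E_{\mathrm{tot}}^{n}[u]\le C\max_{1\le\ell\le P}E_{\ell,\mathrm{sing}}^{n}[u]$, the reverse inequality being immediate from the patch that contains $x$.

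The main obstacle I anticipate is making the near-singular step rigorous and uniform: one must verify that projecting $t_{x}$ to the patch edge and applying $\psi_{p}$ yields, for $x$ outside but close to $\Omega_{\ell}$, an error whose constant does not degenerate as $\mathrm{dist}(x,\Omega_{\ell})\to 0$ and whose $h_{\ell}$-scaling matches the exactly-singular case, i.e. that the almost-singular edge behavior is controlled by the same Jacobi-type estimates (\cite{kutz1984asymptotic}, \cite{criscuolo1990convergence}) used in \Cref{lemma_beta_approx_for_log}--\Cref{lemma_beta_approx_for_alpha_kernel}. A secondary, more routine point is checking that the constant in the regular-patch estimate is genuinely uniform over all far patches, which is precisely where the distance threshold defining "regular" enters.
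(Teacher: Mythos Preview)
Your overall architecture matches the paper's proof: split into singular and regular patches, feed the single-patch estimates of \Cref{theorem_singular_1d_error_first_term}--\Cref{theorem_singular_1d_error_third_term} into the singular patch, use $n>m+1$ to collapse the $\min$'s, and bound the regular contribution via \Cref{theorem_quad_convergence}. The singular-patch part is fine.

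The gap is in your regular-error bookkeeping. You apply only part (i) of \Cref{theorem_quad_convergence} (the $X^{m}$ bound), obtaining $O(h_{\ell}^{m+2}/n^{m+2})$ on each of the $P$ regular patches and hence a total regular error of order $h^{m+1}/n^{m+2}$. You then try to absorb this into $\max_{\ell}E_{\ell,\mathrm{sing}}^{n}[u]$ by comparing $h$-exponents only ($m+1\ge 1-\alpha$). But the theorem asserts a bound with $C$ independent of $h_{\ell}$, $n$, and $n_{\beta}$ simultaneously, and $h^{m+1}/n^{m+2}$ is \emph{not} dominated by either $h^{m+2-\alpha}/n^{m+2-\alpha}$ or $h^{1-\alpha}/n_{\beta}^{\lambda}$ uniformly in all three parameters: the first comparison requires $1\le C h^{1-\alpha}n^{\alpha}$, which fails as $h\to 0$; the second requires $h^{m+\alpha}n_{\beta}^{\lambda}\le C n^{m+2}$, which fails for $n_{\beta}$ large relative to $n$. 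The same issue arises for $\alpha=0$.

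The fix, which is exactly what the paper does, is to notice that $u\in X^{m}(\Omega)$ means $u^{(m+2)}$ is only \emph{piecewise} continuous, so its discontinuities lie in at most $n_{d}=O(1)$ patches; on the remaining $P-n_{d}$ patches the integrand is genuinely $C^{m+2}$ and part (ii) of \Cref{theorem_quad_convergence} gives $O(h_{\ell}^{m+3}/n^{m+2})$. Summing $P-n_{d}$ terms of order $h^{m+3}$ yields $O(h^{m+2}/n^{m+2})$, and the $n_{d}$ exceptional patches contribute $n_{d}\cdot O(h^{m+2}/n^{m+2})=O(h^{m+2}/n^{m+2})$. This matches the leading singular term and is absorbed cleanly. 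Your concern about the near-singular case is legitimate, but the paper sidesteps it by simply declaring ``without loss of generality there is only one singular patch''; you are right that a fully rigorous treatment would require the uniformity check you describe.
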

\begin{proof}
Without loss of generality, assume that $x\in\Omega_{\ell}$ for some $\ell = 1,\dots,P$, that is, there is only one singular patch $\Omega_{\ell}$. The total error $E_{\mathrm{tot}}^{n}[u](x)$ is the sum of the error in the singular integral $E_{\ell,\mathrm{sing}}^{n}[u](x)$ (which is on $\Omega_{\ell}$) and errors in the regular integral $E_{\mathrm{reg}}^{n}[u](x)$ (on other patches for $\Omega_{j}$, $j \ne \ell$). That is,  \begin{equation}\label{tot_error_unsimplified}
    E_{\mathrm{tot}}^n[u](x) \le \left| \int_{\Omega_{\ell}} g_{\alpha}(|x-y|)u(y)dy - \I_{\ell, \mathrm{sing}}^n[u](x)\right| + \sum_{\substack{j=1\\ j \ne \ell}}^{P}\left| \int_{\Omega_{j}} g_{\alpha}(|x-y|)u(y)dy - \I_{j,\mathrm{reg}}^n[u](x)\right|,
\end{equation}
where $\I_{\ell, \mathrm{sing}}^n[u](x)$ and $\I_{j,\mathrm{reg}}^n[u](x)$ are the singular and regular integral approximations as defined in \eqref{sing_quad} and \eqref{eq_regular_int} respectively. Denoting the first  and the second terms of \eqref{tot_error_unsimplified} by the singular error $E_{\ell,\mathrm{sing}}^{n}[u](x)$ and the regular error $E_{\mathrm{reg}}^{n}[u](x)$ respectively, and employing \Cref{theorem_singular_1d_error_first_term}, \Cref{theorem_singular_1d_error_second_term} and \Cref{theorem_singular_1d_error_third_term} with $h=h_{\ell}$, the error term $E_{\ell,\mathrm{sing}}^{n}[u](x)$ for $0<\alpha<1$ becomes
\begin{equation}
E_{\ell,\mathrm{sing}}^{n}[u](x) \le C\left( \frac{h_{\ell}^{m+2-\alpha}}{n^{m+2-\alpha}}+\frac{h_{\ell}^{1-\alpha}}{n_{\beta}^{\lambda}}\right), \text{ where } \lambda = \begin{cases}
k, & \text{ if } p(1-\alpha)\in \N, k\in \N \\
2p(1-\alpha), & \text{ if }	p(1-\alpha)\notin \N,
\end{cases}
\end{equation}
and for $\alpha = 0$ it becomes
\begin{equation}
E_{\ell,\mathrm{sing}}^{n}[u](x) \le C \left(\frac{h_{\ell}^{m+2}|\log{h_{\ell}}|}{n^{m+3}}+ \frac{h_{\ell}^{m+2} (\log{n})^2}{n^{m+2}}+ \frac{h_{\ell}\log{n_{\beta}}}{n_{\beta}^{2p}}\right).
\end{equation}
We now show that the error in regular integration $E_{\mathrm{reg}}^{n}[u](x)$ will not contribute to the total error $E_{\mathrm{tot}}^{n}[u]$. For the estimation of total regular error $E_{\mathrm{reg}}^{n}[u](x)$, observe that $u \in X^m(\Omega)$, which implies there exists finitely many discontinuities of $u^{(m+2)}$ contained in finitely many patches, say $n_d$ number of patches. Additionally, notice that $u \in C^{m+2}$ on $(P - n_d)$ patches, and using \Cref{theorem_quad_convergence} the regular error $E^n_{\mathrm{reg}}[u](x)$ can be written as 
\begin{equation}
E_{\mathrm{reg}}^{n}[u](x) = (P-n_d) O\left(\frac{h_{j_1}^{m+3}}{n^{m+2}}\right) + n_d O\left(\frac{h_{j_2}^{m+2}}{n^{m+2}}\right),
\end{equation}
where $j_1,j_2 \in \{1,\cdots,P\}$ such that $j_{1}$ and $j_{2}$ are the maximum length of the $(P-n_{d})$ patches (where $u\in C^{m+2}$) and $n_{d}$ patches (where $u^{(m)}$ is piecewise $C^{2}$). Since $h_{j} = O(1/P)$ for each $j$, and letting $h_{j_0}=\max\{h_{j_{1}},h_{j_{2}}\}$, we obtain $E_{\mathrm{reg}}^{n}[u](x) = \bigO\left(h_{j_0}^{m+2}/n^{m+2}\right).$ Thus, the total error $E_{\mathrm{tot}}^{n}[u](x)$ for $x\in \Omega_{\ell}$ becomes
\begin{equation}\label{eq_tot_err_x}
E_{\mathrm{tot}}^{n}[u] = E_{\ell,\mathrm{sing}}^{n}[u] +  \bigO\left(\frac{h_{j_{0}}^{m+2}}{n^{m+2}}\right).
\end{equation}
Since the total error in the proposed quadrature $E_{\mathrm{tot}}^{n}[u]$ is the supremum of the total errors corresponding to each point $x$ which is derived in \eqref{eq_tot_err_x} and observing the fact the patch length $h_{j_{0}}$ corresponds to some $x^{\prime}\in \Omega_{j_0}$, the total error is given by $ E_{\mathrm{tot}}^{n}[u] = \sup_{x\in \Omega} E_{\mathrm{tot}}^{n}[u](x) = \max_{1\leq \ell \leq P} E_{\ell,\mathrm{sing}}^{n}[u]$ as desired.
\end{proof}

\begin{theorem}\label{Thm_Patches_Varying}(\textit{Varying Patches with uniform patch-length})  If $u \in X^{m}(\Omega)$, $\Omega = [a,b]$ and $g_{\alpha}(|x-y|)$ is as defined in equation (\ref{eq_kernel}), then for any number of patches, total error in the proposed quadrature for $n>m+1$ is given by the following.
	\begin{enumerate}[leftmargin=1em]
		\item For $\alpha=0$, 
		\begin{equation}
			E_{\textrm{tot}}^{n}[u] \le C \left(\frac{h^{m+2}|\log{h}|}{n^{m+3}}+ \frac{h^{m+2} (\log{n})^2}{n^{m+2}}+ \frac{h\log{n_{\beta}}}{n_{\beta}^{2p}}\right)
		\end{equation}
		\item For $0<\alpha<1$, 
		\begin{equation}
			E_{\textrm{tot}}^{n}[u] \le C\left(  \frac{h^{m+2-\alpha}}{n^{m+2-\alpha}} +  \frac{h^{1-\alpha}}{n_{\beta}^{\lambda}}\right), \text{ where } \lambda = \begin{cases}
			k, & \text{ if } p(1-\alpha)\in \N, k\in \N \\
			2p(1-\alpha), & \text{ if }	p(1-\alpha)\notin \N,
		\end{cases}
		\end{equation}
	\end{enumerate}
	where $h$ is the uniform length of the patches given by $h=\frac{b-a}{P}$, $p\geq 2$ is the order of the change of variable used in the computation of $\tilde{\beta}_{k}^{l}(x)$, $n_{\beta}$ is the number of quadrature points used in the approximation of $\beta_{k}^{l}(x)$  (see \eqref{sing_quad}), $n$ denotes the highest degree of the Chebyshev polynomial which approximates $u$ (see \eqref{trunc_cheby_expan}), and $C$ is some constant independent of $h_{\ell},n$, and $n_{\beta}$.
\end{theorem}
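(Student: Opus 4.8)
The plan is to obtain this statement as an immediate specialization of \Cref{Thm_Patches_Varying_gen} to the uniform partition. First I would fix $P$ and write $\Omega=[a,b]$ as the union of the $P$ congruent subintervals $\Omega_1,\dots,\Omega_P$, each of length $h_\ell=h=(b-a)/P$. Since $h=(b-a)/P=O(1/P)$, the hypothesis of \Cref{Thm_Patches_Varying_gen} that every patch length be bounded by $O(1/P)$ is satisfied; the remaining hypotheses ($u\in X^m(\Omega)$, $g_\alpha$ as in \eqref{eq_kernel}, $n>m+1$) are exactly those assumed here, so \Cref{Thm_Patches_Varying_gen} applies directly.

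Next I would substitute $h_\ell=h$ into the per-patch bounds supplied by \Cref{Thm_Patches_Varying_gen}. Because all patches have the same length, those bounds become independent of $\ell$: for $\alpha=0$ each $E_{\ell,\mathrm{sing}}^{n}[u]$ is bounded by $C\bigl(h^{m+2}|\log h|/n^{m+3}+h^{m+2}(\log n)^2/n^{m+2}+h\log n_\beta/n_\beta^{2p}\bigr)$, and for $0<\alpha<1$ by $C\bigl(h^{m+2-\alpha}/n^{m+2-\alpha}+h^{1-\alpha}/n_\beta^{\lambda}\bigr)$ with $\lambda$ as in the statement, where $C$ is the $\ell$-independent constant furnished by \Cref{Thm_Patches_Varying_gen}. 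Since \Cref{Thm_Patches_Varying_gen} identifies $E_{\mathrm{tot}}^{n}[u]$ with $\max_{1\le\ell\le P}E_{\ell,\mathrm{sing}}^{n}[u]$, and the maximum of a finite family of equal quantities is their common value, the claimed estimates follow at once.

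The argument is thus essentially a substitution, so there is no real analytical obstacle; the only points requiring attention are bookkeeping ones. One should verify that the constant $C$ in \Cref{Thm_Patches_Varying_gen} is genuinely independent of $P$ (equivalently of $h$): this holds because in that theorem's proof the regular-patch contribution is shown to be $\bigO\!\bigl(h_{j_0}^{m+2}/n^{m+2}\bigr)$ with $h_{j_0}=O(1/P)$ and is then absorbed into the dominant singular-patch term (using $h\le 1$ and $n\ge 1$). One should also recall that the near-singular patches adjacent to the singular one are already handled within the proof of \Cref{Thm_Patches_Varying_gen}, via projection of $t_x$ to the nearest patch edge followed by the singular scheme. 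With these observations the theorem is established.
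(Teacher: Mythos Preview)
Your proposal is correct and follows exactly the paper's own approach: the paper's proof is the single line ``Proof follows from \Cref{Thm_Patches_Varying_gen},'' and your argument is precisely the specialization $h_\ell=h$ into that general result, with the maximum over $\ell$ becoming trivial. The additional bookkeeping remarks you make (independence of $C$ from $P$, handling of near-singular patches) are reasonable sanity checks but go beyond what the paper itself spells out.
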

\begin{proof}
Proof follows from \Cref{Thm_Patches_Varying_gen}. 
\end{proof}

\begin{remark}
The error estimate for fixed number of patches (possibly more than one) is the same as error presented in \Cref{Thm_quad_error_1d}, which is an immediate consequence of \Cref{Thm_Patches_Varying_gen}
\end{remark}
\section{Numerical results}\label{numerics}
This section presents the computational performance of the one-dimensional high-order integration scheme described in \Cref{method}. The theoretical convergence rate presented in \Cref{Thm_quad_error_1d} and \Cref{Thm_Patches_Varying} is numerically demonstrated through a variety of numerical results, which are consistently in agreement with the theoretical order of convergence (toc).
Throughout this section, we denote the number of nodes used in Chebyshev expansion of $u$ in each patch by $n$, quadrature nodes in approximating singular integrals by $n_{\beta}$ given by $n_{\beta} = 4n$, the total number of patches by $P$, the 
total number of grid points in the integration domain by $N$ given by $N=n\times P$, $p$ denotes the order of the change of variable $\psi_{p}$ defined in equation \eqref{w_p(t)}, and the relative error denoted by $\epsilon_{N,\infty}$. The numerical order of convergence (noc) is computed using the formula 
\begin{equation*}
	\text{noc} = \log_{b}\left(\frac{\epsilon_{N,\infty}}{\epsilon_{bN,\infty}}\right) , \text{ where } \epsilon_{N,\infty} = \frac{\max_{1\leq j\leq N} |u_{j}-u_{j}^{ex}|}{\max_{1\leq j\leq N}|u_{j}^{ex}|},
\end{equation*}
and $b = 2,3$. Here, $u_{j}^{ex}$ denotes the exact solution corresponding to the approximated solution $u_{j}$ at the quadrature point $x_j$. As elucidated in \Cref{method}, there are two approaches to achieve the high-order convergence of the method: either by increasing the number of quadrature points in each patch while keeping the number of patches fixed or by increasing the number of patches (hence reducing the size of patches) but keeping the number of integration nodes in each patch fixed. Thus, in what follows, we present the convergence behavior for both approaches as mentioned above. 
\begin{example}\textit{\bf{Convergence illustration for the fixed patch case when $\alpha=0$}}\\
Consider $g_{\alpha}(|x-y|) = \log|x-y|$, $u(y) = y^{m}|y|$ over the domain $\Omega=[-1,1]$.  As per \Cref{Thm_quad_error_1d}, the rate of convergence of our method will be $\text{toc} = \min\{2p,m+2\}$. For $m=3,4$ and $p=2,3,4$ relative errors at various levels of discretization are presented in \Cref{Table_Log_m3andm4}. In \Cref{Table_Log_m3andm4}, one can see the effect of $p$ on the noc for fixed regularity of the integral density $u$. To see the effect of the regularity parameter $m$ of density $u$, we take $p=5$ and approximate the integral  (\ref{conv_eqn}) for various values of $m$. In \Cref{Table_Log_p5_fixed}, errors at various discretization levels are reported for the numerical approximation of (\ref{conv_eqn}) with $m=0,1,\dots, 6$, while keeping $p=5$ in all the case. The Numerical order of convergence reported in these tables agrees with the theoretical order of convergence.  
\begin{table}[!hbt]
	\centering
	\scalebox{0.8}{
		\begin{tabular}{c| c| c| c| c| c| c|c|c|c|c|c|c }
			\hline\hline
			\multirow{3}{*}{n} & 
			\multicolumn{6}{c|}{m=3} & 
			\multicolumn{6}{c}{m=4} \\
			\cline{2-13} &
			\multicolumn{2}{c|}{p=2, toc=4} & 
			\multicolumn{2}{c|}{p=3, toc=5} & 
			\multicolumn{2}{c|}{p=4, toc=5} & 
			\multicolumn{2}{c|}{p=2, toc=4} & 
			\multicolumn{2}{c|}{p=3, toc=6} & 
			\multicolumn{2}{c}{p=4, toc=6}\\ 
			\cline{2-13}
			& $\epsilon_{n,\infty}$ & noc  &  $\epsilon_{n,\infty}$ & noc &  $\epsilon_{n,\infty}$ & noc &  $\epsilon_{n,\infty}$ & noc  &  $\epsilon_{n,\infty}$ & noc &  $\epsilon_{n,\infty}$ & noc \\
			\cline{2-3}
			\hline
			4 & 4.00e-02 & -& 4.44e-02 & -& 4.46e-02 & -& 1.06e-01 & -& 1.08e-01 & -& 9.88e-02 &\\
			8 & 2.74e-04 & 7.19& 2.75e-04 & 7.33& 2.75e-04 & 7.34 & 3.41e-04 & 8.28& 3.40e-04 & 8.31& 3.40e-04 & 8.19\\
			16 & 6.73e-06 & 5.35& 6.73e-06 & 5.35 & 6.73e-06 & 5.35& 8.01e-06 & 5.41& 4.52e-06 & 6.23 & 4.52e-06 & 6.23\\
			32 & 2.60e-07 & 4.70& 1.99e-07 & 5.08& 1.99e-07 & 5.08& 4.85e-07 & 4.05& 7.81e-08 & 5.85& 7.81e-08 & 5.85\\
			64 & 1.75e-08 & 3.89 & 6.14e-09 & 5.02& 6.14e-09 & 5.02 & 3.03e-08 & 4.00& 1.39e-09 & 5.81& 1.39e-09 & 5.81\\
			128 & 1.11e-09 & 3.98& 1.91e-10 & 5.00& 1.91e-10 & 5.00 & 1.90e-09 & 4.00& 2.47e-11 & 5.82& 2.47e-11 & 5.82\\
			256 & 6.94e-11 & 4.00& 5.98e-12 & 5.00& 5.98e-12 & 5.00& 1.19e-10 & 4.00& 4.35e-13 & 5.83 & 4.33e-13 & 5.84\\
			512 & 4.38e-12 & 3.98& 1.87e-13 & 5.00& 1.87e-13 & 5.00 & 7.49e-12 & 3.98& 4.07e-14 & 3.42& 5.50e-14 & 2.98\\
			1024 & 3.50e-13 & 3.65 & 2.97e-14& 2.66 & 2.39e-14 & 2.97 & 5.98e-13 & 3.65&-&-&-&-\\
			\hline\hline
	\end{tabular} }
    \vspace{-2mm}
    \caption{Convergence of the proposed integration scheme (fixed patch approach) to the approximation of integral operator (\ref{conv_eqn}) for the kernel $g_{\alpha}(x-y) = \log|x-y|$ with $u(y) = y^{m}|y|$.}
	\label{Table_Log_m3andm4}
\end{table} 
\begin{table}[!hbt]
	\centering
	\scalebox{0.8}{
		\begin{tabular}{c| c| c| c| c| c| c| c| c| c| c| c| c| c|c}
			\hline\hline
			\multirow{2}{*}{n} & 
			\multicolumn{2}{c|}{m=0,toc=2} & 
			\multicolumn{2}{c|}{m=1,toc=3} & 
			\multicolumn{2}{c|}{m=2,toc=4} & 
			\multicolumn{2}{c|}{m=3,toc=5} & 
			\multicolumn{2}{c|}{m=4,toc=6} & 
			\multicolumn{2}{c|}{m=5,toc=7} & 
			\multicolumn{2}{c}{m=6,toc=8} \\
			\cline{2-15}
			& $\epsilon_{n,\infty}$ & noc  &  $\epsilon_{n,\infty}$ & noc &  $\epsilon_{n,\infty}$ & noc &  $\epsilon_{n,\infty}$ & noc & $\epsilon_{n,\infty}$ & noc  &  $\epsilon_{n,\infty}$ & noc &  $\epsilon_{n,\infty}$ & noc \\
			\hline
4 & 8.74e-02 & -& 3.15e-02 & - & 3.93e-02 & -& 2.88e-02 & -& 6.98e-02 & -& 1.03e-01 & -& 1.18e-01 & -\\
8 & 2.99e-02 & 1.55& 1.23e-03 & 4.68& 1.71e-03 & 4.52& 2.76e-04 & 6.70& 3.39e-04 & 7.69& 1.59e-04 & 9.34& 9.21e-04 & 7.00\\
16 & 9.77e-03 & 1.61& 1.43e-04 & 3.10 & 1.27e-04 & 3.76& 6.73e-06 & 5.36 & 4.52e-06 & 6.23 & 6.55e-07 & 7.92& 3.57e-07 & 11.3\\
32 & 3.07e-03 & 1.67 & 1.77e-05 & 3.01 & 9.67e-06 & 3.71& 1.99e-07 & 5.08& 7.81e-08 & 5.85& 4.51e-09 & 7.18& 1.35e-09 & 8.05\\
64 & 9.30e-04 & 1.72& 2.21e-06 & 3.01 & 7.16e-07 & 3.76& 6.14e-09 & 5.02& 1.39e-09 & 5.81& 3.41e-11 & 7.04 & 5.71e-12 & 7.89\\
128 & 2.74e-04 & 1.77& 2.76e-07 & 3.00 & 5.19e-08 & 3.79& 1.91e-10 & 5.00 & 2.47e-11 & 5.82 & 2.65e-13 & 7.01& 1.80e-13 & 4.99\\
256 & 7.87e-05 & 1.80 & 3.45e-08 & 3.00& 3.70e-09 & 3.81 & 5.98e-12 & 5.00& 4.34e-13 & 5.83& 4.37e-14 & 2.60& 1.38e-13 & 0.38\\
512 & 2.22e-05 & 1.82& 4.31e-09 & 3.00 & 2.59e-10 & 3.83 & 1.87e-13 & 5.00& 4.07e-14 & 3.41&-&-&-&-\\
1024 & 6.20e-06 &1.84 & 5.38e-10& 3.00 & 1.80e-11 & 3.85 & 2.27e-14 & 3.04& -&- &-&-&-&-\\ 
	\hline\hline
	\end{tabular} }
    \vspace{-2mm}
    \caption{Convergence of the proposed integration scheme (fixed patch approach) to the approximation of integral operator (\ref{conv_eqn}) for the kernel $g_{\alpha}(x-y) = \log|x-y|$ with $u(y) = y^{m}|y|, m=0,1,\dots,6$, and $p=5$.}
	\label{Table_Log_p5_fixed}
\end{table} 
\end{example}
\begin{example}{\textbf{Convergence illustration for the patches varying case when $\alpha=0$}}\\
Consider $g_{\alpha}(x-y)=\ln|x-y|$ with two integral densities $u(y) = y^{2}|y|+1 $ and $u(y) = y^{3}|y|$. \Cref{Table_Log_patches_vary_low_order_m2andm3} presents the numerical results of this experiment for $p=2$ and $p=3$ with $n=16$. One can see that, for the above mentioned $u$, the method yields only first-order convergence which is explained by \Cref{Thm_Patches_Varying}. Specifically, if $u\in X^{m}$ and we choose $p$ such that the term $\log(n)/n^{2p}$ is small in comparison to other error terms, then the method will yield order of convergence $m+2$ demonstrated in \Cref{Table_Log_patches_vary_high_order_m2andm3} for $p=4,5$ and $6$. 
\begin{table}[!hbt]	
    \centering\scalebox{0.8}{\begin{tabular}{c| c|c| c|c| c|c| c|c}
			\hline\hline
			\multirow{3}{*}{$P\times n$} & 
			\multicolumn{4}{c|}{m=2} & 
			\multicolumn{4}{c}{m=3} \\
			 \cline{2-9} &
			\multicolumn{2}{c|}{p=2, toc=1} & 
			\multicolumn{2}{c|}{p=3, toc=1} & 
			\multicolumn{2}{c|}{p=2, toc=1} & 
			\multicolumn{2}{c}{p=3, toc=1} \\ 
			\cline{2-9}						& $\epsilon_{N,\infty}$ & noc  &  $\epsilon_{N,\infty}$ & noc&$\epsilon_{N,\infty}$ & noc  &  $\epsilon_{N,\infty}$ & noc \\ 
						\hline
						$1\times 16$ & 4.03e-04 & - & 4.03e-04& - & 2.74e-04 & -& 2.74e-04 & -\\
						$3\times 16$ & 7.18e-06 & 3.67 & 7.15e-06 & 3.67& 1.08e-06 & 5.04& 1.08e-06 & 5.04\\
						$9\times 16$ & 1.25e-07 & 3.69 & 1.15e-07& 3.76 & 3.14e-08& 3.22 & 4.44e-09 & 5.00\\ 
						$27\times 16$ & 6.53e-09 & 2.69 & 1.74e-09 & 3.82& 1.05e-08& 1.00& 7.16e-11 & 3.76\\
						$81\times 16$ & 2.18e-09 & 1.00 & 1.85e-11 & 4.41&3.51e-09 & 1.00& 2.39e-11 & 1.00\\
						$243\times 16$ & 7.26e-10 & 1.00 & 4.95e-12 &1.20& 1.17e-09 & 1.00& 7.97e-12 & 1.00\\ 
						$729\times 16$ & 2.42e-10& 1.00 & 1.65e-12 & 1.00& 3.90e-10 & 1.00&2.66e-12 & 1.00\\
						$2187\times 16$ & 8.06e-11 & 1.00 & 5.53e-13 & 0.99& 1.30e-10 & 1.00& 8.90e-13 & 1.00\\
						\hline\hline
				\end{tabular}}
                \vspace{-2mm}
				\caption{Convergence of the proposed integration scheme (varying patch approach) to the approximation of integral operator (\ref{conv_eqn}) for the kernel $g_{\alpha}(x-y) = \log|x-y|$ with the integral densities $u(y) = y^{m}|y|+1$ ($m=2$) and $u(y)=y^{m}|y|$ ($m=3$).}
				\label{Table_Log_patches_vary_low_order_m2andm3}
			\end{table}	
\begin{table}[!hbt]
	\centering\scalebox{0.8}{\begin{tabular}{c| c|c| c|c| c|c| c|c| c|c|c|c}
		\hline\hline
		\multirow{3}{*}{$P\times n$} & 
		\multicolumn{6}{c|}{m=2} & 
		\multicolumn{6}{c}{m=3} \\
		\cline{2-13}
		&
		\multicolumn{2}{c|}{p=4, toc=4} & 
		\multicolumn{2}{c|}{p=5, toc=4} & 
		\multicolumn{2}{c|}{p=6, toc=4} & 
		\multicolumn{2}{c|}{p=4, toc=5} &
		\multicolumn{2}{c|}{p=5, toc=5} & 
		\multicolumn{2}{c}{p=6, toc=5} \\ 
		\cline{2-13}
		& $\epsilon_{n,\infty}$ & noc  &  $\epsilon_{n,\infty}$ & noc &  $\epsilon_{n,\infty}$ & noc &  $\epsilon_{n,\infty}$ & noc & $\epsilon_{n,\infty}$ & noc  &  $\epsilon_{n,\infty}$ & noc\\ 
		\hline
			$1\times 16$& 4.03e-04 & - & 4.03e-04 & - & 4.03e-04 & -& 2.74e-04 & - & 2.74e-04 & - & 2.74e-04 & -  \\
			$3\times 16$ & 7.15e-06 & 3.67 &7.15e-06 & 3.67 & 7.15e-06 & 3.67 & 1.08e-06 & 5.04 &1.08e-06 & 5.04 &1.08e-06 & 5.04\\
			$9\times 16$ & 1.15e-07 & 3.76 & 1.15e-07 & 3.76 & 1.15e-07 & 3.76& 4.44e-09 & 5.00 & 4.44e-09 & 5.00& 4.44e-09 & 5.00 \\ 
			$27\times 16$ & 1.76e-09 & 3.81 & 1.76e-09 & 3.81 & 1.76e-09 & 3.81 & 1.82e-11 & 5.00& 1.82e-11 & 5.00& 1.82e-11 & 5.00 \\
			$81\times 16$ &2.60e-11 & 3.84 & 2.59e-11& 3.84 & 2.59e-11 & 3.84&1.85e-13 & 4.18 & 7.49e-14& 5.00 & 7.49e-14& 5.00 \\
			$243\times 16$& 3.91e-13 & 3.82 & 3.72e-13 & 3.86 & 3.72e-13& 3.86 & 6.17e-14 & 1.00 & 3.20e-15 & 2.87 & 2.70e-15& 3.03\\ 
		$729\times 16$ & 3.12e-14 & 2.30 & 6.27e-15 & 3.72 & 6.48e-15 & 3.69& 2.24e-14 & 0.92 & - & - & - & - \\
			$2187\times 16$ & 1.65e-14 & 0.58 & - & -& - & -& 1.25e-14 & 0.53 & - & -& - & -\\
			\hline\hline
	\end{tabular}}
    \vspace{-2mm}
	\caption{Convergence of the proposed integration scheme (varying patch approach) to the approximation of integral operator (\ref{conv_eqn}) for the kernel $g_{\alpha}(x-y) = \log|x-y|$ with the integral densities $u(y) = y^{m}|y|+1$ ($m=2$) and $u(y)=y^{m}|y|$ ($m=3$). Increasing the value of $p$ leads to a high-order convergence rate.}
    \label{Table_Log_patches_vary_high_order_m2andm3}
\end{table}
\end{example}	

\begin{example}\textbf {Convergence illustration for fixed patch approach when $0<\alpha<1$}
\label{Example_patches_fixed_alpha0.75and0.9_low_order}\\
Consider the kernel $g_{\alpha}(x-y)=|x-y|^{-\alpha}$, $\alpha=0.75,0.9$ for $u(y) = y^{3}|y|\in X^{3}[-1,1]$ and $u(y) = y^{4}|y|\in X^{4}[-1,1]$ with $p=2,3,6$.
As per \Cref{Thm_quad_error_1d}, the order of convergence of the method will rely upon three parameters $\alpha$, $m$, and $p$. To be precise, if $p(1-\alpha)\in \N$ then $\text{toc} = m+2-\alpha$, otherwise $\text{toc} = \min\{m+2-\alpha, 2p(1-\alpha)\}$.  

For $p=2,3,6$, the term $p(1-\alpha) \notin \mathbb{N}$, therefore in this case $\text{toc} = \min\{m+2-\alpha, 2p(1-\alpha)\}$. Since $2p(1-\alpha)<m+2-\alpha$ for $m=3,4$ and $p=2,3,6$ with $\alpha=0.75,0.9$, the toc is dominated by the quantity $2p(1-\alpha)$, which is exactly demonstrated numerically in \Cref{Table_alpha0.75_and_m3_patches_fixed_low_order} and \Cref{Table_alpha0.75_and_m4_patches_fixed_low_order} for $m=3$ and $m=4$ respectively. 

\begin{table}[hbt!]
	\centering
	\scalebox{0.8}{
	\begin{tabular}{c|c|c|c|c|c|c|c|c|c|c|c|c}
		\hline\hline
		\multirow{3}{*}{n} & 
		\multicolumn{6}{c|}{$\alpha=0.75$, $u(y)=y^3|y|$} & 
		\multicolumn{6}{c}{$\alpha = 0.9$, $u(y)=y^3|y|$} \\
		\cline{2-13} &
		\multicolumn{2}{c|}{p=2, toc=1} & 
		\multicolumn{2}{c|}{p=3, toc=1.5} & 
		\multicolumn{2}{c|}{p=6, toc=3} & 
		\multicolumn{2}{c|}{p=2, toc=0.4} & 
		\multicolumn{2}{c|}{p=3, toc=0.6} & 
		\multicolumn{2}{c}{p=6, toc=1.2}\\ 
		\cline{2-13}
		&$\epsilon_{n,\infty}$ & noc &  $\epsilon_{n,\infty}$ & noc &  $\epsilon_{n,\infty}$ & noc &  $\epsilon_{n,\infty}$ & noc &  $\epsilon_{n,\infty}$ & noc &  $\epsilon_{n,\infty}$ & noc\\
			\cline{2-3}
			\hline
4 & 5.18e-02 &- & 6.12e-03 & -& 1.28e-02 & -& 3.48e-01 & -& 2.01e-01 & -& 1.93e-02 & -\\
8 & 2.92e-02 & 0.83& 1.87e-03 & 1.71& 1.29e-03 & 3.32& 2.71e-01 & 0.36& 1.39e-01 & 0.53& 1.10e-02 & 0.80 \\
16 & 1.47e-02 & 0.99& 7.14e-04 & 1.39& 5.22e-05 & 4.62& 2.08e-01 & 0.38& 9.26e-02 & 0.58& 4.87e-03 & 1.18\\
32 & 7.37e-03 & 0.99& 2.55e-04 & 1.48& 6.15e-06 & 3.08 & 1.57e-01 & 0.41& 6.08e-02 & 0.61 & 2.11e-03 & 1.21 \\
64 & 3.69e-03 & 1.00 & 9.04e-05 & 1.50 & 7.63e-07 & 3.01& 1.19e-01 & 0.40& 4.02e-02 & 0.60& 9.21e-04 & 1.20\\
128 & 1.84e-03 & 1.00 & 3.20e-05 & 1.50& 9.52e-08 & 3.00 & 9.01e-02 & 0.40& 2.65e-02 & 0.60 & 4.01e-04 & 1.20 \\
256 & 9.22e-04 & 1.00& 1.13e-05 & 1.50& 1.19e-08 & 3.00 & 6.83e-02 & 0.40& 1.75e-02 & 0.60& 1.74e-04 & 1.20\\
512 & 4.61e-04 & 1.00& 4.00e-06 & 1.50 & 1.49e-09 & 3.00  & 5.17e-02 & 0.40& 1.15e-02 & 0.60 & 7.59e-05 & 1.20\\
1024 & 2.30e-04 & 1.00& 1.41e-06 & 1.50& 1.86e-10 & 3.00 & 3.92e-02 & 0.40 & 7.60e-03 & 0.60 & 3.30e-05 & 1.20 \\ \hline\hline
	\end{tabular} } 
    \vspace{-2mm}
    \caption{Convergence of the proposed integration scheme (fixed patch approach with $P=1$) to the approximation of integral operator (\ref{conv_eqn}) for the kernel $g_{\alpha}(x-y) = |x-y|^{-\alpha}$, $\alpha=0.75,0.9$ with $u(y) = y^{3}|y|$.}
	\label{Table_alpha0.75_and_m3_patches_fixed_low_order}
\end{table} 	
\begin{table}[hbt!]
	\centering
	\scalebox{0.8}{
		\begin{tabular}{c| c| c| c| c|c|c|c|c|c|c|c|c }
			\hline\hline
			\multirow{3}{*}{n} & 
			\multicolumn{6}{c|}{$\alpha=0.75$,$u(y)=y^4|y|$} & 
			\multicolumn{6}{c}{$\alpha = 0.9$,$u(y)=y^4|y|$} \\
			\cline{2-13} &
			\multicolumn{2}{c|}{p=2, toc=1} & 
			\multicolumn{2}{c|}{p=3, toc=1.5} & 
			\multicolumn{2}{c|}{p=6, toc=3} & 
			\multicolumn{2}{c|}{p=2, toc=0.4} & 
			\multicolumn{2}{c|}{p=3, toc=0.6} & 
			\multicolumn{2}{c}{p=6, toc=1.2} \\ 
			\cline{2-13}
			&  $\epsilon_{n,\infty}$ & noc &  $\epsilon_{n,\infty}$ & noc  &  $\epsilon_{n,\infty}$ & noc &  $\epsilon_{n,\infty}$ & noc &  $\epsilon_{n,\infty}$ & noc &  $\epsilon_{n,\infty}$ & noc\\
			\cline{2-3}
			\hline
		4 & 1.86e-02 &-& 2.91e-02 & -& 7.72e-03 & -& 3.33e-01 & - & 1.89e-01 &- & 1.68e-02 & - \\
		8 & 2.83e-02 & -0.61& 2.00e-03 & 3.86 & 2.46e-04 & 4.97& 2.70e-01 & 0.30 & 1.38e-01 & 0.46 & 1.07e-02 & 0.65\\
		16& 1.46e-02 & 0.95 & 7.13e-04 & 1.49 & 5.05e-05 & 2.28& 2.07e-01 & 0.38& 9.23e-02 & 0.58 & 4.85e-03 & 1.14\\
		32& 7.22e-03 & 1.02 & 2.50e-04 & 1.51 & 5.97e-06 & 3.08 & 1.56e-01 & 0.41 & 6.05e-02 &0.61& 2.10e-03 & 1.21\\
		64 & 3.59e-03 & 1.01& 8.80e-05 & 1.51& 7.41e-07 & 3.01 & 1.19e-01 & 0.39& 4.00e-02 & 0.59& 9.18e-04 & 1.19\\
		128 & 1.79e-03 & 1.00& 3.11e-05 & 1.50 & 9.26e-08 & 3.00& 8.98e-02 & 0.40& 2.64e-02 & 0.60  & 3.99e-04 & 1.20\\
		256 & 8.97e-04 & 1.00 & 1.10e-05 & 1.50 & 1.16e-08 & 3.00& 6.81e-02 & 0.40 & 1.74e-02 & 0.60 & 1.74e-04 & 1.20\\
		512 & 4.49e-04 & 1.00 & 3.90e-06 & 1.50 & 1.45e-09 & 3.00& 5.16e-02 & 0.40 & 1.15e-02 & 0.60& 7.57e-05 & 1.20\\
		1024 & 2.24e-04 & 1.00& 1.38e-06 & 1.50 & 1.81e-10 & 3.00 & 3.91e-02 & 0.40& 7.58e-03 &0.60 & 3.29e-05 & 1.20\\ 	
			\hline\hline
	\end{tabular} } 
    \vspace{-2mm}
    \caption{Convergence of the proposed integration scheme (fixed patch approach) to the approximation of integral operator (\ref{conv_eqn}) for the kernel $g_{\alpha}(x-y) = |x-y|^{-\alpha}$, $\alpha=0.75,0.9$ with $u(y) = y^{4}|y|$.}
	\label{Table_alpha0.75_and_m4_patches_fixed_low_order}
\end{table} 
Therefore, in this case to achieve optimal order of convergence in terms of $m$, the choices of the order of change of variable $p$ for $\alpha=0.75$ are $p\in 4\N$ and for $\alpha=0.9$ are $p\in 10\N$. For instance, if $m=3$ then the toc is $4.25$ and $4.10$ for $\alpha=0.75$ and $\alpha=0.9$ respectively. We numerically describe this behavior of the proposed method in \Cref{Table_alpha0.75and0.9_and_m3_patches_fixed_high_order}. Similar behavior is noticed if $m=4$ for $\alpha=0.75$ and $\alpha=0.9$ in \Cref{Table_alpha0.75and0.9_and_m4_patches_fixed_high_order}.
\begin{table}[hbt!]
	\centering
	\scalebox{0.8}{
		\begin{tabular}{c| c| c| c| c| c| c| c| c|c|c|c|c }
			\hline\hline
			\multirow{3}{*}{n} & 
			\multicolumn{6}{c|}{$\alpha=0.75$, $u(y)=y^3|y|$} & 
			\multicolumn{6}{c}{$\alpha = 0.9$, $u(y)=y^3|y|$} \\
			\cline{2-13} &
			\multicolumn{2}{c|}{p=4, toc=4.25} & 
			\multicolumn{2}{c|}{p=8, toc=4.25} & 
			\multicolumn{2}{c|}{p=12, toc=4.25} & 
			\multicolumn{2}{c|}{p=10, toc=4.10} &
			\multicolumn{2}{c|}{p=20, toc=4.10} & 
			\multicolumn{2}{c}{p=30, toc=4.10} \\ 
			\cline{2-13}
			& $\epsilon_{n,\infty}$ & noc  &  $\epsilon_{n,\infty}$ & noc &  $\epsilon_{n,\infty}$ & noc &  $\epsilon_{n,\infty}$ & noc  &  $\epsilon_{n,\infty}$ & noc &  $\epsilon_{n,\infty}$ & noc\\
			\cline{2-3}
			\hline
		4& 1.54e-02 &-& 6.04e-02 & -& 1.72e-01 & -& 2.82e-02 & -& 8.43e-02 & - & 7.71e-02 & -\\
		 8& 1.54e-04 & 6.64 & 5.02e-03 & 3.59 & 5.08e-03 & 5.08& 2.47e-03 & 3.51 & 9.02e-03 & 3.23& 1.42e-02 & 2.44\\
		 16 & 4.64e-06 & 5.06& 3.32e-05 & 7.24 & 5.35e-05 & 6.57& 1.71e-05 & 7.18& 2.01e-05 & 8.81 & 5.18e-05 & 8.10\\
		 32 & 2.27e-07 & 4.35& 5.07e-07 & 6.03& 1.61e-06 & 5.06 & 3.77e-07 & 5.50 & 6.59e-07 & 4.93 & 1.31e-06 & 5.30\\
		 64 & 1.18e-08 & 4.27 & 2.62e-08 & 4.27 & 4.57e-08 & 5.14 & 9.55e-09 & 5.30 & 2.45e-08 & 4.75 & 3.86e-08 & 5.09\\
		 128 & 6.17e-10 & 4.25 & 6.93e-10 & 5.24& 1.44e-09 & 4.98 & 3.61e-10 & 4.73 & 8.28e-10 & 4.89& 1.25e-09 & 4.94\\
		 256 & 3.24e-11 & 4.25 & 3.24e-11 & 4.42& 6.13e-11 & 4.56 & 2.10e-11 & 4.10 & 2.47e-11 & 5.06& 3.96e-11 & 4.99\\
		 512 & 1.70e-12 & 4.25& 1.70e-12 & 4.25 & 1.95e-12 & 4.98 & 1.23e-12 & 4.10& 1.23e-12 & 4.34 & 1.24e-12 & 4.99\\
		 1024 & 1.68e-13 & 3.34& 1.79e-13 & 3.25& 2.14e-13 & 3.18& 2.45e-13 & 2.32 & 2.20e-13 & 2.48& 2.05e-13 & 2.60\\ 	 
			\hline\hline
	\end{tabular} } 
    \vspace{-2mm}
    \caption{Convergence of the proposed integration scheme (fixed patch approach) to the approximation of integral operator (\ref{conv_eqn}) for the kernel $g_{\alpha}(x-y) = |x-y|^{-\alpha}$, $\alpha=0.75,0.9$ with $u(y) = y^{3}|y|$.}
\label{Table_alpha0.75and0.9_and_m3_patches_fixed_high_order}
\end{table} 
\begin{table}[hbt!]
	\centering
	\scalebox{0.8}{
		\begin{tabular}{c| c| c| c| c| c| c| c| c|c|c|c|c }
			\hline\hline
			\multirow{3}{*}{n} & 
			\multicolumn{6}{c|}{$\alpha=0.75$, $u(y)=y^4|y|$} & 
			\multicolumn{6}{c}{$\alpha = 0.9$, $u(y)=y^4|y|$} \\
			\cline{2-13} &
			\multicolumn{2}{c|}{p=4, toc=5.25} & 
			\multicolumn{2}{c|}{p=8, toc=5.25} & 
			\multicolumn{2}{c|}{p=12, toc=5.25} & 
			\multicolumn{2}{c|}{p=10, toc=5.10} &
			\multicolumn{2}{c|}{p=20, toc=5.10} & 
			\multicolumn{2}{c}{p=30, toc=5.10} \\ 
			\cline{2-13}
			& $\epsilon_{n,\infty}$ & noc  &  $\epsilon_{n,\infty}$ & noc &  $\epsilon_{n,\infty}$ & noc &  $\epsilon_{n,\infty}$ & noc  &  $\epsilon_{n,\infty}$ & noc &  $\epsilon_{n,\infty}$ & noc\\
			\cline{2-3}
			\hline
		4 & 3.17e-02 &-& 4.29e-02 & -& 1.21e-01 &-& 2.21e-02 & -& 5.02e-02 & - & 2.64e-02 & -\\
		8 & 1.21e-04 & 8.04& 4.41e-03 & 3.28& 1.03e-02 & 3.56& 2.96e-03 & 2.90 & 8.07e-03 & 2.64 & 1.92e-02 & 0.46\\
		16 & 1.43e-06 & 6.40& 1.66e-06 & 11.4 & 1.74e-05 & 9.20& 2.75e-06 & 10.1 & 1.39e-05 & 9.18& 2.41e-04 & 6.32\\
		32 & 3.47e-08 & 5.37& 3.47e-08 & 5.58 & 1.36e-07 & 7.01& 2.15e-08 & 7.00& 1.18e-07 & 6.88 & 3.07e-07 & 9.62\\
		64 & 8.95e-10 & 5.28& 8.95e-10 & 5.28& 1.92e-09 & 6.14 & 4.47e-10 & 5.59& 1.88e-09 & 5.97& 3.94e-09 & 6.29\\
		128 & 2.34e-11 & 5.26 & 2.34e-11 & 5.26 & 2.64e-11 & 6.18& 1.30e-11 & 5.11 & 2.93e-11 & 6.00& 6.35e-11 & 5.95\\
		256 & 6.15e-13 & 5.25 & 6.14e-13 & 5.25& 6.15e-13 & 5.43& 3.78e-13 & 5.10& 4.54e-13 & 6.01 & 9.89e-13 & 6.00\\
		512 & 5.83e-14 & 3.40 & 5.79e-14 & 3.41& 5.24e-14 & 3.55& 7.03e-14 & 2.43 & 6.54e-14 & 2.79& 8.17e-14 & 3.60\\
			\hline\hline
	\end{tabular} } 
    \vspace{-2mm}
    	\caption{Convergence of the proposed integration scheme (fixed patch approach) to the approximation of integral operator (\ref{conv_eqn}) for the kernel $g_{\alpha}(x-y) = |x-y|^{-\alpha}$, $\alpha=0.75$, $0.9$ with $u(y) = y^{4}|y|$.}	\label{Table_alpha0.75and0.9_and_m4_patches_fixed_high_order}
\end{table} 
	\end{example}
\begin{example}\textbf {Convergence illustration for varying patch approach when $0<\alpha<1$}\\	
	Consider the kernel $g_{\alpha}(x-y) = |x-y|^{-\alpha}$, $\alpha=0.75$ and $u(y) = y^4|y|+y+1 \in X^4[-1,1]$. According to \Cref{Thm_Patches_Varying}, if $p(1-\alpha)$ is not an integer, then the toc is $1-\alpha = 0.25$, which indicates poor convergence. For instance, if $p = 3,5,6,7$, then the toc becomes $0.25$, which is consistent with the numerical experiment shown in \Cref{Table_alpha0.75_and_m4_patches}. However, if $p(1-\alpha)$ is an integer, then the toc depends on the regularity of $u$ which is $\text{toc}=m+2-\alpha$ as analytically estimated in \Cref{Thm_Patches_Varying}. This phenomenon can be observed in the same table when $p=4,8$, where the noc matches exactly with the toc.     
	\begin{table}[hbt!]
		\centering\scalebox{0.8}{\begin{tabular}{c|c|c|c|c|c|c|c|c|c|c}
				\hline\hline
				\multirow{2}{*}{P} & 
				\multicolumn{2}{c|}{p=4(toc=5.25)} & 
				\multicolumn{2}{c|}{p=5(toc=0.25)} & 
				\multicolumn{2}{c|}{p=6(toc=0.25)} & 
				\multicolumn{2}{c|}{p=7(toc=0.25)} & 
				\multicolumn{2}{c}{p=8(toc=5.25)} \\ 
				\cline{2-11}
				& $\epsilon_{\infty}$ & noc  &  $\epsilon_{\infty}$ & noc &  $\epsilon_{\infty}$ & noc &  $\epsilon_{\infty}$ & noc &  $\epsilon_{\infty}$ & noc \\
				\hline
				$1$ & $5.40e-05$ & $-$ & $5.72e-05$ & $-$ & $5.51e-05$ & $-$ & $5.41e-05$ & $-$ & $5.40e-05$ & $-$\\
				$3$ & $1.47e-07$ & $5.37$ & $3.28e-06$ & $2.60$ & $1.17e-06$ & $3.51$ & $2.36e-07$ & $4.95$ & $1.47e-07$ & $5.37$\\
				$9$ & $3.83e-10$ & $5.42$ & $2.66e-06$ & $0.19$ & $9.50e-07$ & $0.19$ & $1.52e-07$ & $0.40$ & $3.83e-10$ & $5.42$\\
				$27$ & $1.13e-12$ & $5.30$ & $2.19e-06$ & $0.18$ & $7.82e-07$ & $0.18$ & $1.25e-07$ & $0.18$& $1.13e-12$ & $5.30$\\
				$81$ & $6.00e-15$ & $4.77$ & $1.80e-06$ & $0.18$ & $6.42e-07$ & $0.18$ & $1.03e-07$ & $0.18$& $5.69e-15$ & $4.82$\\
				$243$ & $-$ & $-$ & $1.46e-06$ & $0.19$ & $5.19e-07$ & $0.19$ & $8.29e-08$ & $0.19$ & $-$ & $-$\\
				$729$ &  $-$ & $-$  & $1.17e-06$ & $0.20$ & $4.17e-07$ & $0.20$ & $6.65e-08$ & $0.20$ & $-$ & $-$\\
				$2187$ &  $-$ & $-$ & $9.27e-07$ & $0.21$ & $3.31e-07$ & $0.21$ & $5.28e-08$ & $0.21$ & $-$ & $-$\\ 
				\hline\hline
		\end{tabular}}
        \vspace{-2mm}
		\caption{Convergence of the proposed integration scheme (varying patch approach) to the approximation of integral operator (\ref{conv_eqn}) for the kernel $g_{\alpha}(x-y) = |x-y|^{-\alpha}$, $\alpha=0.75$ with  $u(y) = y^{4}|y|+y+1$.}
\label{Table_alpha0.75_and_m4_patches}
	\end{table}
\end{example}
\section{Applications to surface scattering problem}\label{sec:scattering}
The scattered wave $u^{s}$ which arises in the acoustic scattering by sound soft obstacle $\Omega$, is given by the unique solution of the following PDE. 
\begin{equation}\label{Surf_Helmholtz_2d}
	\left\{
	\begin{array}{ll}
		\Delta u^{s}(\x) + \kappa^{2}u^{s}(\x) &= 0,\quad \x \in \R^{2} \setminus \Omega, \\
		u^{s}(\x) &= -u^{i}(\x), \quad \x \in \partial\Omega,\\
		\lim\limits_{r\rightarrow \infty} \sqrt{r}\left( \frac{\partial u^{s}}{\partial r} - i\kappa u^{s}\right) &= 0 ,
	\end{array}
	\right.
\end{equation}
where $u^{i}$ is the incident wave, $\kappa$ is the wave number, $\Omega\subset \R^{2}$ is a bounded domain, $\partial\Omega$ the boundary of $\Omega$, $r=\|\x\|$ and $i=\sqrt{-1}$. The obstacle $\Omega$ is of class $C^{2}$ and the fact, scattered wave satisfies the Sommerfeld radiation condition guarantee the existence of the unique solution to the above problem (\ref{Surf_Helmholtz_2d}) whenever the wavenumber $\kappa$ is positive \cite{colton2013integral}.
An equivalent integral equation formulation of the above exterior Helmholtz problem in terms of the acoustic single and double layer potentials is given by
\begin{equation}\label{Surf_us_exterior}
	u^{s}(\x) = \int\limits_{\partial\Omega} \left[ \frac{\partial g_{\kappa}(\x,\y)}{\partial \nu(\y)} - i\eta g_{\kappa}(\x,\y) \right] \varphi(y), \quad \x \in \R^{2} \setminus \Omega,
\end{equation}
where the density $\varphi$ is the solution of the following integral equation of the second kind
\begin{equation}\label{Surf_density_boundary}
	\frac{\varphi}{2}+(K\varphi)(\x) - i \eta (S\varphi)(\x) = u^{i}(\x),\quad \x \in \partial \Omega,
\end{equation}
and $(S\varphi)(\x)$,$(K\varphi)(\x)$ are the acoustic single and double layer potentials defined as follows.
\begin{equation*}
	(S\varphi)(\x) = \int\limits_{\partial\Omega} g_{\kappa}(\x,\y) \varphi(\y)ds(\y), \text{ and }
	(K\varphi)(\x) = \int\limits_{\partial\Omega} \frac{\partial g_{\kappa}(\x,\y)}{\partial \nu(\y)} \varphi(\y)ds(\y),
\end{equation*}
where $g_{\kappa}(\x,\y) = \frac{i}{4}H_{0}^{1}(\kappa|\x-\y|)$ is the Green function for the free space Helmholtz problem (\ref{Surf_Helmholtz_2d}), $\nu$ is the unit normal vector directed to the exterior of $\Omega$, and $\eta$ is called the coupling constant.
We illustrate the numerical solution of the problem discussed in \eqref{Surf_Helmholtz_2d} through the following experiments. The numerical results presented in this section were produced by means of a C++ implementation of the algorithms described in \Cref{method} on a single core of an Intel i7-11390H processor.
\begin{example}
In this example, we compute the numerical solution of the impenetrable scattering problem \eqref{Surf_us_exterior}-\eqref{Surf_density_boundary} on a circular region by considering a plane wave incidence.
The method proposed in \Cref{method} can effectively solve large-scale problems without suffering from dispersion error. Numerical results for large wavenumbers are computed and presented in \Cref{Table_high_wavenum}. In this table, the acronym ``iter" denotes the number of GMRES iterations required to reach the GMRES tolerance $10^{-10}$, it also illustrates the dispersion-free nature, as the solver's accuracy does not deteriorate with the increase in wavenumber if the number of points per wavelength is maintained. 
\begin{table}[hbt!]
\centering
\scalebox{.8}{
\begin{tabular}{c| c| c| c| c| c| c|c}
\hline \hline
\multirow{2}{*}{$P \times n$} & 
\multirow{2}{*}{$\kappa$} & 
\multirow{2}{*}{Points per $\lambda$} & 
\multirow{2}{*}{$\epsilon_{N,\infty}$} & 
\multirow{2}{*}{iter} & 
\multicolumn{2}{c}{Time (in seconds)}\\
\cline{6-8} 
& & & & &
\multirow{1}{*}{per iter} &
\multirow{1}{*}{pre-comput} & 
\multirow{1}{*}{Total}  \\ 
\cline{1-7}
\hline
$8 \times 15$& 10 & 12 & 8.09e-07 & 16 & 0.0006 & 0.005 & 0.03 \\
$ 16 \times 15$  & 20 &  12 & 2.09e-07 & 21 & 0.001 & 0.02 & 0.08 \\
$ 32 \times 15$ & 40&  12 &  4.66e-08 & 26 & 0.004 & 0.06 & 0.24 \\
$ 64 \times 15 $ & 80 & 12 & 5.74e-08 & 30 & 0.01 & 0.24 & 0.79 \\
$128 \times 15$ & 160&  12& 6.58e-08 & 35 & 0.05 & 0.98 & 3 \\
$256 \times 15$ & 320& 12 & 6.86e-08 & 43 & 0.20 & 3.84 & 13 \\
$ 512 \times 15$ & 640 & 12 & 7.33e-08 & 52 & 0.74 & 15.07 & 57 \\
$ 1024 \times 15$ & 1280 & 12 &4.62e-08 & 64 & 2.90 & 60.08 & 257\\			
$2048 \times 15$ &	2560  & 12 &7.71e-08 & 79 &  - & - & -\\
\hline \hline
\end{tabular} } 
\caption{Performance of the solver for high wave numbers:  Eight-digit accuracy is maintained using 12 points per wavelength. The GMRES tolerance is set to be $10^{-10}$ for all the results.}
\label{Table_high_wavenum}
\end{table} 
\end{example}
\begin{example}
This experiment illustrates the high-order convergence of our numerical integration scheme for star-shaped boundary.
The parametrization of $\partial\Omega$ is 
\begin{equation}
c(t) = r(t)(\cos{t},\sin{t}),\quad r(t) = 1+0.3\cos{(5t)}, ~0\leq t \leq 2\pi.
\end{equation}
For the plane wave incidence with $\kappa = 12$, we compared our numerical solution with the reference solution computed at $21\times 21$ equispaced grids on $[-3,3]^2$. For computing the reference solution, consider $n=20$ on each of the $512$ patches. An iterative solver GMRES took $26$ iterations with $10^{-14}$ tolerance to compute the reference solution at finer grids. 
The high-order convergence in both the approaches, patches fixed and patches varying, is attained and demonstrated in \Cref{Table_Surf_Star1}, respectively.
In \Cref{fig:Star_scat}, we display the surface scattering by star-shaped boundary when the incoming plane wave impinges on the obstacle from the positive x-axis with $\kappa=40$. 
\begin{table}[hbt!]
\begin{minipage}{0.5\linewidth}
\centering
\scalebox{0.8}{
\begin{tabular}{c| c| c| c}
\hline \hline
\multirow{1}{*}{$P\times n$} & 
\multirow{1}{*}{$\epsilon_{N,\infty}$} & 
\multirow{1}{*}{noc} & 
\multirow{1}{*}{time(in seconds)} \\ 
\cline{1-4}
\hline 
$1\times 15$ & 1.77e+00 & - & -\\
$2\times 15$ & 1.00+e00 & 0.82 & 0.01\\
$4\times 15$ & 1.40e-01 & 2.84 & 0.02\\
$8\times 15$ & 1.36e-02 & 3.36 & 0.06\\
$16\times 15$ & 7.21e-04 & 4.24 & 0.14\\
$32\times 15$ & 8.46e-07 & 9.73 & 0.38\\
$64\times 15$ & 4.48e-09 & 7.56 & 1.24\\
$128\times 15$ & 4.27e-13 & 13.36 & 4.39\\
				\hline \hline
		\end{tabular} }
	\end{minipage}
	\begin{minipage}{0.5\linewidth}
\centering
\scalebox{0.8}{
\begin{tabular}{c| c| c| c}
\hline \hline
\multirow{1}{*}{$P\times n$} & 
\multirow{1}{*}{$\epsilon_{N,\infty}$} & 
\multirow{1}{*}{noc} & 
\multirow{1}{*}{time(in seconds)} \\ 
\cline{1-4}
\hline 
$5\times 4$ & 5.29e+00 & - & -\\
$5\times 8$ & 4.89e-01 & 3.44 & 0.01\\
$5\times 16$ & 4.77e-02 & 3.36 & 0.02\\
$5\times 32$ & 1.31e-03 & 5.18 & 0.07\\
$5\times 64$ & 1.49e-05 & 6.45 & 0.26\\
$5\times 128$ & 5.83e-10 & 14.65 & 1.18\\
$5\times 256$ & 1.99e-11 & 4.86 & 6.44\\
$5\times 512$ & 3.51e-12 & 2.51 & 39.92\\
				\hline \hline
		\end{tabular} } 
	\end{minipage}
	 \label{Table_Surf_Star2}
\caption{Illustration of high-order convergence of the proposed algorithm for the surface scattering by star-shaped scatterer with $\kappa = 12$.}
\label{Table_Surf_Star1}
\end{table} 
\begin{figure}[hbt!]
\begin{center}
\begin{subfigure}{0.49\linewidth}
\includegraphics[width=0.85\linewidth]{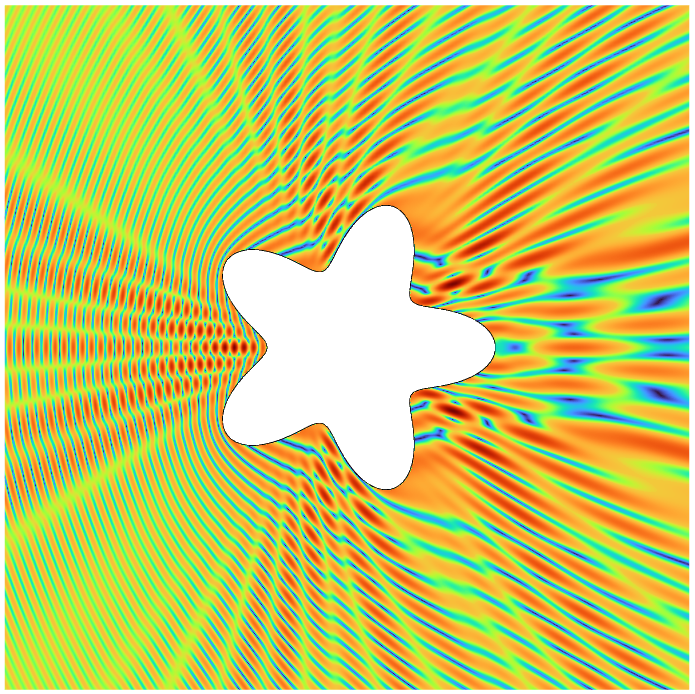} 
\caption{Absolute part of the total wave}
\label{fig:Star_Abs_TF}
\end{subfigure}
\begin{subfigure}{0.49\textwidth}
\includegraphics[width=0.85\linewidth]{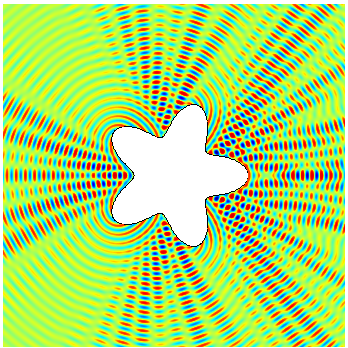}
\caption{Real part of the scattered wave}
\label{fig:Star_Real_SF}
\end{subfigure}
\caption{Illustration of the plane wave scattering by boundary of star shaped region for wavenumber $\kappa=48$.}
\vspace{-9mm}
\label{fig:Star_scat}
\end{center}
\end{figure}
\end{example}

\begin{example}
The parametrization of $\partial\Omega$ is 
$$c(t) = r(t)(\cos{t},\sin{t}),\quad r(t) = 1+0.3\cos{(4t+2\sin{t})}, ~0\leq t \leq 2\pi.$$
For the plane wave incidence with $\kappa = 10$, we compared our numerical solution with the reference solution computed at $21\times 21$ equispaced grids on $[-3,3]^2$. To compute the reference solution, we decomposed the jellyfish-shaped boundary into $P=512$ patches and considered $n=15$ on each patch. The GMRES took $23$ iterations with $10^{-13}$ tolerance for the computation of reference solution.
In \Cref{Table_Surf_JF1}, we display the high-order convergence in both the approaches, patches fixed and patches varying.
\Cref{fig:Jellyfish_scat}, shows the surface scattering by jellyfish-shaped boundary when the incoming plane wave impinges on the obstacle from the positive x-axis with $\kappa=50$. \
\begin{table}[hbt!]
	\begin{minipage}{0.5\linewidth}
\centering
\scalebox{0.8}{
						\begin{tabular}{c| c| c| c}
							\hline \hline
							\multirow{1}{*}{$P\times n$} & 
							\multirow{1}{*}{$\epsilon_{N,\infty}$} & 
							\multirow{1}{*}{noc} & 
							\multirow{1}{*}{time(in seconds)} \\ 
							\cline{1-4}
							\hline 
 $4\times 15$& 1.16e-01 & 2.20 & 0.02\\
$8\times 15$& 4.02e-03 & 4.85 & 0.06\\
 $16\times 15$& 1.77e-04 & 4.51 & 0.15\\
$32\times 15$ & 6.75e-08 & 11.4 & 0.41\\
 $64\times 15$& 2.14e-10 & 8.30 & 1.26\\
 $128\times 15$& 1.37e-13 & 10.6 & 4.28\\
 $256\times 15$& 4.70e-14 & 1.54 & 15.50\\
				\hline \hline
		\end{tabular} }
	\end{minipage}
	\begin{minipage}{0.5\linewidth}
			\centering
					\scalebox{0.8}{
						\begin{tabular}{c| c| c| c}
							\hline \hline
							\multirow{1}{*}{$P\times n$} & 
							\multirow{1}{*}{$\epsilon_{N,\infty}$} & 
							\multirow{1}{*}{noc} & 
							\multirow{1}{*}{time(in seconds)} \\ 
							\cline{1-4}
							\hline 
$10\times 4$ & 7.46 e-01 & - & -\\
$10\times 8$ & 5.63e-02 & 3.73 & 0.02\\
$10\times 16$ & 1.24e-03 & 5.50 & 0.05\\
$10\times 32$ & 2.43e-06 & 9.00 & 0.17\\
$10\times 64$ & 2.34e-10 & 13.3 & 0.76\\
$10\times 128$ & 1.37e-13 & 10.7 & 3.61\\
$10\times 256$ & 9.63e-14 & 0.51 & 20.8\\
				\hline \hline
		\end{tabular} } 
	\end{minipage}
	\caption{Illustration of high-order convergence of the proposed algorithm for the surface scattering by jellyfish-shaped scatterer with $\kappa = 10$.}
    \label{Table_Surf_JF1}
\end{table} 
\begin{figure}[H]
		\begin{subfigure}{0.49\textwidth}
			\includegraphics[width=0.85\linewidth]{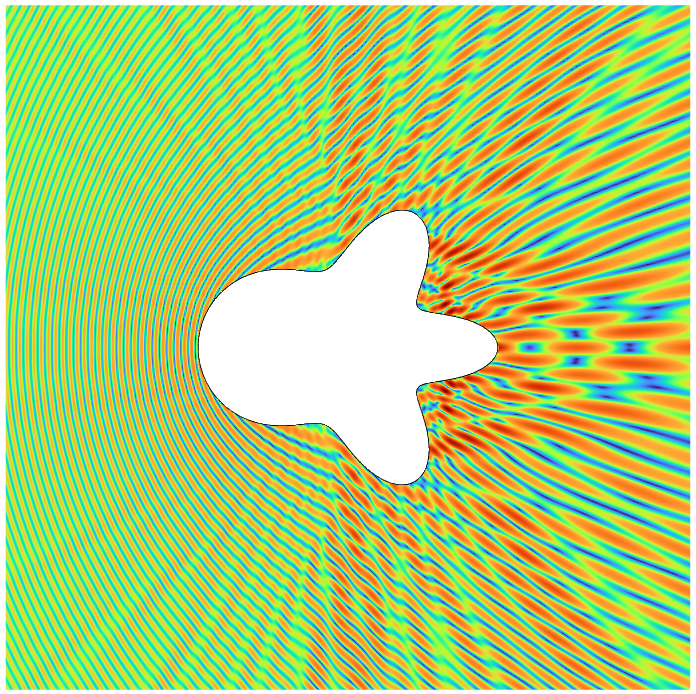} 
			\caption{Absolute value of the total wave}
			\label{fig:JF_absTotal}
		\end{subfigure}
		\begin{subfigure}{0.49\textwidth}
			\includegraphics[width=0.85\linewidth]{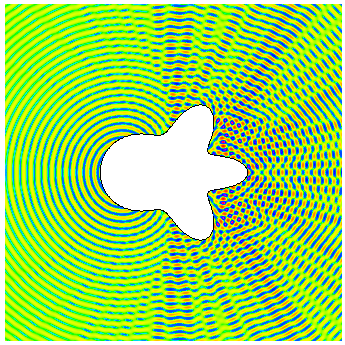}
			\caption{Real part of the scattered wave}
		\label{fig:JF_ReScat}
		\end{subfigure}
		\caption{Illustration of the plane wave scattering by boundary of jellyfish shaped region for wavenumber $\kappa=50$.}
        \vspace{-2mm}
		\label{fig:Jellyfish_scat}
\end{figure}    
\end{example}
\section{Conclusions}\label{conclusion}

We have presented an efficient high-order integration scheme for evaluating singular integral in one dimension. The proposed method is very accurate and can achieve machine accuracy with fewer integration nodes. We have established a complete quadrature analysis of the proposed integration scheme and verified the theoretical convergence rate through several computational results. The novel aspects of this contribution are the classification of all degrees of PCV (used in the analytical resolution of kernel singularity)  that can provide high-order convergence of the method and establishing a decay estimate on the continuous Chebyshev coefficients of integral density concerning its degree of smoothness and length of the domain. As an application of the method, we have employed the proposed scheme in conjunction with matrix-free iterative solver GMRES to solve impenetrable wave scattering by acoustic waves. 
In scattering simulations, the method's performance for large-scale problems is demonstrated through a variety of numerical results, and high-order convergence is obtained for a several complex domains.
\section{Acknowledgements}
Krishna acknowledges the financial support from CSIR throgh the file no. 09/1020(0183)/2019-EMR-I.
\bibliographystyle{abbrv} 
\bibliography{References.bib} 
\end{document}